\numberwithin{equation}{section}
\numberwithin{figure}{section}
\numberwithin{table}{section}
\numberwithin{remark}{section}
\numberwithin{proposition}{section}
\numberwithin{lemma}{section}
\numberwithin{theorem}{section}
\newcommand{\R}{\mathbb{R}}
\newcommand{\N}{\mathbb{N}}
\newcommand{\foralls}{\forall\,}
\newcommand{\dx}{\,\mathrm{d}x}
\DeclareMathOperator{\Div}{div}
\DeclareMathOperator{\Kern}{ker}
\DeclareMathOperator{\diam}{diam}
\newcommand{\mesh}{\mathcal{T}}
\newcommand{\jump}[1]{[#1]}
\newcommand{\apriori}{\emph{a~priori}}
\newcommand{\OmegaO}{\Omega_{_\mathrm{O}}}
\newcommand{\bfu}{\boldsymbol{u}}
\newcommand{\bff}{\boldsymbol{f}}
\newcommand{\bfg}{\boldsymbol{g}}
\newcommand{\bfv}{\boldsymbol{v}}
\newcommand{\bfn}{\boldsymbol{n}}
\newcommand{\Oast}{\Omega^{\ast}}
\newcommand{\meshast}{\mathcal{T}^{\ast}}
\newcommand{\tnast}{\tn_{\ast}}
\newcommand{\mcT}{\mathcal{T}}
\newcommand{\mcV}{\mathcal{V}}
\newcommand{\mcE}{\mathcal{E}}
\newcommand{\mcA}{\mathcal{A}}
\newcommand{\tn}{|\mspace{-1mu}|\mspace{-1mu}|}
\newcommand{\ndot}{\nabla \cdot}
\newcommand{\bfw}{\boldsymbol{w}}
\newcommand{\nablan}{\partial_{\bfn}}
\newcommand{\OmcupOm}{\Omega_1 \cup \Omega_2}
\newcommand{\mcupm}{{\mathcal{T}^{\ast}_1} \cup \mesh_2}
\newcommand{\meanvalue}[1]{\langle #1 \rangle}
\newcommand{\ifnormalpha}[1]{\ifnorm{#1}{\alpha}}
\newcommand{\ifnorm}[2]{\| #1 \|_{#2,h,\Gamma}}
\newcommand{\bfphi}{\boldsymbol{\phi}}
\newcommand{\picorr}{\pi^c}
\begin{document}

\title{\bf A stabilized Nitsche overlapping mesh method for the Stokes problem}
\author{Andr\'e Massing \and
        Mats G.\ Larson \and
      Anders Logg \and
        Marie E.\ Rognes}
\institute{Andr\'e Massing 
          \at 
          Simula Research Laboratory, Oslo, Norway \\
          Tel.: +47 46 95 74 01\\
          Fax:  +47 67 82 82 01\\
          \email{massing@simula.no}           
          \and
          Mats G.\ Larson \at
          Department of Mathematics, Ume{\aa} University, Ume{\aa}, Sweden
          \and
          Anders Logg 
          \at 
          Simula Research Laboratory, Oslo, Norway 
          \and
          Marie E.\ Rognes
          \at 
          Simula Research Laboratory, Oslo, Norway
        }

\date{Received: \today / Accepted: }
\maketitle

\begin{abstract}
  We develop a Nitsche-based formulation for a general class of
  stabilized finite element methods for the Stokes problem posed on a
  pair of overlapping, non-matching meshes. By extending the
  least-squares stabilization to the overlap region, we prove that the
  method is stable, consistent, and optimally convergent. To avoid an
  ill-conditioned linear algebra system, the scheme is augmented by a
  least-squares term measuring the discontinuity of the solution in
  the overlap region of the two meshes. As a consequence, we may prove
  an estimate for the condition number of the resulting stiffness
  matrix that is independent of the location of the
  interface. Finally, we present numerical examples in three spatial
  dimensions illustrating and confirming the theoretical results.
\keywords{Fictitious domain \and Stokes problem \and stabilized finite element
methods \and Nitsche's method}
 \subclass{MSC 65N12 \and MSC 65N30 \and MSC 76D07}
\end{abstract}

\section{Introduction}

Overlapping mesh methods offer many advantages over standard finite
element methods that require the generation of a single conforming
mesh resolving the full computational domain. With overlapping mesh
methods, the computational domain may instead be described by a set of
overlapping and non-matching meshes. In particular, different
subdomains may be meshed independently and then collected to form the
full domain. This feature is particularly useful in engineering
applications where meshes for physical components may be reused in
different configurations. Another important example is the simulation
of the flow around a complex object embedded in a channel. One may
then create a mesh that discretizes a fixed and simple domain such as
a cube or a sphere surrounding the complex object. This mesh may then
be imposed on top of a fixed background mesh for the simulation of the
flow around the object inserted at different locations in a domain of
interest. A particular advantage of this approach is that it allows
the creation of a fixed graded mesh to resolve boundary layers close
to the surface of the complex object. This is illustrated in
Figure~\ref{p3:fig:stokes_overlapping} for a simple two-dimensional
airfoil embedded in a channel.

In this work, we introduce an overlapping mesh method for Stokes flow
with constant viscosity across the artificial mesh interface.  The
Stokes problem reads: find the velocity
$\bfu: \Omega \subset \R^d \rightarrow
\R^d$ and the pressure $p:\Omega \to \R$ such that
\begin{subequations} \label{p3:eq:strongform}
  \begin{alignat}{3}
    - \Delta \bfu + \nabla p &= \bff & \quad & \text{in $\Omega$},
    \label{p3:eq:strong-stress}
    \\
    \nabla \cdot \bfu &=0 & \quad & \text{in $\Omega$},
    \label{p3:eq:strong-divergence}
    \\
    \bfu&=\bfg & \quad & \text{on $\partial \Omega$},
    \label{p3:eq:strong-dirichlet}
  \end{alignat}
\end{subequations}
where $\Omega$ denotes a bounded domain in $\R^d$, $d=2$ or $3$, with
Lipschitz boundary $\partial \Omega$, and where $\bff\in L^2(\Omega)$
and $\bfg \in H^{1/2}(\partial \Omega)$ are given functions. To
satisfy~\eqref{p3:eq:strong-divergence}, we assume that the mean value
of $\bfg \cdot \bfn$ vanishes; $\bfn$ denoting the outward unit normal
to $\partial \Omega$.

\begin{figure}
  \begin{center}
    \includegraphics[width=0.5\textwidth]{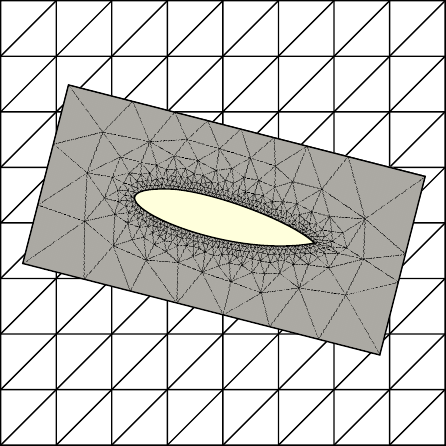}
    \caption{The stabilized Nitsche overlapping mesh method
      presented in this work allows the simulation of Stokes flow
      around a complex object (here a simple two-dimensional airfoil)
      described by a matching mesh of its surroundings imposed on top
    of a non-matching fixed background mesh.}
    \label{p3:fig:stokes_overlapping}
  \end{center}
\end{figure}

Our formulation is based on a general stabilized Galerkin finite
element method for the Stokes problem and enforcement of the interface
conditions via Nitsche's method. In order to prove stability, we let
the least-squares stabilization terms extend to all elements that
intersect the computational domain. As a result, these terms will
appear twice in any overlap regions. In addition, we include a certain
least-squares term penalizing the difference between the velocity
solutions in the overlap region. This allows us to prove stability and
optimal order error estimates as well as to control the condition
number of the resulting algebraic problem.

The method proposed here can be viewed as an extension to the Stokes
problem of earlier work by~\citet{HansboHansboLarson2003} who
developed a Nitsche overlapping mesh method for a second order
elliptic model problem. Also,~\citet{BeckerBurmanHansbo2009} presented
a Nitsche extended finite element method for incompressible elasticity
based on low-order ($[P_1^c]^d \times P_0$) elements. Moreover, the
least-squares penalty of the velocity differences is related to the
mesh tying approach proposed by~\citet{DayBochev2008}, who formulate a
least-squares problem for a system consisting of the partial
differential equation together with the interface conditions. Note
that in our method, the interface conditions are enforced using
Nitsche's method, while the least-squares terms on the overlap are
only included to prove the stability of the method and to control the
condition number.  In a related work~\citep{Massing2012c}, we present
a stabilized Nitsche fictitious domain method for the Stokes problem.

The implementation of the overlapping mesh method in three space
dimensions is a challenging problem. A realization of the method
proposed in this work entails computing the intersection of
arbitrarily superimposed tetrahedral meshes and integration over
arbitrarily cut tetrahedra. Such a realization has been developed as
part of the C++ library \rm{DOLFIN-OLM}
(\url{http://launchpad.net/dolfin-olm}) extending the FEniCS Project
software~\citep{LoggMardalEtAl2011,LoggWells2010a,Logg2007,KirbyLogg2006,LoggOlgaardEtAl2012a,Alnaes2011a,AlnaesLoggEtAl2009a}. For
a discussion of the computational aspects, we refer to our previous
work~\citep{Massing2012a} and the related paper~\citep{Massing2012c}.

The remainder of this work is organized as follows. We first summarize
our assumptions and notation in Section~\ref{p3:sec:preliminaries}.
The overlapping mesh method is then formulated in
Section~\ref{p3:sec:stokes-olm-method}.
Sections~\ref{p3:sec:approx-est}--\ref{p3:sec:a-priori} are devoted to
the stability and \apriori{} error analysis of the proposed method,
while the condition number estimate is presented in
Section~\ref{p3:sec:condition-number}. Finally, we demonstrate the
proposed method for a sample application in
Section~\ref{p3:sec:num-examples}, and present numerical convergence
results and condition number estimates to support our theoretical
results.

\section{Preliminaries}
\label{p3:sec:preliminaries}

In this section, we review the notation used throughout the remainder
of this work. We also summarize a standard stabilized Stokes
formulation to lay the foundations for the formulation of the
overlapping mesh method in Section~\ref{p3:sec:stokes-olm-method}.

\subsection{Finite element spaces}
\label{p3:ssec:notation}

In what follows, $H^s(\Omega)$ denotes the standard Sobolev space of
order $s \in \N$, defined on an open and bound domain $\Omega$ with
Lipschitz boundary $\partial \Omega$. We write
$(\cdot,\cdot)_{s,\Omega}$, $ \| \cdot \|_{s,\Omega}$ and $| \cdot
|_{s,\Omega}$ for the inner product, norm and semi-norm on
$H^s(\Omega)$, respectively. The index $s$ will be dropped when $s =
0$.

For a given, shape-regular tessellation $\mathcal{T}$ of $\Omega$, we
let the associated discrete velocity space $V_h \subset
[H^1(\Omega)]^d$ be the space of continuous piecewise polynomial
$\R^d$-valued vector fields of order $k$, and let the pressure space
$Q_h \subset L^2(\Omega) \cap C^0(\Omega)$ consist of continuous
piecewise polynomials of order $l$.  To emphasis the order of the
underlying polynomials, we occasionally write $V_h^k$ and $Q_h^l$.

\subsection{Stabilized Stokes elements}
\label{p3:ssec:stabilized-stokes-fem}

We recall the definition of consistently stabilized finite element
methods for the Stokes problem,
following~\citet{Franca1993,Barth2004}. We first define the bilinear
and linear forms
\begin{align}
  a_h(\bfu_h,\bfv_h) &= (\nabla \bfu_h , \nabla{\bfv_h})_{\Omega},
  \\
  b_h(\bfv_h,p_h) &=  - (\ndot \bfv_h, p_h)_{\Omega},
  \\
  l_h(\bfv_h) &= (\bff,\bfv_h)_{\Omega}.
\end{align}
As categorized by \citet{Barth2004}, a wide class of consistently
stabilized mixed finite element formulation for the Stokes problem can
be recast in the following form: find $(\bfu_h, p_h) \in V_h \times
Q_h$ such that
\begin{equation*}
  A_h(\bfu_h,p_h;\bfv_h,q_h) = L_h(\bfv_h,q_h)
  \quad \foralls (\bfv_h, q_h) \in V_h \times Q_h,
\end{equation*}
where
\begin{align*}
  A_h(\bfu_h,p_h;\bfv_h,q_h) &=
  a_h(\bfu_h,\bfv_h) +
  b_h(\bfv_h,p_h)  +
  b_h(\bfu_h,q_h)  -
  S_h(\bfu_h,p_h;\bfv_h,q_h),
  \\
  L_h(\bfv,q) &= l_h(\bfv)
  -\delta \sum_{T\in \mcT}
  h_T^2 (\bff,-\alpha\Delta\bfv_h + \beta\nabla q_h)_T,
\end{align*}
and where the stabilization is given by
\begin{align*}
  S_h(\bfu_h,p_h;\bfv_h,q_h)
  &= \delta \sum_{T \in \mcT} h_T^2
  (-\Delta \bfu_h + \nabla p_h,-\alpha \Delta \bfv_h + \beta \nabla
  q_h)_T.
\end{align*}
The parameters $\alpha$ and $\beta$ are chosen from the sets
$\{-1,0,1\}$ and $\{-1,1\}$, respectively and $\delta$ denotes some
positive constant. \citet{Barth2004} point out that the choice
$(\alpha,\beta) = (1,1)$ corresponds to the classical scheme
introduced by~\citet{HughesFrancaEtAl1986}, while on the other hand,
the method by~\citet{Douglas1989} can be constructed from the
parameter choice $(\alpha,\beta) = (-1,1)$.  In what follows, we will
focus on these two families of stabilized methods for the Stokes
problem.

\subsection{A domain decomposition model problem for the Stokes problem}
\label{p3:ssec:dd-model-problem}

Let $\Omega = (\overline{\Omega_1 \cup \Omega_2})^{\circ}$ be a domain
in $\R^d$ with Lipschitz boundary $\partial \Omega$, consisting of two
(open and bounded) disjoint subdomains $\Omega_1$ and $\Omega_2$
separated by the interface $\Gamma = \partial \overline{\Omega}_1 \cap
\partial \overline{\Omega}_2$.  To develop a Nitsche based overlapping
mesh method for the Stokes problem, we consider the following domain
decomposition model problem for \eqref{p3:eq:strongform}: find $\bfu:
\Omega \rightarrow \R^3$ and $p: \Omega \rightarrow \R$ such that
\begin{alignat}{3}
  -\Delta \bfu_i  + \nabla p_i  &= \bff_i &\quad &\text{in }  \Omega_i,
  \quad i = 1,2,
  \label{p3:eq:strong_stokes_equation_stress_form}
  \\
  \label{p3:eq:strong_stokes_equation_incompress_cond}
  \ndot \bfu_i &= 0 &\quad &\text{in } \Omega_i, \quad i = 1,2,
  \\
  \jump{\bfu} &= 0 &\quad &\text{on } \Gamma,
  \label{p3:eq:strong_cont_condition}
  \\
  \jump{\nablan \bfu - p \bfn} &= 0 &\quad &\text{on } \Gamma,
  \label{p3:eq:strong_flux_condition}
  \\
  \bfu &= 0 &\quad &\text{on } \partial \Omega.
  \label{p3:eq:dirichlet_bc}
\end{alignat}
Here and in the following, $v_i = v|_{\Omega_i}$ denotes the
restriction of a function or vector field $v$ to a subdomain
$\Omega_i$. Furthermore, $\bfn$ is the unit normal of $\Gamma$
directed from $\Omega_2$ into $\Omega_1$, $\nablan \bfu \equiv \bfn
\cdot \nabla \bfu$, and $\jump{v} = v_2 - v_1$ denotes the jump in a
function over the interface $\Gamma$.
\begin{figure}[htbp]
  \begin{center}
    \includegraphics[width=0.40\textwidth]{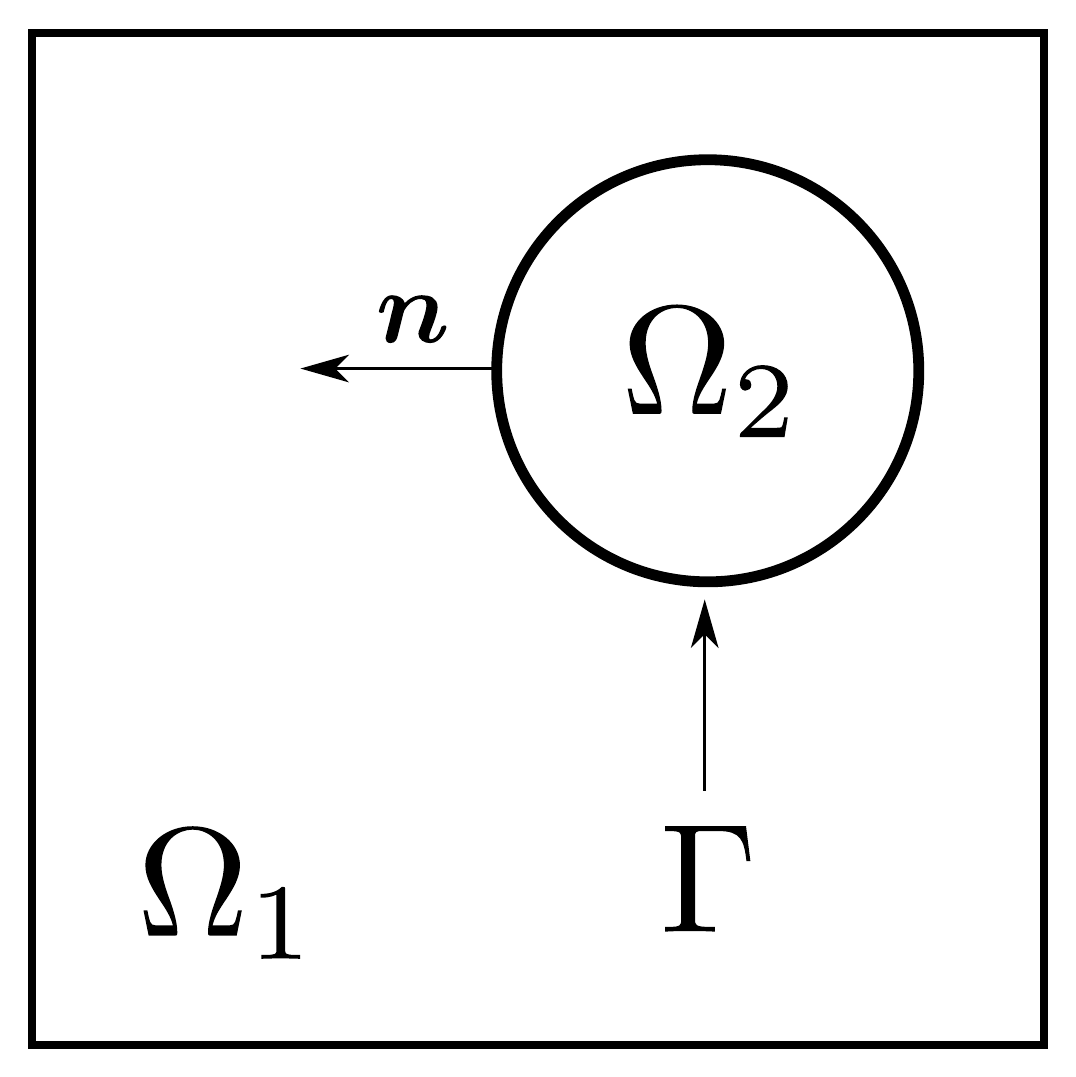}
    \caption{Decomposition of the domain $\Omega$ by introducing the
      artificial interface $\Gamma$. The weak coupling along the
      interface $\Gamma$ by the Nitsche method allows independent
      meshes for the subdomains $\Omega_1$ and $\Omega_2$, including
      overlapping meshes.}
    \label{p3:fig:model_domain}
  \end{center}
\end{figure}

The decomposition of $\Omega$ into the two
subdomains $\Omega_1$ and $\Omega_2$ motivates the introduction of the
broken Sobolev spaces
\begin{equation}
  H^s(\OmcupOm) = H^s(\Omega_1)
  \bigoplus H^s(\Omega_2),  \quad s\geqslant 0,
  \label{p3:eq:broken-space}
\end{equation}
endowed with the norm
\begin{equation*}
  \| v \|_{s,\OmcupOm}^2 = \| v_1 \|_{s, \Omega_1}^2 + \| v_2
  \|_{s,\Omega_2}^2.
\end{equation*}

The key idea in developing in Nitsche-type methods is to now replace
the strong continuity conditions~\eqref{p3:eq:strong_cont_condition}
and~\eqref{p3:eq:strong_flux_condition} by a weak formulation
\citep{Nitsche1971,HansboHermansson2003,HansboHansbo2002,HansboHansboLarson2003,Hansbo2005}.
This approach is analogous to that of discontinuous Galerkin methods
for elliptic equations~\citep{ArnoldBrezziCockburnEtAl2002}.  Starting
with suitable finite element discretizations of $H^s(\Omega_1)$ and
$H^s(\Omega_2)$, a weak formulation can be obtained by multiplying
with test functions, integrating by parts and adding certain
symmetrization and stabilization terms.  A typical example of the
resulting interface form will be given as part of the method we
present in Section~\ref{p3:sec:stokes-olm-method}.

We remark that the introduction of the interface $\Gamma$ is purely
artificial in our application case and solely serves the purpose of
decomposing the domain into suitable subdomains to ease and decouple
the subsequent meshing process.

\subsection{Overlapping meshes}
\label{p3:ssec:overlapping-meshes}

We consider a situation where a background mesh $\mesh_0$ is given for
$\Omega = (\overline{\Omega_1 \cup \Omega_2})^{\circ}$ and another
mesh $\mesh_2$ is given for the overlapping domain $\Omega_2$ (see
Figure~\ref{p3:fig:model_domain}). Both meshes are assumed to consist
of shape-regular simplices $T$.  We note that the tessellation
$\mesh_0$ of the background domain $\Omega$ may be decomposed into
three disjoint subsets:
\begin{equation}
  \label{p3:eq:mesh-splitting}
  \mesh_0 = \mesh_{0,1} \cup \mesh_{0,2} \cup \mesh_{0,\Gamma},
\end{equation}
where $\mesh_{0,1} = \{ T \in \mesh_0 : T \subset \overline{\Omega}_1
\} $, $\mesh_{0,2} = \{ T \in \mesh_0 : T \subset \overline{\Omega}_2
\} $ and $\mesh_{0,\Gamma} = \{ T \in \mesh_0 : | T \cap \Omega_i | >
0, \ i=1,2 \} $ denote the sets of \emph{not}, \emph{completely} and
\emph{partially} overlapped elements relative to $\Omega_2$,
respectively. The meshes $\meshast_1$ and $\mesh_1$ are then defined
by
\begin{align}
  \meshast_1 &= \mesh_{0,1} \cup \mesh_{0,\Gamma},
  \label{p3:eq:meshast-1}
  \\
  \mesh_1 &= \{ T \cap \overline{\Omega}_1 : T \in \meshast_1 \}.
  \label{p3:eq:mesh-1}
\end{align}
Moreover, we introduce the tessellated domain $\Oast_1 = \bigcup_{T
  \in \meshast_1} T$ and the overlap region $ \OmegaO = \Omega_2 \cap
\Oast_1$.  To ease the notation, we occasionally refer to $\mesh_2$ as
$\meshast_2$ and to $\Omega_2$ as $\Oast_2$.  See
Figure~\ref{p3:fig:overlapping_notation} for an illustration of the
various mesh parts and regions.  Furthermore, we introduce the
notation $\partial_e \meshast_j$ (the \emph{exterior facets}) and
$\partial_i \meshast_j$, $j=1,2$ (the \emph{interior facets}), for the
set of facets which belong to either one or two elements,
respectively. Here, a facet means an edge in $\R^2$ and face in
$\R^3$. In accordance with~\eqref{p3:eq:mesh-1}, the sets $\partial_i
\mesh_1$ and $\partial_i \mesh_1$ denote the corresponding set of
intersected facets $F \cap \overline{\Omega}_1$.
\begin{figure}
  \begin{center}
    \def\svgwidth{0.71\textwidth}
    \import{pdf/}{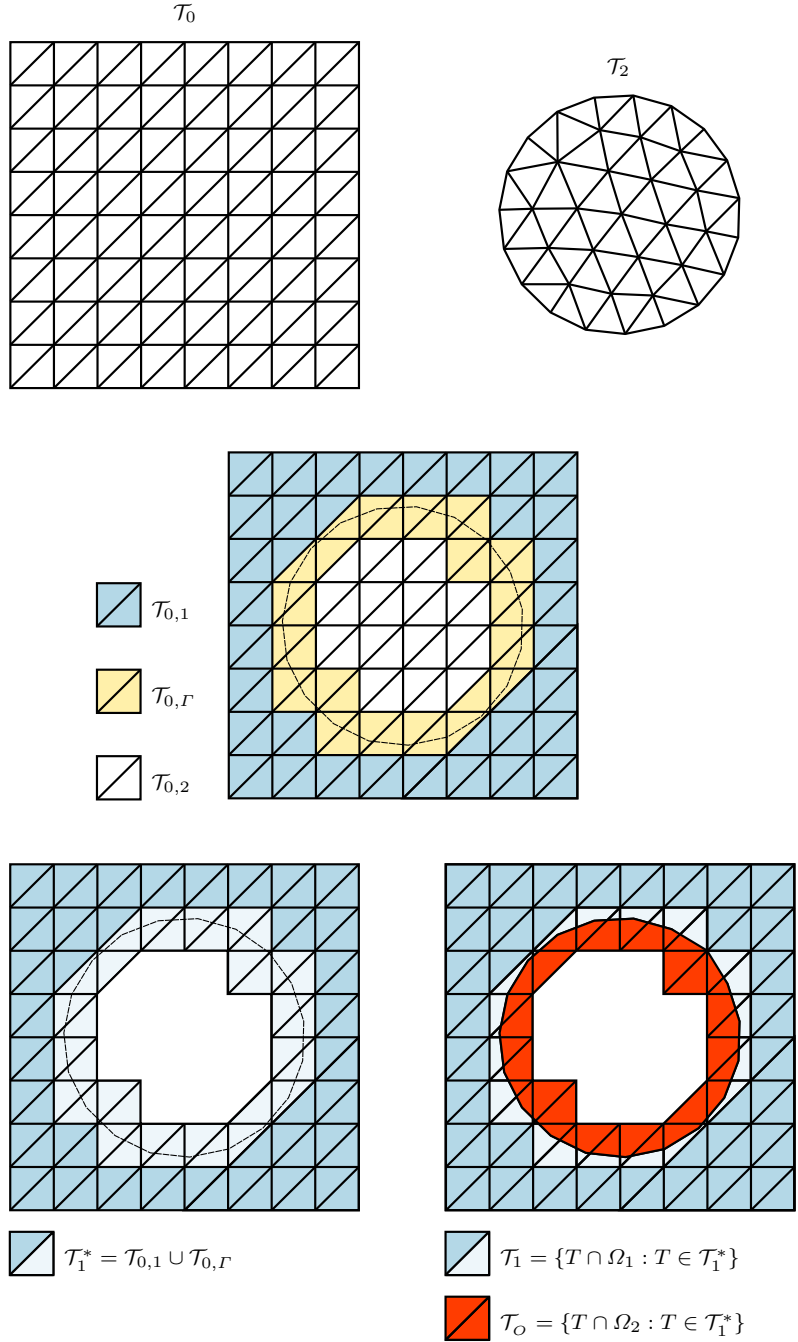}
    \caption{Summary of notation for overlapping meshes. The starting
      point is a background mesh $\mesh_0$ and an overlapping mesh
      $\mesh_2$. The background mesh $\mesh_0$ is then partitioned
      based on its intersection with the boundary $\Gamma$ of the
      overlapping domain.}
    \label{p3:fig:overlapping_notation}
  \end{center}
\end{figure}

In addition to the shape-regularity, we require that the mesh sizes
are compatible over the interface $\Gamma$. More precisely, we assume
that there exist a mesh-independent constant $C > 0$ such that for all
$T \in \mesh_0$ and all $\widetilde{T} \in \mesh_2$ it holds
\begin{equation}
  C^{-1} h_{T} \leqslant  h_{\widetilde{T}}  \leqslant C h_{T}
  \label{p3:eq:meshsize-combatibility}
\end{equation}
whenever $T \cap \widetilde{T} \cap \Gamma \neq \emptyset$.
Similar assumptions were made in \citet{HansboHansboLarson2003}.

Finally, we compose suitable finite element spaces for the overlapping
meshes. For each mesh $\meshast_i$, $i=1,2$, let $\mcV_{h,i}$ be a finite element
space of continuous fixed-order polynomials on $\meshast_i$.
Similar to the broken Sobolev space \eqref{p3:eq:broken-space},
we introduce the composite finite element space for the whole domain
$\Omega$
\begin{equation}
  \mcV_h = \mcV_{h,1} \bigoplus \mcV_{h,2},
  \label{p3:eq:broken-fem-space}
\end{equation}
and define the broken Sobolev space norm as before by
\begin{equation}
  \label{p3:eq:broken}
  \| v_h \|_{s,\mcupm}^2 = \| v_{h,1} \|_{s,\meshast_1}^2 + \| v_{h,2}
  \|_{s,\mesh_2}^2.
\end{equation}
for $v_h \in \mcV_h$. Note that both $v_{h,1}$ and $v_{h,2}$
contribute to the norm in the overlap region $\OmegaO$.  In
particular, we define the composite finite element spaces $V_h$ and
$Q_h$ for the velocity and pressure, respectively, by
\begin{equation}
  \label{p3:eq:elementspaces}
  V_h = V_{h,1}^k \bigoplus V_{h,2}^k, \quad
  Q_h = Q_{h,1}^l \bigoplus Q_{h,2}^l
\end{equation}

\section{A Nitsche overlapping mesh method for the Stokes problem}
\label{p3:sec:stokes-olm-method}
Given a domain $\Omega$, overlapping meshes $\meshast_1$ and $\mesh_2$
and the composite finite element spaces $V_h$ and $Q_h$ as introduced
in~\eqref{p3:eq:elementspaces}, we define the bilinear form $A_h$ by
\begin{equation}
  \begin{split}
  \label{p3:eq:Ah-olm}
  A_h(\bfu_h,p_h;\bfv_h,q_h)
  = a_h(\bfu_h,\bfv_h) +
  b_h(\bfv_h,p_h)  +
  b_h(\bfu_h,q_h)  \\
  + s_h(\bfu_h,\bfv_h)
  - S_h(\bfu_h,p_h;\bfv_h,q_h) ,
  \end{split}
\end{equation}
where the bilinear forms $a_h$, $b_h$, $s_h$ and $S_h$
are given by
\begin{align}
  \label{p3:eq:ah-olm}
  a_h(\bfu_h,\bfv_h)
  &=
  (\nabla \bfu_h , \nabla{\bfv_h})_{\Omega_1 \cup \Omega_2}
  - {
    (\meanvalue{\nablan \bfu_h},
    \jump{{\bfv_h}})_{\Gamma}
  }
  \\
  &\phantom{=}\;
  - (\meanvalue{ \nablan \bfv_h}, \jump{{\bfu_h}}  )_{\Gamma}
  + \gamma (h^{-1} \jump{{\bfu_h}}, \jump{{\bfv_h}} )_{\Gamma},
  \nonumber
  \\
  \label{p3:eq:bh-olm}
  b_h(\bfv_h,q_h)
  &=  - (\ndot \bfv_h, q_h)_{\Omega_1 \cup \Omega_2} +
  (\bfn \cdot \jump{\bfv_h},\meanvalue{q_h})_{\Gamma},
  \\
  \label{p3:eq:sh-olm}
  s_h(\bfu_h,\bfv_h)
  &=
  (\nabla(\bfu_{h,1} - \bfu_{h,2}), \nabla(\bfv_{h,1} - \bfv_{h,1}))_{\OmegaO},
  \\
  \label{p3:eq:Sh-olm}
  S_h(\bfu_h,p_h;\bfv_h,q_h)
  &= \delta \sum_{T \in \mcupm } h_T^2
  (-\Delta \bfu_h + \nabla p_h,-\alpha \Delta \bfv_h + \beta \nabla
  q_h)_T .
\end{align}
Here, $\meanvalue{v}$ denotes a weighted average $\meanvalue{v} =
\alpha_1 v_1 + \alpha v_2$ with $\alpha_1 + \alpha_2 =1$ for a field
$v$ defined on $\Omega$ and discontinuous across the interface
$\Gamma$.  In accordance with~\citet{HansboHansboLarson2003} and to
simplify the presentation, we choose $\meanvalue{v} = v_2$.  Finally,
the linear form $L_h$ is defined by
\begin{equation}
  L_h(\bfv,q) = (\bff,\bfv)
  -\delta \sum_{T\in \mcupm}
  h_T^2 (\bff,-\alpha\Delta\bfv_h + \beta\nabla q_h)_T.
  \label{p3:eq:Lh-olm}
\end{equation}
With these definitions, the Nitsche based overlapping mesh method for
the Stokes problem~\eqref{p3:eq:strongform} reads: find $(\bfu_h,p_h)
\in V_{h} \times Q_{h}$ such that
\begin{equation}
  \label{p3:eq:stokes-olm}
  A_h(\bfu_h, p_h ; \bfv_h, q_h) = L_h(\bfv_h, q_h) \quad \foralls
  (\bfv_h,q_h) \in V_h \times Q_h.
\end{equation}
\begin{remark}
  The forms $s_h$, $S_h$ and the mesh $\meshast_1$ are defined such
  that contributions from both meshes are taken into account in the
  overlap region $\OmegaO$. These twice-counted contributions allows
  us to obtain stability and condition number estimates; this will be
  discussed further in Section~\ref{p3:sec:stability-props}
  and~\ref{p3:sec:condition-number}.
\end{remark}
We conclude this section by stating the following Galerkin
orthogonality relation.
\begin{proposition}
  \label{p3:prop:galerkin-ortho}
  Let $(\bfu, p) \in [H^2(\Omega)]^d \times H^1(\Omega)$
  be a (weak) solution of~\eqref{p3:eq:strongform}
  and $(\bfu_h, p_h) \in V_h \times Q_h$
  the solution of the finite element
  formulation~\eqref{p3:eq:stokes-olm}. Then
  \begin{equation}
    A_h(\bfu - \bfu_h, p - p_h; \bfv_h, q_h)
    = 0 \quad \foralls (\bfv_h, q_h) \in  V_h \times Q_h.
    \label{p3:eq:galerkin-ortho}
  \end{equation}
\end{proposition}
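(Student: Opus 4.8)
The plan is to establish \emph{consistency} and then invoke linearity. Since $(\bfu_h,p_h)$ solves~\eqref{p3:eq:stokes-olm} and $A_h(\cdot,\cdot\,;\bfv_h,q_h)$ is linear in its first pair of arguments, the orthogonality~\eqref{p3:eq:galerkin-ortho} follows immediately once I show that the exact solution satisfies the same relation, namely $A_h(\bfu,p;\bfv_h,q_h)=L_h(\bfv_h,q_h)$ for all $(\bfv_h,q_h)\in V_h\times Q_h$. Here I read $(\bfu,p)$ on the overlapping meshes as the single-valued restriction of the global solution to $\Oast_1$ and to $\Omega_2$. Because the interface $\Gamma$ is purely artificial and $(\bfu,p)\in[H^2(\Omega)]^d\times H^1(\Omega)$ globally, the relevant traces are continuous across $\Gamma$, so that $\jump{\bfu}=0$, $\jump{\nablan\bfu}=0$ and $\jump{p}=0$ on $\Gamma$, while $\bfu_1=\bfu_2$ on the overlap $\OmegaO$.

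These identities eliminate several terms at once. The symmetrization and penalty contributions in $a_h$ carrying a factor $\jump{\bfu}$ vanish, as does the interface term of $b_h(\bfu,q_h)$; the bulk part of $b_h(\bfu,q_h)$ vanishes by incompressibility~\eqref{p3:eq:strong_stokes_equation_incompress_cond}, and $s_h(\bfu,\bfv_h)=0$ since $\bfu_1-\bfu_2\equiv0$ on $\OmegaO$. What survives is
\[
  A_h(\bfu,p;\bfv_h,q_h)=(\nabla\bfu,\nabla\bfv_h)_{\OmcupOm}-(\ndot\bfv_h,p)_{\OmcupOm}-(\meanvalue{\nablan\bfu},\jump{\bfv_h})_{\Gamma}+(\bfn\cdot\jump{\bfv_h},\meanvalue{p})_{\Gamma}-S_h(\bfu,p;\bfv_h,q_h).
\]

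Next I integrate by parts subdomain-wise. On each $\Omega_i$, Green's formula turns $(\nabla\bfu,\nabla\bfv_h)_{\Omega_i}-(\ndot\bfv_h,p)_{\Omega_i}$ into the volume term $(-\Delta\bfu+\nabla p,\bfv_h)_{\Omega_i}$ plus boundary integrals over $\partial\Omega_i$, which splits into an exterior part on $\partial\Omega$ and the interface $\Gamma$. The exterior contributions vanish because the outer Dirichlet condition is essential, so $\bfv_h=0$ there. Using that the outward normals of $\Omega_1$ and $\Omega_2$ on $\Gamma$ are $-\bfn$ and $+\bfn$, the $\Gamma$-integrals collapse to $(\nablan\bfu,\jump{\bfv_h})_{\Gamma}-(p,\bfn\cdot\jump{\bfv_h})_{\Gamma}$; by single-valuedness $\meanvalue{\nablan\bfu}=\nablan\bfu$ and $\meanvalue{p}=p$, these cancel exactly against the two surviving Nitsche terms. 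The leftover volume term is $\sum_i(-\Delta\bfu+\nabla p,\bfv_h)_{\Omega_i}=(\bff,\bfv_h)_{\OmcupOm}$ by the momentum equation~\eqref{p3:eq:strong_stokes_equation_stress_form}.

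Finally, the same equation rewrites the stabilization as $S_h(\bfu,p;\bfv_h,q_h)=\delta\sum_{T\in\mcupm}h_T^2(\bff,-\alpha\Delta\bfv_h+\beta\nabla q_h)_T$, which matches term-by-term the stabilization part of $L_h$ in~\eqref{p3:eq:Lh-olm}, both summing over the doubly-counted set $\mcupm$. Hence $A_h(\bfu,p;\bfv_h,q_h)=L_h(\bfv_h,q_h)$, and subtracting the discrete equation proves the claim. I expect the only delicate point to be the bookkeeping at $\Gamma$: tracking the two opposite outward normals together with the weighted-average convention $\meanvalue{v}=v_2$ carefully enough to see that the integration-by-parts boundary terms and the Nitsche symmetrization terms cancel down to the last sign.
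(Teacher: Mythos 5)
Your proof is correct and follows essentially the same route as the paper's own (much terser) proof: consistency via subdomain-wise integration by parts, the continuity conditions across $\Gamma$, the vanishing of $s_h(\bfu,\cdot)$ on the overlap, and then bilinearity together with the discrete equation. In fact your handling of the stabilization term is more precise than the paper's, which asserts $S_h(\bfu,p;\bfv_h,q_h) \equiv 0$ — literally false, since $-\Delta\bfu + \nabla p = \bff \neq 0$ in general; the correct observation, which you make, is that $S_h(\bfu,p;\bfv_h,q_h)$ equals the $\bff$-dependent stabilization part of $L_h$ and cancels against it, both sums running over the same doubly-counted element set $\mcupm$.
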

\begin{proof}
  The proof is standard and follows from integration by parts, the
  continuity conditions~\eqref{p3:eq:strong_cont_condition} --
  \eqref{p3:eq:strong_flux_condition} and observing that $s_h(\bfu,
  \bfv_h) \equiv 0 \equiv S_h(\bfu,p; \bfv_h,q_h)$ for all $(\bfv_h,
  q_h) \in V_h \times Q_h$.
  \qed
\end{proof}

\section{Approximation properties}
\label{p3:sec:approx-est}

In this section, we establish the appropriate interpolation operators
and provide interpolation estimates for use in
Sections~\ref{p3:sec:stability-props}--\ref{p3:sec:a-priori} for the
\apriori{} error analysis of the method proposed in
Section~\ref{p3:sec:stokes-olm-method}.

With reference to Section~\ref{p3:ssec:overlapping-meshes}, $\mcV_h$
will here denote some composite finite element space $\mcV_h =
\mcV_{h,1} \bigoplus \mcV_{h,2}$ comprising piecewise polynomial
functions or vector fields defined on $\meshast_i$, $i = 1, 2$. In
particular, we shall let $\mcV_h = V_h$ or $\mcV_h = Q_h$ in the
subsequent sections. The constants $C$ involved in the inequalities
will depend only on $\Omega_i$, the regularity of the function spaces
considered, and possibly the shape-regularity of $\mesh_i$ and the
polynomial order of $\mcV_{h,i}$, $i = 1, 2$.

\subsection{Norms}
\label{p3:ssec:norms}
Given the domain $\Omega = \OmcupOm$ and corresponding overlapping
meshes $\meshast_1$ and $\mesh_2$, we introduce the following pairs of
mesh-dependent norms for $(\bfv,q) \in [H^2(\Omega)]^d \times
H^1(\Omega)$ and $(\bfv_h, q_h) \in V_h \times Q_h$
(see~\eqref{p3:eq:elementspaces} and~\eqref{p3:eq:broken}):
\begin{align}
  \label{p3:eq:triple-v-norm}
  \tn \bfv \tn^2 &= \| \nabla \bfv \|^2_{\Omega_1 \cup \Omega_2} +
  \ifnorm{\meanvalue{\nablan\bfv}}{-1/2}^2 +
  \ifnorm{\jump{\bfv}}{1/2}^2,
  \\
  \label{p3:eq:triple-v-normast}
  \tn \bfv \tnast^2 &= \| \nabla \bfv \|^2_{\mcupm} +
  \ifnorm{\meanvalue{\nablan\bfv}}{-1/2}^2 +
  \ifnorm{\jump{\bfv}}{1/2}^2,
  \\
  \label{p3:eq:triple-p-norm}
  \tn q \tn^2 &= \| q \|_{{\OmcupOm}}^2,
  \\
  \tn q \tnast^2 &= \| q \|_{{\mcupm}}^2,
  \label{p3:eq:triple-p-normast}
\end{align}
where
\begin{align}
  \ifnormalpha{\bfv}^2 &= \sum_{T \in \mesh_2} h_{T}^{-2\alpha} \| \bfv
  \|^2_{0,\Gamma \cap T} ,
  \label{p3:eq:alpha-norm}
\end{align}
and finally
\begin{align}
  \label{p3:eq:triple-up-norm}
  \tn (\bfv,q) \tn^2 &= \tn \bfv \tn^2 + \tn q \tn^2,
  \\
  \tn (\bfv,q) \tn^2_{\ast} &= \tn \bfv \tn^2_{\ast} +
  \|q\|^2_{\ast} .
  \label{p3:eq:triple-up-normast}
\end{align}
Note that $\tn \cdot \tn_{\ast}$-norms are defined on the regular
meshes $\meshast_1$ and $\mesh_2$ and therefore represent proper norms
for the discrete finite element functions, while $\tn \cdot \tn$ is
more suitable for functions defined on $ \Omega = \OmcupOm$ only.

\subsection{Trace inequalities and inverse estimates}
\label{p3:ssec:trace-inverse}
In the following, $\meshast$ denotes always one of the regular,
non-intersected meshes $\meshast_1$ and $\meshast_2$.  We first recall
the following trace inequalities for $v \in H^1(\Omega)$ (or $v \in
[H^1(\Omega)]^d$)~\citep{HansboHansboLarson2003}:
\begin{alignat}{1}
  \| v \|_{\partial T} &\leqslant  C (h_{T}^{-1/2} \| v \|_{T}
  + h_{T}^{1/2} \| \nabla v \|_{T} ) \quad \foralls T \in \meshast,
  \label{p3:eq:trace-inequality}
  \\
  \| v \|_{T \cap \Gamma} &\leqslant  C (h_{T}^{-1/2} \| v \|_{T}
  + h_{T}^{1/2} \| \nabla v \|_{T} ) \quad \foralls T \in \meshast.
  \label{p3:eq:trace-inequality-for-FD}
\end{alignat}
The following inverse estimates for $v \in \mcV_h = \mcV_h(\meshast)$
will also be frequently
used. See~\citep{Quarteroni2009,HansboHansboLarson2003,HansboHermansson2003})
for proofs.
\begin{alignat}{3}
  \label{p3:eq:inverse-estimates-for-triangles}
  h_T \| \nabla v_h \|_T &\leqslant C
  \| v_h \|_{T} &\quad &\foralls T \in \meshast,
  \\
  \label{p3:eq:inverse-estimate-for-facets}
  h_F^{1/2} \| \nablan v_h \|_{F} &\leqslant C \| \nabla v_h
  \|_{T} &\quad &\foralls T \in \meshast, \\
  \label{p3:eq:inverse-estimate-for-fd-traces}
  h_F^{1/2} \| \nablan v_h \|_{\Gamma \cap T} &\leqslant C \|
  \nabla v_h \|_{T}  &\quad &\foralls T \in \meshast.
\end{alignat}
We emphasize that $h_F$
in~\eqref{p3:eq:inverse-estimate-for-fd-traces} (as
in~\eqref{p3:eq:inverse-estimate-for-facets}) denotes the diameter of
the \emph{entire} facet $F$ as part of the mesh $\meshast$.  Finally,
the estimation of the stabilization terms~\eqref{p3:eq:Sh-olm} will
involve the inverse estimates
\begin{alignat}{3}
  \sum_{T\in\meshast} h_T^2 \| \nabla v_h \|^2 &\leqslant C \| v_h \|_{\meshast}^2 &\quad
  &\foralls v_h \in \mcV_h,
  \label{p3:eq:inverse-estimate-grad-l2}
  \\
  \sum_{T\in\meshast} h_T^2 \| \Delta v_h \|^2 &\leqslant C \| \nabla v_h \|_{\meshast}^2 &\quad
  &\foralls v_h \in \mcV_h.
  \label{p3:eq:inverse-estimate-delta-grad}
\end{alignat}

To the end of this section, we state a Poincar\'e type inequality for
finite element functions on the overlapped mesh domain $\meshast_1$.
To prove the proposition, we will need the following lemma, which is
stated and proved in~\citep{Massing2012c}:
\begin{lemma}
  \label{p3:lem:l2norm-control-via-jumps}
  Let $\mesh = \{T\}$ be a mesh consisting of shape-regular elements
  $T$ and take any two elements $T_1$ and $T_2$ sharing a face
  $F$. Furthermore, let $u$ be a piecewise polynomial function defined
  on the macro-element $\mathcal{M} = T_1 \cup T_2$. There exists a
  constant $C$ depending only on the mesh quality parameters and the
  maximal polynomial order $p$ of $u$ restricted to each of the
  elements, such that
  \begin{equation}
    \| u \|_{T_1}^2 \leqslant C ( \|u\|_{T_2}^2
    + \sum_{j \leqslant p} h_F^{2 j + 1}
    (\jump{\nablan^{j} u},\jump{\nablan^{j} u})_F).
    \label{p3:eq:l2norm-control-via-jumps}
  \end{equation}
\end{lemma}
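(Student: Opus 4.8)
The plan is to write $u_1 := u|_{T_1}$ as the polynomial extension of $u_2 := u|_{T_2}$ plus a correction whose Cauchy data on $F$ are exactly the jumps on the right-hand side. Since $u_2$ is a polynomial of degree $\leqslant p$ on $T_2$, it extends to a polynomial $\widetilde u_2$ on all of $\R^d$; set $w = u_1 - \widetilde u_2$, again a polynomial of degree $\leqslant p$ on $T_1$. Then $\|u_1\|_{T_1} \leqslant \|\widetilde u_2\|_{T_1} + \|w\|_{T_1}$, and it suffices to bound the first term by $\|u_2\|_{T_2}$ and the second by the weighted jump terms. Throughout I would rescale the macro-element $\mathcal{M} = T_1 \cup T_2$ to a reference configuration of unit diameter, so that the geometric estimates become norm equivalences on fixed finite-dimensional polynomial spaces.

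For the first term, I would exploit that $\widetilde u_2$ is a \emph{single} polynomial evaluated over the two face-sharing, shape-regular simplices $T_1$ and $T_2$. On the rescaled configuration, the estimate $\|\widetilde u_2\|_{T_1} \leqslant C\|u_2\|_{T_2}$ reduces to the equivalence of the $L^2(\widehat T_1)$ and $L^2(\widehat T_2)$ norms of a fixed polynomial, which holds on any \emph{fixed} geometry by equivalence of norms on a finite-dimensional space, and holds \emph{uniformly} because the admissible reference configurations (two shape-regular simplices sharing a face) form a compact family. This yields $\|\widetilde u_2\|_{T_1} \leqslant C\|u\|_{T_2}$ with $C = C(p,\text{shape-regularity})$.

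For the correction $w$, the key observation is that $\widetilde u_2$ extends $u_2$ across $F$ as the same polynomial, so its one-sided traces from $T_1$ coincide with those of $u_2$ from $T_2$; hence $\nablan^{j} w|_F = \nablan^j u_1|_F - \nablan^j u_2|_F = -\jump{\nablan^j u}$ for each $j$. Choosing coordinates $(y,s)$ with $F \subset \{s=0\}$ and $s$ the normal direction, I would expand $w(y,s) = \sum_{j=0}^{p} g_j(y)\,s^{j}$ with $g_j = \tfrac{1}{j!}\nablan^j w|_{s=0}$, so that the $g_j|_F$ are, up to factorials, precisely the jumps. After rescaling by $h_F$, $w$ becomes $\widehat w = \sum_j \widehat G_j\,\widehat s^{\,j}$ with $\widehat G_j = h_F^{j}\,\widehat g_j$, and norm equivalence on the reference element gives $\|\widehat w\|_{\widehat T_1}^2 \leqslant C\sum_j \|\widehat G_j\|_{0,\widehat F}^2 = C\sum_j h_F^{2j}\|\widehat g_j\|_{0,\widehat F}^2$. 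Unwinding the scalings of the volume measure ($h_F^{d}$) and the surface measure ($h_F^{d-1}$) produces exactly the weights $h_F^{2j+1}$, so that $\|w\|_{T_1}^2 \leqslant C\sum_{j\leqslant p} h_F^{2j+1}(\jump{\nablan^j u},\jump{\nablan^j u})_F$; combining with the bound of the previous paragraph completes the estimate.

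The main obstacle will be making the two norm-equivalence constants \emph{uniform} over all admissible pairs $(T_1,T_2)$: both the extension bound and the reconstruction of $w$ from its Cauchy data are finite-dimensional norm equivalences that are automatic for each fixed geometry, but the lemma demands a single $C$ depending only on $p$ and the shape-regularity. I would secure this through the scaling-plus-compactness argument above, noting that the normal-direction expansion and the identification of $g_j|_F$ with the jumps are preserved under the affine map to the reference configuration. The remaining step — tracking the powers of $h_F$ through the change of variables so the surface terms acquire precisely the weights $h_F^{2j+1}$ — is routine bookkeeping, with the factorials $1/(j!)^2$ absorbed into $C$, but it must be carried out carefully to land on the stated exponents.
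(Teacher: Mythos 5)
Your proposal is correct and follows essentially the same route as the paper's own proof --- note that the paper does not prove this lemma in place but defers to \citep{Massing2012c}, where the argument is precisely your decomposition: extend $u|_{T_2}$ polynomially across $F$, bound the extension term by a scaled finite-dimensional norm equivalence, and control the remainder $w$ through its normal-direction Taylor expansion, whose coefficients on $F$ are exactly the jumps, so that scaling produces the weights $h_F^{2j+1}$. Your use of a compactness argument to make the norm-equivalence constants uniform over shape-regular configurations is only a minor technical variant of the explicit scaling estimates used there.
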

Also note that the inverse
inequalities~\eqref{p3:eq:inverse-estimates-for-triangles}
and~\eqref{p3:eq:inverse-estimate-for-facets} can be generalized, by
the standard scaling argument, to inverse estimates of the type
\begin{align}
  \label{p3:eq:general-inverse-estimate-for-elements}
  h_T^j \|D^j u \|_T &\leqslant C h_T^{j-i} \| D^{j-i} u \|_T
  \quad \text{for } i \leqslant j, \\
  \label{p3:eq:general-inverse-estimate-for-facets}
  h_F^{1/2} \| \nablan^{j} u \|_F &\leqslant C  \| D^j u \|_{T} ,
\end{align}
where $D^j u $ denotes the $j$-th total derivative of $u$.
\begin{proposition}
  \label{p3:prop:l2-norm-on-olm}
  Let $\Omega_1$ be the overlapped domain and $\meshast_1$ be the
  overlapped mesh, and let $\mcV_{h,1}$ be a finite element space on
  $\meshast_1$ consisting of continuous piecewise polynomials.
  Then
  \begin{align}
    \| v_h \|_{\meshast_1}^2
    &\leqslant C ( \| v_h \|_{\Omega_1}^2
    + \sum_{T \in \meshast_1} h_T^2 \| \nabla v_h \|_T^2) ,
    \label{p3:eq:poincare-for-meshast-1} \\
    \| v_h \|_{\meshast_1}^2
    &\leqslant C ( \| v_h \|_{\Omega_1}^2 + \| \nabla v_h \|_{\meshast_1}^2) ,
    \label{p3:eq:poincare-for-meshast-2}
  \end{align}
  for all $v_h \in \mcV_{h, 1}$.
\end{proposition}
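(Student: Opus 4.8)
The plan is to establish the first estimate \eqref{p3:eq:poincare-for-meshast-1} and then to deduce \eqref{p3:eq:poincare-for-meshast-2} from it. For the latter, observe that since $\Omega$ is bounded the mesh sizes satisfy $h_T \leqslant C$ uniformly, so that $\sum_{T\in\meshast_1} h_T^2\|\nabla v_h\|_T^2 \leqslant C\sum_{T\in\meshast_1}\|\nabla v_h\|_T^2 = C\|\nabla v_h\|_{\meshast_1}^2$; inserting this bound into \eqref{p3:eq:poincare-for-meshast-1} immediately yields \eqref{p3:eq:poincare-for-meshast-2}. The whole difficulty therefore lies in \eqref{p3:eq:poincare-for-meshast-1}.

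To prove \eqref{p3:eq:poincare-for-meshast-1}, I would split $\meshast_1 = \mesh_{0,1}\cup\mesh_{0,\Gamma}$ as in \eqref{p3:eq:meshast-1}. For the uncut elements $T\in\mesh_{0,1}$ we have $T\subset\overline{\Omega}_1$, so that $\sum_{T\in\mesh_{0,1}}\|v_h\|_T^2 \leqslant \|v_h\|_{\Omega_1}^2$ trivially. The content of the estimate is thus to control $\|v_h\|_T^2$ for the cut elements $T\in\mesh_{0,\Gamma}$, whose intersection with $\Omega_1$ may be arbitrarily small; a naive bound $\|v_h\|_T^2\leqslant C\|v_h\|_{T\cap\Omega_1}^2$ is impossible with an interface-independent constant, which is exactly why the gradient term appears on the right-hand side.

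The core tool is Lemma \ref{p3:lem:l2norm-control-via-jumps}: applied to a cut element $T_1$ and a face-adjacent neighbour $T_2$ lying deeper inside $\meshast_1$, sharing the face $F$, it gives $\|v_h\|_{T_1}^2 \leqslant C(\|v_h\|_{T_2}^2 + \sum_{j\leqslant p} h_F^{2j+1}\|\jump{\nablan^{j} v_h}\|_{F}^2)$. Since $v_h$ is continuous across $F$, the $j=0$ term vanishes, and for $j\geqslant 1$ the generalized inverse estimates \eqref{p3:eq:general-inverse-estimate-for-facets} and \eqref{p3:eq:general-inverse-estimate-for-elements} bound each facet jump by $h_T^2\|\nabla v_h\|_T^2$ on the two elements sharing $F$ (using shape regularity and $h_F\sim h_{T_1}\sim h_{T_2}$). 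I would then chain this estimate along a path of face-adjacent elements leading from each cut element $T$ to an element $T_\star\subset\overline{\Omega}_1$, so that $\|v_h\|_T^2$ is dominated by $\|v_h\|_{T_\star}^2$ plus accumulated jump terms. Summing over all cut elements, the first contributions assemble into $C\|v_h\|_{\Omega_1}^2$ while the jump contributions assemble into $C\sum_{T\in\meshast_1}h_T^2\|\nabla v_h\|_T^2$, which is \eqref{p3:eq:poincare-for-meshast-1}.

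The main obstacle is the chaining argument and keeping its constant independent of the location of $\Gamma$. One must verify that every cut element can be joined to an uncut interior element of $\mesh_{0,1}$ by a path of face-adjacent elements whose length $N$ is bounded uniformly in $h$ and in the position of the interface, so that the compounded constant $C^N$ stays bounded, and that each interior target element is reached by only boundedly many cut elements, so that the summation does not over-count $\|v_h\|_{\Omega_1}^2$. This bounded-width property of the cut band $\mesh_{0,\Gamma}$ is where the shape-regularity of $\mesh_0$, the mesh-size compatibility \eqref{p3:eq:meshsize-combatibility}, and the Lipschitz regularity of $\Gamma$ enter; once it is in place, the remaining steps are routine applications of the inverse inequalities already recorded.
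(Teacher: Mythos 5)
Your proposal follows essentially the same route as the paper's proof: split $\meshast_1$ into $\mesh_{0,1}\cup\mesh_{0,\Gamma}$, bound the cut-element contributions via Lemma~\ref{p3:lem:l2norm-control-via-jumps} combined with the generalized inverse estimates \eqref{p3:eq:general-inverse-estimate-for-facets} and \eqref{p3:eq:general-inverse-estimate-for-elements} (with the $j=0$ jump vanishing by continuity), and deduce \eqref{p3:eq:poincare-for-meshast-2} from \eqref{p3:eq:poincare-for-meshast-1} using the uniform bound on $h_T$. The only difference is that you make explicit the chaining of the lemma from cut elements to elements inside $\overline{\Omega}_1$, together with the bounded path-length and bounded-overlap requirements, which the paper leaves implicit in its one-step application of the lemma.
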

\begin{proof}
  By definition
  \begin{equation}
    \label{p3:eq:l2-norm-on-olm:step1}
    \| v_h \|_{\meshast_1}^2
    = \sum_{T \in \mesh_{0, 1}} \| v_h \|_{T}^2
    + \sum_{T_1 \in \mesh_{0, \Gamma}} \| v_h \|_{T_1}^2 .
  \end{equation}
  Applying Lemma~\ref{p3:lem:l2norm-control-via-jumps} and
  subsequently
  invoking~\eqref{p3:eq:general-inverse-estimate-for-facets}
  and~\eqref{p3:eq:general-inverse-estimate-for-elements} give
  \begin{equation}
    \label{p3:eq:l2-norm-on-olm:step2}
    \sum_{T_1 \in \mesh_{0, \Gamma}} \| v_h \|_{T_1}^2
    \leqslant C \sum_{T \in \meshast_1}
    \left ( \| v_h \|^2_{T}
    + h_T^2 \| \nabla v_h \|_T^2  \right ) ,
  \end{equation}
  where the shape-regularity of $\meshast_1$ provides a bound on $h_F$
  in terms of $h_T$. Combining~\eqref{p3:eq:l2-norm-on-olm:step1}
  and~\eqref{p3:eq:l2-norm-on-olm:step2}
  yields~\eqref{p3:eq:poincare-for-meshast-1}.

  Since $h_T \leqslant |\Oast_1 | $, the second Poincar\'e type
  inequality~\eqref{p3:eq:poincare-for-meshast-2} is a simple
  consequence of the first one.
  \qed
\end{proof}

\subsection{Interpolation estimates}
\label{p3:ssec:interpolation-est}

The aim of this section is to construct and analyze an interpolation
operator $\pi_h : L^2(\Omega) \rightarrow \mcV_h$ and a corrected
interpolation operator $\pi_h^c: [H^1(\Omega)]^d \rightarrow V_h$. The
construction is based on extending the
standard Scott--Zhang interpolation operator~\citep{ScottZhang1990}.

First, for $s \geqslant 0$, there exists a linear extension operator
$\mcE^s:H^s(\Omega_1) \rightarrow H^s(\Oast_1)$ and a constant $C > 0$
such that $\mcE^s v |_{\Omega_1} = v$ and
\begin{equation}\label{p3:stabext}
  \| \mcE^s v \|_{s,\Oast_1} \leqslant C \| v \|_{s,\Omega_1}
\end{equation}
for all $v \in H^s(\Omega_1)$~\citep{Stein1970}.

Next, for $i = 1, 2$, let $\pi_{h,i} : L^2(\Oast_i) \rightarrow
\mcV_{h,i}$ be the Scott--Zhang interpolation operator and recall the
standard interpolation error estimates~\citep{ScottZhang1990}:
\begin{alignat}{3}
  \| v_i - \pi_{h,i} v_i \|_{r,T} &\leqslant C h^{s-r}| v_i |_{s,\omega(T)},
  &\quad 0\leqslant r \leqslant s \quad &\foralls T\in \meshast_i,
  \label{p3:eq:interpest0}
  \\
  \| v_i - \pi_{h,i} v_i \|_{r,F} &\leqslant C h^{s-r-1/2}| v_i
  |_{s,\omega(F)},
  &\quad 0\leqslant r \leqslant s \quad & \foralls F \in \partial_i \meshast_i,
  \label{p3:eq:interpest1}
\end{alignat}
for all $v_i \in H^s(\Omega_i^*)$ for $i = 1, 2$. Here, $\omega(T)$ is
the patch of all elements sharing a vertex with $T$; the patch
$\omega(F)$ of a face $F$ is defined analogously.

Altogether, we define
\begin{equation}
  \label{p3:eq:def:interp}
  \pi_{h}: H^s(\Omega_1) \bigoplus
  H^s(\Omega_2) \rightarrow \mcV_{h}, \quad s \geqslant 1,
\end{equation}
by
\begin{equation}
  \pi_{h} v = \pi_{h,1} \mcE^s v_1 \oplus \pi_{h,2} v_2 .
\end{equation}
The estimate~\eqref{p3:stabext} together with the interpolation error
estimate~\eqref{p3:eq:interpest0} for the Scott--Zhang interpolation
operator imply the following interpolation estimates:
\begin{alignat}{3}
  \| v - \pi_h v \|_{r,T} &\leqslant C h^{s-r}| v |_{s,\omega(T)},
  &\quad 0\leqslant r \leqslant s \quad &\foralls T \in \mesh_1 \cup \mesh_2,
  \label{p3:eq:interpest0-cutmes}
  \\
  \label{p3:eq:interpest1-cutmes}
  \| v - \pi_h v \|_{r,F} &\leqslant C h^{s-r-1/2}| v |_{s,\omega(T)},
  &\quad 0\leqslant r \leqslant s \quad &\foralls F \in \partial_i \mesh_1
  \cup \partial_i \mesh_2.
\end{alignat}

We now return to our specific finite elements spaces $V_h$ and
$Q_h$. We continue writing $\pi_h$ for both the interpolation
operator~\eqref{p3:eq:def:interp} defined for $[H^{k+1}(\Omega)]^d
\rightarrow V_h^{k}$ and $H^{l+1}(\Omega) \rightarrow Q_h^l$.  The
following lemma provides an interpolation error estimate in the $\tn
\cdot \tn$ norms.
\begin{lemma}
  There is a constant $C > 0$ such that for all $\bfv \in
  [H^{k+1}(\Omega)]^d$, $q \in H^{l+1}(\Omega)$ it holds that
  \begin{align}
    \tn \bfv - \pi_h \bfv \tn &\leqslant C h^{k}  | \bfv
    |_{k+1,\Omega},
    \label{p3:eq:interpest-v}
    \\
    \tn q - \pi_h q\tn &\leqslant C h^{l+1} | q |_{l+1,\Omega},
    \label{p3:eq:interpest-p}
    \\
    \tn (\bfv - \pi_h \bfv, q - \pi_h q) \tn
    &\leqslant C ( h^{k} | \bfv |_{k+1,\Omega}
    + h^{l+1} | q |_{l+1,\Omega} ).
    \label{p3:eq:interpest-vp}
  \end{align}
\end{lemma}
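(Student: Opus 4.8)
The plan is to prove the three estimates essentially independently, since (p3:eq:interpest-vp) follows at once from (p3:eq:interpest-v) and (p3:eq:interpest-p): by the definition (p3:eq:triple-up-norm) we have $\tn(\bfv-\pi_h\bfv,\,q-\pi_h q)\tn^2 = \tn\bfv-\pi_h\bfv\tn^2 + \tn q-\pi_h q\tn^2$, and subadditivity of the square root turns the two summed bounds into the claimed sum. The pressure estimate (p3:eq:interpest-p) is the easy case: by (p3:eq:triple-p-norm), $\tn q-\pi_h q\tn = \|q-\pi_h q\|_{\OmcupOm}$, so applying the element-wise bound (p3:eq:interpest0-cutmes) with $r=0$, $s=l+1$ and summing over $T\in\mesh_1\cup\mesh_2$ (the patches $\omega(T)$ overlapping only finitely often) yields $Ch^{l+1}|q|_{l+1,\Omega}$. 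The bulk term of the velocity norm is handled identically with $r=1$, $s=k+1$, giving $\|\nabla(\bfv-\pi_h\bfv)\|_{\OmcupOm}\le Ch^k|\bfv|_{k+1,\Omega}$.

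It remains to bound the two interface contributions to $\tn\bfv-\pi_h\bfv\tn^2$, and here the idea is to convert $\Gamma$-traces into element-bulk norms via the trace inequality (p3:eq:trace-inequality-for-FD) and then invoke the interpolation estimates. Since we take $\meanvalue{v}=v_2$, the average term localizes to $\mesh_2$; writing $e=\bfv_2-\pi_{h,2}\bfv_2$ and applying (p3:eq:trace-inequality-for-FD) to $\nablan e$ (a first-derivative quantity, so its gradient brings in $D^2e$) gives
\begin{equation*}
  \ifnorm{\meanvalue{\nablan(\bfv-\pi_h\bfv)}}{-1/2}^2
  = \sum_{T\in\mesh_2} h_T\,\|\nablan e\|_{\Gamma\cap T}^2
  \le C\sum_{T\in\mesh_2}\bigl(\|\nabla e\|_T^2 + h_T^2\|D^2e\|_T^2\bigr),
\end{equation*}
and the interpolation estimate (p3:eq:interpest0) at $r=1$ and $r=2$ makes both summands $O(h^{2k}|\bfv_2|_{k+1,\omega(T)}^2)$, summing to $Ch^{2k}|\bfv|_{k+1,\Omega}^2$. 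For the jump term I would exploit that the exact $\bfv\in[H^{k+1}(\Omega)]^d$ is single-valued across $\Gamma$, so $\jump{\bfv}=0$ and $\jump{\bfv-\pi_h\bfv}$ is the difference of the two one-sided interpolation errors $\bfv_2-\pi_{h,2}\bfv_2$ and $\mcE^{k+1}\bfv_1-\pi_{h,1}\mcE^{k+1}\bfv_1$; applying (p3:eq:trace-inequality-for-FD) to each with the $r=0$ and $r=1$ estimates gives $\|\cdot\|_{\Gamma\cap T}\le Ch^{k+1/2}|\bfv|_{k+1,\omega(T)}$, whence $h_T^{-1}\|\jump{\bfv-\pi_h\bfv}\|_{\Gamma\cap T}^2 = O(h^{2k})$ and the sum again gives $Ch^{2k}|\bfv|_{k+1,\Omega}^2$.

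The hard part will be the $\Omega_1$-side contribution to the jump, for two reasons. First, $\pi_{h,1}$ acts on the Stein extension $\mcE^{k+1}\bfv_1$ rather than on $\bfv$, so the passage to the right-hand seminorm must be routed through the extension-stability estimate (p3:stabext); since that estimate controls full norms, keeping the seminorm $|\bfv|_{k+1,\Omega}$ on the right is the delicate point (it is exactly what has been absorbed into the already-stated form (p3:eq:interpest0-cutmes), which I would lean on). Second, the interface norm $\ifnormalpha{\cdot}$ sums over the overlapping mesh $\mesh_2$, whereas the $\Omega_1$-side error is piecewise polynomial on $\meshast_1$ with elements cut by $\Gamma$; relating each $\Gamma\cap T$ for $T\in\mesh_2$ to the $\meshast_1$-elements it meets, and interchanging the two element sizes, is precisely where the mesh-compatibility assumption (p3:eq:meshsize-combatibility), $C^{-1}h_T\le h_{\widetilde T}\le Ch_T$, is used. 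Once the powers of $h$ are matched as above, collecting the bulk, average, and jump bounds gives (p3:eq:interpest-v), and the combined estimate (p3:eq:interpest-vp) follows.
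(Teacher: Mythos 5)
Your proof is correct, but it is considerably more self-contained than the paper's: the paper proves \eqref{p3:eq:interpest-p} exactly as you do (definition of $\pi_h$, the Scott--Zhang estimate \eqref{p3:eq:interpest0}, and the continuity \eqref{p3:stabext} of the extension operator), while for the velocity bound \eqref{p3:eq:interpest-v} it gives no argument at all and simply cites \citet{HansboHansboLarson2003}. What you have written is essentially a reconstruction of that cited proof, and the details check out: the bulk term via \eqref{p3:eq:interpest0-cutmes} with $r=1$; the average term localized to $\mesh_2$ (legitimate because the paper fixes $\meanvalue{v}=v_2$) and converted to element norms by the trace inequality \eqref{p3:eq:trace-inequality-for-FD} with the $r=1,2$ interpolation estimates (this needs $k \geqslant 1$, which holds for the spaces considered); and the jump term split into its two one-sided errors, where you correctly identify that the $\Omega_1$-side error is piecewise polynomial relative to $\meshast_1$ while the interface norm \eqref{p3:eq:alpha-norm} is weighted by $\mesh_2$-element sizes, so that the mesh-compatibility assumption \eqref{p3:eq:meshsize-combatibility} is exactly what allows exchanging $h_T$ and $h_{\widetilde{T}}$ --- this is indeed where that hypothesis is consumed. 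One caveat: as you yourself flag, the Stein extension bound \eqref{p3:stabext} controls full norms rather than seminorms, so strictly the right-hand sides come out as $\| \bfv \|_{k+1,\Omega}$ unless one inserts a Bramble--Hilbert/polynomial-reproduction argument; but this imprecision is inherited from the paper's own statement of \eqref{p3:eq:interpest0-cutmes}, so leaning on that estimate puts you on the same footing as the paper itself.
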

\begin{proof}
  For the proof of~\eqref{p3:eq:interpest-v} we refer
  to~\citet{HansboHansboLarson2003}. The
  estimate~\eqref{p3:eq:interpest-p} follows from the definition of
  $\pi_h$, the interpolation estimate~\eqref{p3:eq:interpest0} and the
  continuity of the extension operator $\mcE^{s}$.
  \qed
\end{proof}

We conclude the section by referring to a modified Scott--Zhang
interpolation operator introduced by~\citet{BeckerBurmanHansbo2009} to
establish a Nitsche extended finite element method for incompressible
elasticity. There, the standard Scott-Zhang interpolant was corrected
to gain some additional orthogonality properties on the
interface. Although only used for continuous piecewise linear velocity
fields in combination with piecewise constant pressures
in~\citet{BeckerBurmanHansbo2009}, we will show in the next section
that the interpolant can be utilized to prove a sufficient inequality
for $b_h(\cdot,\cdot)$ adapted to the case of overlapping meshes.

To construct the corrected interpolant, \citet{BeckerBurmanHansbo2009}
regrouped the elements $T \in \mesh_{0,\Gamma}$ into a collection
$\Gamma_{\omega}$ of patches ${\omega_i}$ in such a way that each
patch can be associated with a hat function $\Phi_i$ constructed from
the finite element functions defined on elements $T \subset \omega_i$.
Moreover, $\omega_i$ and $\Phi_i$ have the following three properties:
there are constants $c, C > 0$ such that
\begin{enumerate}
  \item[(i)]
    $c h \leqslant \diam(\omega_i) \leqslant C h $
  \item[(ii)] $c h \leqslant \int_{\Gamma \cap \omega_i} \Phi_i \leqslant
    C h$
  \item[(iii)]$ c h^{-1} \leqslant | \nabla \Phi_i(x) | \leqslant C h^{-1}$
\end{enumerate}
Loosely speaking, these properties guarantee that patches and hat
functions behave as standard elements and shape functions.  Although
\citet{BeckerBurmanHansbo2009} only elaborate on the construction in two
dimension and for a non-fitting interface, a similar construction is
possible in any spatial dimension $d$ and for the case where the
interface is defined by the boundary of another mesh.

By adjusting the degrees of freedom associated with the hat functions
$\{\Phi_i\}$, an interpolation operator
\begin{equation*}
\pi^c_{h,1}:[H^1(\Oast_1)]^d \rightarrow V_{h,1}
\end{equation*}
can be defined that satisfies an orthogonality property of the form
\begin{equation}
  \label{p3:eq:scott-zhang-interpolant-orthogonality}
  (\bfv_1-\pi^c_{h,1} \bfv_1, \bfn)_{\Gamma \cap \omega_i} = 0
  \quad \foralls \omega_i ,
\end{equation}
and an interpolation estimate of the form
\begin{equation}
  \| \bfv_1 - \pi^c_{h,1} \bfv_1 \|_{0, \Oast_1}
  + h \|\nabla(\bfv_1-\pi^c_{h,1} \bfv_1) \|_{0,\Oast_1}
  \leqslant C h \|\nabla \bfv \|_{0,\Oast_1} .
  \label{p3:eq:scott-zhang-interpolant-estimate}
\end{equation}
Finally, we can construct a corrected Scott--Zhang interpolation
operator $\pi_h^c$ for the composite velocity spaces $V_h$ by
\begin{equation}
  \begin{split}
    \label{p3:eq:def:corrected:interpolant}
    \pi_h^c: [H^1(\Omega)]^d &\rightarrow V_{h,1} \bigoplus V_{h,2} \\
    \pi_h^c(\bfv) &= \pi^c_{h,1} \mcE^1 \bfv_1 \oplus \pi_{h,2} \bfv_2
  \end{split}
\end{equation}
where $\pi_{h,2}$ is the standard Scott--Zhang operator on $\Oast_2$.

We will need the following continuity property of the corrected
interpolation operator $\pi^c_h$ with respect to different norms.
\begin{lemma}
  \label{p3:lem:interpolant-continuity}
  Let $\bfv \in [H^1(\Omega)]^d$. There exists a constant $C > 0$ such
  that
  \begin{align}
    \tn \pi^c_h \bfv \tn_{\ast} \leqslant C \| \bfv \|_{1,\OmcupOm}.
    \label{p3:eq:interpolant-continuity}
  \end{align}
\end{lemma}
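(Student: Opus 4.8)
The plan is to square the asserted inequality, expand $\tn\pi_h^c\bfv\tnast^2$ according to the definition~\eqref{p3:eq:triple-v-normast} of the triple-star norm, and bound each of the three resulting pieces separately: the broken gradient term $\|\nabla\pi_h^c\bfv\|_{\mcupm}^2$, the weighted-average term $\ifnorm{\meanvalue{\nablan\pi_h^c\bfv}}{-1/2}^2$, and the jump term $\ifnorm{\jump{\pi_h^c\bfv}}{1/2}^2$. Throughout I would use the splitting $\pi_h^c\bfv=\pi^c_{h,1}\mcE^1\bfv_1\oplus\pi_{h,2}\bfv_2$ from~\eqref{p3:eq:def:corrected:interpolant}, together with the observation that, since $\meanvalue{v}=v_2$, the average term only involves the mesh-$2$ component $\pi_{h,2}\bfv_2$.

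For the gradient term I would establish $H^1$-stability of both components. On $\mesh_2$, the triangle inequality and the Scott--Zhang estimate~\eqref{p3:eq:interpest0} (with $s=r=1$) give $\|\nabla\pi_{h,2}\bfv_2\|_{\mesh_2}\leqslant C\|\bfv_2\|_{1,\Omega_2}$. On $\meshast_1$, I would first bound $\|\nabla\pi^c_{h,1}\mcE^1\bfv_1\|_{\meshast_1}$ by $C\|\nabla\mcE^1\bfv_1\|_{0,\Oast_1}$ using the triangle inequality and the corrected estimate~\eqref{p3:eq:scott-zhang-interpolant-estimate}, and then replace the right-hand side by $C\|\bfv_1\|_{1,\Omega_1}$ via continuity of the extension operator~\eqref{p3:stabext}. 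The average term is then immediate: since $\meanvalue{\nablan\pi_h^c\bfv}=\nablan\pi_{h,2}\bfv_2$, the inverse estimate~\eqref{p3:eq:inverse-estimate-for-fd-traces} (with $h_F\sim h_T$ by shape-regularity) yields $h_T\|\nablan\pi_{h,2}\bfv_2\|^2_{0,\Gamma\cap T}\leqslant C\|\nabla\pi_{h,2}\bfv_2\|_T^2$, so that summation over $T\in\mesh_2$ reduces this term to the already-controlled mesh-$2$ gradient term, hence to $C\|\bfv_2\|_{1,\Omega_2}^2$.

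The jump term is the crux. Here I would exploit that $\bfv\in[H^1(\Omega)]^d$ is single-valued across $\Gamma$, so its trace from either side agrees and $\mcE^1\bfv_1=\bfv_1=\bfv_2$ on $\Gamma$; consequently $\jump{\pi_h^c\bfv}=(\pi_{h,2}\bfv_2-\bfv_2)-(\pi^c_{h,1}\mcE^1\bfv_1-\mcE^1\bfv_1)$ on $\Gamma$, i.e.\ the jump reduces to interpolation error on each side. The $\mesh_2$ part is handled elementwise: the fictitious-domain trace inequality~\eqref{p3:eq:trace-inequality-for-FD} followed by~\eqref{p3:eq:interpest0} gives $\|\bfv_2-\pi_{h,2}\bfv_2\|_{\Gamma\cap T}\leqslant Ch_T^{1/2}|\bfv_2|_{1,\omega(T)}$, whence $h_T^{-1}\|\cdot\|^2_{\Gamma\cap T}\leqslant C|\bfv_2|^2_{1,\omega(T)}$, and summation (finite patch overlap) bounds this by $C\|\bfv_2\|^2_{1,\Omega_2}$. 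For the $\meshast_1$ part, writing $w=\mcE^1\bfv_1-\pi^c_{h,1}\mcE^1\bfv_1$, I would use the mesh-compatibility assumption~\eqref{p3:eq:meshsize-combatibility} to pass from the $h_T^{-1}$-weighted sum over $T\in\mesh_2$ to an $h_{\widetilde T}^{-1}$-weighted sum over the boundary elements $\widetilde T\in\meshast_1$ (using $\|w\|^2_{\Gamma\cap T}\leqslant\sum_{\widetilde T}\|w\|^2_{\Gamma\cap\widetilde T}$ and the bounded number of incident elements), and then apply~\eqref{p3:eq:trace-inequality-for-FD} on each $\widetilde T$ to reduce the $\meshast_1$ part to $C\sum_{\widetilde T}\bigl(h_{\widetilde T}^{-2}\|w\|^2_{\widetilde T}+\|\nabla w\|^2_{\widetilde T}\bigr)$.

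I expect the main obstacle to be closing this last estimate, because the corrected bound~\eqref{p3:eq:scott-zhang-interpolant-estimate} is \emph{global} over $\Oast_1$ and supplies no per-element information, whereas the factor $h_{\widetilde T}^{-2}$ sits inside a sum. The resolution is that the correction modifies only the degrees of freedom on the patches adjacent to $\Gamma$, where, by shape-regularity together with~\eqref{p3:eq:meshsize-combatibility}, the sizes $h_{\widetilde T}$ are all comparable to the neighbouring $h_T$; on this strip the estimate may be used in its (locally quasi-uniform) form $\|w\|_{\widetilde T}\leqslant Ch_{\widetilde T}\|\nabla\mcE^1\bfv_1\|_{0,\Oast_1}$, so that the factor $h_{\widetilde T}^{-2}$ is absorbed and $\sum_{\widetilde T}\bigl(h_{\widetilde T}^{-2}\|w\|^2_{\widetilde T}+\|\nabla w\|^2_{\widetilde T}\bigr)\leqslant C\|\nabla\mcE^1\bfv_1\|^2_{0,\Oast_1}$. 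A final application of~\eqref{p3:stabext} converts this into $C\|\bfv_1\|_{1,\Omega_1}^2$; collecting the three contributions then yields~\eqref{p3:eq:interpolant-continuity}. The only delicate point is to confine the use of the global corrected estimate to the boundary strip, where local comparability of mesh sizes is genuinely available.
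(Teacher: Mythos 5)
Your proposal follows essentially the same route as the paper's own proof: the same three-term expansion of $\tn \pi^c_h \bfv \tnast^2$, the gradient term bounded via stability of the two interpolants and continuity of the extension operator~\eqref{p3:stabext}, the average term reduced to the gradient term through the inverse estimate~\eqref{p3:eq:inverse-estimate-for-fd-traces}, and the jump term handled by inserting $\bfv$ (using $\jump{\bfv}=0$), applying the trace inequality~\eqref{p3:eq:trace-inequality-for-FD} elementwise, and invoking the interpolation estimates~\eqref{p3:eq:interpest0} and~\eqref{p3:eq:scott-zhang-interpolant-estimate}. The two technical points you flag --- converting the sum over $T\in\mesh_2$ in the norm~\eqref{p3:eq:alpha-norm} to a sum over elements of $\meshast_1$ via the mesh-compatibility assumption~\eqref{p3:eq:meshsize-combatibility}, and the fact that~\eqref{p3:eq:scott-zhang-interpolant-estimate} is stated globally while the weighted sum needs local control near $\Gamma$ --- are handled silently in the paper's proof, so your resolution is a refinement of, not a departure from, its argument.
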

\begin{proof}
  By definition,
  \begin{equation*}
  \tn \pi^c_h \bfv_h \tnast^2 = \| \nabla \pi^c_h \bfv_h \|_{\mcupm}^2
  + \ifnorm{ \langle \nablan \pi^c_h \bfv_h \rangle }{-1/2}^2
  + \ifnorm{\jump{\pi^c_h \bfv_h}}{1/2}^2.
  \end{equation*}
  The estimate for the first term follows directly from the definition
  (and boundedness) of $\pi^c_h$ and the continuity of the extension
  operator $\mcE$. Using the inverse
  estimate~\eqref{p3:eq:inverse-estimate-for-fd-traces}, the bound for
  the second follows from the bound of the first.  To estimate the
  last term, we first note that $ \ifnorm{\jump{\pi^c_h \bfv}}{1/2} =
  \ifnorm{\jump{\pi^c_h \bfv-\bfv}}{1/2} $ since $ \jump{\bfv} = 0 $
  for $ \bfv \in H^1(\Omega) $ and thus
  \begin{align*}
    \ifnorm{\jump{\pi^c_h \bfv}}{1/2}^2
    \leqslant
    \ifnorm{ \pi^c_{h} \bfv - \bfv_1}{1/2}^2
    + \ifnorm{ \pi^c_{h} \bfv - \bfv_2}{1/2}^2.
  \end{align*}
  Another application of the trace inequality
  \eqref{p3:eq:trace-inequality-for-FD} combined with interpolation
  estimates~\eqref{p3:eq:scott-zhang-interpolant-estimate} yields for
  $i = 1, 2$:
  \begin{align*}
    \ifnorm{\pi^c_{h} \bfv - \bfv_i}{-1/2}^2
    &\leqslant
    \sum_{T \in \meshast_i}
    h^{-1}_{T} \| \pi^c_h \bfv - \bfv \|_{\Gamma \cap T}^2
    \\
    &\leqslant
    \sum_{T \in \meshast_i}
    h^{-1} ( h^{-1} \| \pi^c_h \bfv - \bfv \|_{ T}^2
    + h \| \nabla ( \pi^c_h \bfv - \bfv) \|_{ T}^2)
    \\
    &\leqslant \| \bfv \|_{1,\meshast_i}^2
    \leqslant C \| \bfv \|_{1, \OmcupOm}^2.
  \end{align*}
  \qed
\end{proof}

\section{Stability estimates}
\label{p3:sec:stability-props}

In this section, we first prove that the form $a_h + s_h$ is
continuous and coercive with respect to the $\tn \cdot \tnast$ norm in
Proposition~\ref{p3:prop:a_h-stabprop}; that $b_h$ is continuous with
respect to both the $\tn \cdot \tn$ and the $\tn \cdot \tnast$ norms
in Proposition~\ref{p3:prop:b_h-stabprop}; and an additional
inequality for $b_h$ in
Proposition~\ref{p3:prop:stabest-b}. Subsequently, these estimates are
used to show that the complete stabilized Nitsche form $A_h$ satisfies
the Babu\v ska--Brezzi inf-sup condition in
Theorem~\ref{p3:thm:stability}.

We begin by demonstrating that $a_h + s_h$ is continuous and stable
with respect to the norm $\tn \cdot \tnast$.
\begin{proposition}
  \label{p3:prop:a_h-stabprop}
  Let $a_h$ and $s_h$ be defined by~\eqref{p3:eq:ah-olm}
  and~\eqref{p3:eq:sh-olm}. There are constants $c > 0$ and $C > 0$
  such that
  \begin{alignat}{5}
    a_h(\bfu_h,\bfv_h) + s_h(\bfu_h,\bfv_h) &\leqslant C \tn \bfu_h \tn_{\ast} \;
    \tn \bfv_h \tn_{\ast} &\quad &\foralls
    \bfu_h,\bfv_h \in V_h,
    \label{p3:eq:a_h-stable-2}
    \\
    c \tn \bfv_h \tn_{\ast} ^2 &\leqslant a_h(\bfv_h, \bfv_h) + s_h(\bfv_h,
    \bfv_h) &\quad &\foralls \bfv_h \in V_h.
    \label{p3:eq:a_h-stable-3}
  \end{alignat}
\end{proposition}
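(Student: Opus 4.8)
The plan is to prove the two claims separately: continuity~\eqref{p3:eq:a_h-stable-2} by a term-by-term Cauchy--Schwarz argument, and coercivity~\eqref{p3:eq:a_h-stable-3} by the standard Nitsche absorption argument, crucially augmented by the overlap penalty $s_h$.

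For continuity I would bound each of the four terms of $a_h$ and the single term of $s_h$ by products of $\tn\cdot\tnast$ factors. The volume term obeys $(\nabla\bfu_h,\nabla\bfv_h)_{\OmcupOm}\leqslant\|\nabla\bfu_h\|_{\mcupm}\|\nabla\bfv_h\|_{\mcupm}$ because $\OmcupOm$ is contained in the meshed region $\mcupm$. For the two consistency terms I would pair the mesh weights so that, elementwise on $\Gamma\cap T$, one gets $(\meanvalue{\nablan\bfu_h},\jump{\bfv_h})_{\Gamma}\leqslant\ifnorm{\meanvalue{\nablan\bfu_h}}{-1/2}\ifnorm{\jump{\bfv_h}}{1/2}$, and likewise for the penalty term $\gamma(h^{-1}\jump{\bfu_h},\jump{\bfv_h})_{\Gamma}$. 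Finally, using $s_h(\bfu_h,\bfv_h)=(\nabla(\bfu_{h,1}-\bfu_{h,2}),\nabla(\bfv_{h,1}-\bfv_{h,2}))_{\OmegaO}$ together with $\|\nabla(\bfu_{h,1}-\bfu_{h,2})\|_{\OmegaO}\leqslant\|\nabla\bfu_{h,1}\|_{\Oast_1}+\|\nabla\bfu_{h,2}\|_{\mesh_2}\leqslant\sqrt{2}\,\tn\bfu_h\tnast$ (since $\OmegaO\subset\Oast_1$ and $\OmegaO\subset\Omega_2$) gives $s_h\leqslant 2\,\tn\bfu_h\tnast\,\tn\bfv_h\tnast$. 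Summing yields~\eqref{p3:eq:a_h-stable-2}.

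For coercivity I would first compute $a_h(\bfv_h,\bfv_h)=\|\nabla\bfv_h\|^2_{\OmcupOm}-2(\meanvalue{\nablan\bfv_h},\jump{\bfv_h})_{\Gamma}+\gamma\ifnorm{\jump{\bfv_h}}{1/2}^2$. The key observation is that, with the choice $\meanvalue{v}=v_2$, the average $\meanvalue{\nablan\bfv_h}=\nablan\bfv_{h,2}$ lives on $\mesh_2$, so the trace inverse estimate~\eqref{p3:eq:inverse-estimate-for-fd-traces} gives $\ifnorm{\meanvalue{\nablan\bfv_h}}{-1/2}^2\leqslant C_t\|\nabla\bfv_{h,2}\|^2_{\mesh_2}$ for a constant $C_t$. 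Applying Young's inequality to the cross term, $2|(\nablan\bfv_{h,2},\jump{\bfv_h})_{\Gamma}|\leqslant\epsilon\,C_t\|\nabla\bfv_{h,2}\|^2_{\mesh_2}+\epsilon^{-1}\ifnorm{\jump{\bfv_h}}{1/2}^2$, and using $\|\nabla\bfv_h\|^2_{\OmcupOm}=\|\nabla\bfv_{h,1}\|^2_{\Omega_1}+\|\nabla\bfv_{h,2}\|^2_{\mesh_2}$, I obtain
\[
a_h(\bfv_h,\bfv_h)\geqslant\|\nabla\bfv_{h,1}\|^2_{\Omega_1}+(1-\epsilon C_t)\|\nabla\bfv_{h,2}\|^2_{\mesh_2}+(\gamma-\epsilon^{-1})\ifnorm{\jump{\bfv_h}}{1/2}^2.
\]
Choosing $\epsilon$ small enough that $\epsilon C_t\leqslant\tfrac12$ and then $\gamma$ large enough that $\gamma-\epsilon^{-1}\geqslant1$ keeps all three coefficients bounded away from zero.

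The main obstacle is that this bound only controls the gradient on the physical region $\OmcupOm$, whereas $\tn\cdot\tnast$ requires control on the larger meshed region $\mcupm$, i.e.\ the extra piece $\|\nabla\bfv_{h,1}\|^2_{\OmegaO}$ on the overlap. This is exactly where $s_h$ enters: by the triangle inequality $\|\nabla\bfv_{h,1}\|^2_{\OmegaO}\leqslant 2\|\nabla(\bfv_{h,1}-\bfv_{h,2})\|^2_{\OmegaO}+2\|\nabla\bfv_{h,2}\|^2_{\OmegaO}\leqslant 2\,s_h(\bfv_h,\bfv_h)+2\|\nabla\bfv_{h,2}\|^2_{\mesh_2}$. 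Adding $s_h(\bfv_h,\bfv_h)\geqslant0$ to the bound above and spending a fixed fraction of $s_h$ together with a fixed fraction of the reserved $\|\nabla\bfv_{h,2}\|^2_{\mesh_2}$ on this recovery inequality produces a positive multiple of $\|\nabla\bfv_{h,1}\|^2_{\OmegaO}$, hence of $\|\nabla\bfv_{h,1}\|^2_{\Oast_1}$. The leftover fraction of $\|\nabla\bfv_{h,2}\|^2_{\mesh_2}$ simultaneously controls both the $\mesh_2$-gradient and, via the same inverse estimate, the average term $\ifnorm{\meanvalue{\nablan\bfv_h}}{-1/2}^2$. Collecting the four controlled quantities gives $a_h(\bfv_h,\bfv_h)+s_h(\bfv_h,\bfv_h)\geqslant c\,\tn\bfv_h\tnast^2$ with $c>0$ depending only on $C_t$ and the chosen $\gamma$, which is~\eqref{p3:eq:a_h-stable-3}.
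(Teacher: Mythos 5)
Your proof is correct, but it takes a more self-contained route than the paper. The paper disposes of the proposition in two lines: it cites \citet{HansboHansboLarson2003} for continuity and coercivity of $a_h$ alone with respect to $\tn\cdot\tn$, and then observes that $\tn\bfv_h\tnast^2 = \tn\bfv_h\tn^2 + \|\nabla\bfv_{h,1}\|^2_{\OmegaO}$ together with the equivalence $\|\nabla\bfv_h\|^2_{\mcupm}\sim\|\nabla\bfv_h\|^2_{\OmcupOm}+\|\nabla(\bfv_{h,1}-\bfv_{h,2})\|^2_{\OmegaO}$ converts that cited result into the stated one. Your argument re-derives the cited ingredient from scratch: the term-by-term Cauchy--Schwarz bounds for continuity and the standard Nitsche absorption for coercivity (the trace inverse estimate~\eqref{p3:eq:inverse-estimate-for-fd-traces} applied to $\meanvalue{\nablan\bfv_h}=\nablan\bfv_{h,2}$, Young's inequality, $\gamma$ large) are precisely the content of the result the paper imports, while your recovery step $\|\nabla\bfv_{h,1}\|^2_{\OmegaO}\leqslant 2\,s_h(\bfv_h,\bfv_h)+2\|\nabla\bfv_{h,2}\|^2_{\mesh_2}$ is exactly the nontrivial direction of the paper's norm equivalence, and your bookkeeping of the fractions spent on each term does close the argument. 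What your version buys: it is self-contained, it makes explicit that coercivity needs $\gamma$ sufficiently large relative to the inverse-trace constant (an assumption the paper leaves implicit in the citation), and it exposes precisely how $s_h$ pays for the overlap contribution $\|\nabla\bfv_{h,1}\|^2_{\OmegaO}$ that distinguishes $\tn\cdot\tnast$ from $\tn\cdot\tn$. What the paper's version buys is brevity and a clean separation of concerns: the Nitsche-specific work is delegated to the literature, and only the overlap modification --- the genuinely new part of this proposition --- is argued.
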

\begin{proof}
  A proof in the absence of the term $s_h$ and with $\tn \cdot \tnast$
  replaced by $\tn \cdot \tn$ is given in
  \citet{HansboHansboLarson2003}.  Since by definition $\tn \bfv_h
  \tnast^2 = \tn \bfv_h \tn^2 + \| \nabla \bfv_h \|_{\OmegaO}^2$ the
  estimates~\eqref{p3:eq:a_h-stable-2} and~\eqref{p3:eq:a_h-stable-3}
  follow directly from the equivalence $\| \nabla \bfv_h \|^2_{\mcupm}
  \sim \| \nabla \bfv_h \|^2_{\OmcupOm} + \|\nabla(\bfv_{h,1} -
  \bfv_{h,2}) \|_{\OmegaO}^2$ and the definition of $s_h$.
  \qed
\end{proof}

Next, we state the continuity properties of $b_h$ with respect to the
various norms.
\begin{proposition}
  \label{p3:prop:b_h-stabprop}
  Let $b_h$ be defined by~\eqref{p3:eq:bh-olm}. There exists a
  constant $C > 0$ such that
  \begin{alignat}{2}
    b_h(\bfv, q) &\leqslant C \tn \bfv \tn \, \tn q \tn
    &\quad &\foralls (\bfv,q) \in [H^1(\Omega)]^3 \times L^2(\Omega),
    \label{p3:eq:b-cont-wrt-norm-1}
    \\
    b_h(\bfv_h, q_h) &\leqslant C \tn \bfv_h \tn_{\ast} \tn q_h \tn_{\ast}
    &\quad &\foralls (\bfv_h,q_h) \in V_h \times Q_h.
    \label{p3:eq:b-cont-wrt-norm-2}
  \end{alignat}
\end{proposition}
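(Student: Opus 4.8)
The plan is to split $b_h$ into its two constituent pieces, the volume term $-(\ndot\bfv,q)_{\OmcupOm}$ and the interface term $(\bfn\cdot\jump{\bfv},\meanvalue{q})_\Gamma$, and to bound each separately by Cauchy--Schwarz with appropriately distributed mesh weights. The decisive structural observation is that the two inequalities live on genuinely different spaces: in \eqref{p3:eq:b-cont-wrt-norm-1} the velocity $\bfv\in[H^1(\Omega)]^3$ is globally $H^1$ across the interior interface $\Gamma$, so that $\jump{\bfv}=0$ and the interface term drops out entirely, whereas in \eqref{p3:eq:b-cont-wrt-norm-2} the broken field $\bfv_h\in V_h$ forces us to retain and estimate that term.

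For \eqref{p3:eq:b-cont-wrt-norm-1}, with the interface contribution gone, only $-(\ndot\bfv,q)_{\OmcupOm}$ survives. First I would apply Cauchy--Schwarz on $\OmcupOm$ and the elementary pointwise bound $|\ndot\bfv|\leqslant C|\nabla\bfv|$ to obtain $b_h(\bfv,q)\leqslant C\|\nabla\bfv\|_{\OmcupOm}\|q\|_{\OmcupOm}$, and then recognize each factor as being dominated by the corresponding triple norm, $\|\nabla\bfv\|_{\OmcupOm}\leqslant\tn\bfv\tn$ and $\|q\|_{\OmcupOm}=\tn q\tn$. This immediately gives \eqref{p3:eq:b-cont-wrt-norm-1}.

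For \eqref{p3:eq:b-cont-wrt-norm-2}, the volume term is treated exactly as above, using in addition that $\|\cdot\|_{\OmcupOm}\leqslant\|\cdot\|_{\mcupm}$ (the $\ast$-norms carry the extra overlap contributions), so that it is bounded by $C\tn\bfv_h\tn_{\ast}\tn q_h\tn_{\ast}$. For the interface term I would apply a weighted Cauchy--Schwarz on each $\Gamma\cap T$, $T\in\mesh_2$, assigning the factor $h_T^{-1/2}$ to the jump and $h_T^{1/2}$ to the average, which produces
\[
(\bfn\cdot\jump{\bfv_h},\meanvalue{q_h})_\Gamma
\leqslant \ifnorm{\jump{\bfv_h}}{1/2}\,\ifnorm{\meanvalue{q_h}}{-1/2}.
\]
Here the first factor is one of the defining constituents of $\tn\bfv_h\tn_{\ast}$, so it remains only to control the second factor by $\tn q_h\tn_{\ast}=\|q_h\|_{\mcupm}$.

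The main (though ultimately routine) obstacle is this last bound, and it is precisely where the discrete regularity of the pressure is needed. Since $\meanvalue{q_h}=q_{h,2}$ is a finite element function, I would invoke the trace inequality \eqref{p3:eq:trace-inequality-for-FD} to get $h_T\|q_{h,2}\|^2_{\Gamma\cap T}\leqslant C(\|q_{h,2}\|^2_T+h_T^2\|\nabla q_{h,2}\|^2_T)$, and then absorb the gradient term using the inverse estimate \eqref{p3:eq:inverse-estimates-for-triangles}, yielding $h_T\|q_{h,2}\|^2_{\Gamma\cap T}\leqslant C\|q_{h,2}\|^2_T$. Summing over $T\in\mesh_2$ gives $\ifnorm{\meanvalue{q_h}}{-1/2}^2\leqslant C\|q_{h,2}\|^2_{\mesh_2}\leqslant C\|q_h\|^2_{\mcupm}$, i.e.\ $\ifnorm{\meanvalue{q_h}}{-1/2}\leqslant C\tn q_h\tn_{\ast}$. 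Combining the interface and volume estimates establishes \eqref{p3:eq:b-cont-wrt-norm-2}. The reliance on the trace inequality and inverse estimate, both of which require finite element functions, is exactly what explains why this continuity holds only on the discrete space $Q_h$ rather than on all of $L^2(\Omega)$ as in the first estimate.
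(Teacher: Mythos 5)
Your proof is correct and structurally the same as the paper's: the first bound is immediate since $\jump{\bfv}=0$ for $\bfv\in[H^1(\Omega)]^3$, and the second reduces to the interface term, treated by a weighted Cauchy--Schwarz inequality followed by the trace inequality \eqref{p3:eq:trace-inequality-for-FD} and the inverse estimate \eqref{p3:eq:inverse-estimates-for-triangles}, which together give $\ifnorm{q_{h,2}}{-1/2}\leqslant C\|q_{h,2}\|_{\mesh_2}\leqslant C\tn q_h\tnast$. The one step you handle differently is the velocity factor: you keep the jump intact and simply observe that $\ifnorm{\jump{\bfv_h}}{1/2}$ is a defining constituent of $\tn\bfv_h\tnast$, whereas the paper first splits $\jump{\bfv_h}$ into its components $\bfv_{h,1}$ and $\bfv_{h,2}$ and then bounds each trace norm $\ifnorm{\bfv_{h,i}}{1/2}$ by the gradient norms $\|\nabla\bfv_{h,1}\|_{\meshast_1}+\|\nabla\bfv_{h,2}\|_{\mesh_2}$. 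Your variant is the more robust of the two: the paper's intermediate inequality cannot hold as literally written (take $\bfv_{h,1}=\bfv_{h,2}$ equal to a nonzero constant, so that the trace norms are positive while all gradients vanish), and it is only because the two components recombine into the jump that the paper's final line is true; by bounding the jump directly against the norm in which it appears, you reach the same conclusion without relying on that questionable step.
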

\begin{proof}
  The first inequality is a trivial consequence of the fact that the
  interface term $\jump{\bfv}$ vanishes for $\bfv \in
  [H^1(\Omega)]^3$.  To prove the second estimate, it is enough to
  estimate the interface term $(\bfn \cdot \jump{\bfv_h},
  \meanvalue{q_h})_{\Gamma}$. Recall that we chose $\meanvalue{q_h} =
  q_{h,2}$.  We combine the Cauchy--Schwarz inequality
  with~\eqref{p3:eq:trace-inequality-for-FD},
  \eqref{p3:eq:inverse-estimates-for-triangles}
  and~\eqref{p3:eq:inverse-estimate-for-fd-traces} to obtain
  \begin{align*}
    (\meanvalue{q_h},\bfn\cdot\jump{\bfv_{h}})_{\Gamma}
    &\leqslant
    |(q_{h,2},\bfn \cdot \bfv_{h,1})_{\Gamma} |
    + |(q_{h,2}, \bfn \cdot \bfv_{h,2})_{\Gamma} | \\
    &\leqslant
    \ifnorm{q_{h,2}}{-1/2}
    \left (
    \ifnorm{\bfv_{h,1}}{1/2}
    +
    \ifnorm{\bfv_{h,2}}{1/2}
    \right )
    \\
    &\leqslant C
    \| q_{h,2} \|_{\mesh_2}
    \left (
    \| \nabla \bfv_{h,1} \|_{\meshast_1}
    +
    \| \nabla \bfv_{h,2} \|_{\mesh_2} \right ) \\
    &\leqslant C( \tn q_h \tnast
    \tn \bfv_h \tnast).
  \end{align*}
  \qed
\end{proof}

The proof of the inf-sup condition for stabilized Stokes elements
often involves an inequality known as Verf\"urths trick
\citep{Verfurth1994a}.
In the next proposition, we present a version adapted to the case
of overlapping meshes.
\begin{proposition}
  \label{p3:prop:stabest-b}
  There are constants $C_1 > 0$ and $C_2 > 0$ such that for all $q_h
  \in Q_h$:
  \begin{equation}
    \sup_{\bfv_h \in V_h} \frac{b(\bfv_h, q_h)}{\tn \bfv_h \tnast }
    \geqslant C_1 \| q_h \|_{\OmcupOm}
    - C_2 (\sum_{T \in \mcupm} h_T^2 \| \nabla q_h \|^2_{T})^{1/2}.
    \label{p3:eq:bad-inequality}
  \end{equation}
\end{proposition}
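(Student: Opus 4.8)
The plan is to establish this version of Verf\"urth's trick by combining the continuous surjectivity of the divergence operator with the corrected interpolation operator $\pi_h^c$. Given $q_h \in Q_h$, I first invoke the continuous inf-sup (LBB) condition on $\Omega$ (working with the usual pressure normalization): there exists $\bfw \in [H^1_0(\Omega)]^d$ with $-\nabla\cdot\bfw = q_h$ and $\|\bfw\|_{1,\Omega} \leqslant C \|q_h\|_{\OmcupOm}$. I then set $\bfw_h = \pi_h^c \bfw \in V_h$ and use it as a test function in the supremum. By Lemma~\ref{p3:lem:interpolant-continuity}, the denominator is controlled, $\tn \bfw_h \tnast \leqslant C\|\bfw\|_{1,\OmcupOm} \leqslant C\|q_h\|_{\OmcupOm}$, so it suffices to bound $b_h(\bfw_h, q_h)$ from below by $\|q_h\|_{\OmcupOm}^2$ minus a stabilization-controlled correction.

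The core computation is an integration-by-parts identity for $b_h(\bfw_h, q_h)$. Integrating $-(\nabla\cdot\bfw_h, q_h)_{\OmcupOm}$ by parts on each subdomain and collecting the interface contributions with $\bfn$ pointing from $\Omega_2$ into $\Omega_1$, the product-jump identity together with the choice $\meanvalue{q_h} = q_{h,2}$ shows that the term $(\bfn\cdot\jump{\bfw_h},\meanvalue{q_h})_\Gamma$ built into $b_h$ exactly cancels the arising $(\bfn\cdot\jump{\bfw_h}, q_{h,2})_\Gamma$ contribution. Since $\pi_h^c$ preserves the homogeneous boundary values of $\bfw$, the $\partial\Omega$ terms vanish and we are left with
\begin{equation*}
  b_h(\bfw_h, q_h) = (\bfw_h, \nabla q_h)_{\OmcupOm} - (\bfn\cdot\bfw_{h,1}, \jump{q_h})_\Gamma.
\end{equation*}
Performing the same integration by parts on the single-valued field $\bfw$ (so $\jump{\bfw}=0$) and using $-\nabla\cdot\bfw = q_h$ gives $\|q_h\|_{\OmcupOm}^2 = (\bfw,\nabla q_h)_{\OmcupOm} - (\bfn\cdot\bfw,\jump{q_h})_\Gamma$. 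Subtracting, I obtain
\begin{equation*}
  b_h(\bfw_h, q_h) = \|q_h\|_{\OmcupOm}^2 + (\bfw_h - \bfw, \nabla q_h)_{\OmcupOm} - (\bfn\cdot(\bfw_{h,1}-\bfw), \jump{q_h})_\Gamma.
\end{equation*}

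It remains to absorb the two error terms into the correction $(\sum_{T\in\mcupm} h_T^2\|\nabla q_h\|_T^2)^{1/2}$. For the volume term, an elementwise Cauchy--Schwarz inequality, the interpolation estimate $\|\bfw - \bfw_h\|_T \leqslant C h_T\|\nabla\bfw\|_{\omega(T)}$, and $\|\nabla\bfw\|_{\Omega} \leqslant C\|q_h\|_{\OmcupOm}$ yield the bound $C\|q_h\|_{\OmcupOm}(\sum_T h_T^2\|\nabla q_h\|_T^2)^{1/2}$. The interface term is the main obstacle, and it is precisely the reason for introducing $\pi_h^c$: using its orthogonality property~\eqref{p3:eq:scott-zhang-interpolant-orthogonality} I subtract, at no cost, the mean value of $\jump{q_h}$ over each patch $\Gamma\cap\omega_i$, then apply Cauchy--Schwarz patchwise. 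The trace inequality~\eqref{p3:eq:trace-inequality-for-FD} combined with~\eqref{p3:eq:scott-zhang-interpolant-estimate} controls $\|\bfw_{h,1}-\bfw\|_{\Gamma\cap\omega_i}$ by $Ch^{1/2}\|\nabla\bfw\|_{\omega_i}$, while a local approximation estimate on $\Gamma\cap\omega_i$ bounds the mean-free jump by the $h_T\|\nabla q_h\|$ on the neighbouring elements; summing and using the patch properties (i)--(iii) again produces a factor $C\|q_h\|_{\OmcupOm}(\sum_T h_T^2\|\nabla q_h\|_T^2)^{1/2}$. Combining these estimates gives $b_h(\bfw_h, q_h) \geqslant \|q_h\|_{\OmcupOm}^2 - C\|q_h\|_{\OmcupOm}(\sum_T h_T^2\|\nabla q_h\|_T^2)^{1/2}$, and dividing by $\tn \bfw_h \tnast \leqslant C\|q_h\|_{\OmcupOm}$ yields~\eqref{p3:eq:bad-inequality} with constants $C_1, C_2 > 0$.
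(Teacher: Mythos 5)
Your proposal is correct and follows essentially the same route as the paper's own proof: the divergence surjectivity to produce $\bfw$ with $-\nabla\cdot\bfw = q_h$, the corrected interpolant $\pi_h^c$ as test function, the same integration-by-parts splitting into a volume term and an interface term $(\bfn\cdot(\bfw_{h,1}-\bfw),\jump{q_h})_\Gamma$, and the same treatment of the latter via the orthogonality property~\eqref{p3:eq:scott-zhang-interpolant-orthogonality}, patchwise mean subtraction, and the trace and interpolation estimates, concluding with Lemma~\ref{p3:lem:interpolant-continuity}. No substantive differences to report.
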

\begin{proof}
  Let $q_h \in Q_h$ be given. The $\Div$-operator maps
  $[H^1(\Omega)]^d$ onto $ L^2(\Omega)$ and there is a constant $C >
  0$ such that $\| \bfv \|_{1,\Omega} \leqslant C \| \Div \bfv
  \|_{\Omega}$~\citep{GiraultRiviereWheeler2005} for all
  $\bfv$. Choosing $\bfv \in [H^1(\Omega)]^d$ such that $\Div \bfv = -
  q_h$ thus gives
  \begin{equation}
  b_h(\bfv,q_h) =
  \|q_h\|^2_{\Omega} \geqslant C^{-1}\| q_h \|_{\Omega} \| \bfv \|_{1,\Omega},
  \end{equation}
  where we have also used the fact that $\jump{\bfv} = 0$.

  Next, we take $\bfv_h = \picorr_h \bfv$, where
  $\picorr_h$ is the corrected Scott--Zhang interpolant introduced in
  Section~\ref{p3:ssec:interpolation-est}.  It follows that
  \begin{align}
    b_h(\bfv_h, q_h) &\equiv
    b_h(\picorr_h \bfv, q_h) =
    b_h(\picorr_h \bfv - \bfv, q_h)
    + b(\bfv, q_h)
    \\
    &\geqslant
    b_h(\picorr_h \bfv - \bfv, q_h)
    + C^{-1} \|q_h \|_{\Omega} \| \bfv \|_{1,\Omega}.
    \label{p3:eq:bh-with-tildev}
  \end{align}
  To proceed further, we recall that $\meanvalue{q_h} \equiv
  q_{h,2}$, $\bfn \equiv \bfn_2$ and observe that
  \begin{align*}
    \sum_{i=1}^2 (\bfn_i \cdot (\picorr_{h, i} \bfv_i -
    \bfv_i),q_{h,i})_{\Gamma}
    =
    (\bfn \cdot \jump{\picorr_{h} \bfv -
    \bfv},q_{h,2})_{\Gamma} +
    (\bfn \cdot (\picorr_{h, 1} \bfv_1 - \bfv_1),
    \jump{q_h})_{\Gamma}.
  \end{align*}
  Using this identity and integrating the first term
  in~\eqref{p3:eq:bh-with-tildev} by parts, we derive that
  \begin{align*}
    b_h(\picorr_h \bfv - \bfv, q_h)
    &\equiv
    - (\nabla \cdot (\picorr_{h} \bfv - \bfv), q_h)_{\OmcupOm}
    + (\bfn \cdot
    \jump{ \picorr_h \bfv - \bfv}, q_{h,2})_{\Gamma}
    \\
    &=
    \underbrace{
      \sum_{i = 1}^2 \sum_{T \in \mesh_i}
      (\picorr_h \bfv - \bfv,\nabla q_h)_{T \cap \Omega_i}
    }_{I}
    -
    \underbrace{(\bfn \cdot (\picorr_{h, 1} \bfv_1 -
    \bfv_1), \jump{q_{h}})_{\Gamma}}_{II}.
  \end{align*}
  Rewriting $(\picorr_h \bfv - \bfv,\nabla q_h)_{T\cap \Omega} =
  (h_T^{-1} ( \picorr_h \bfv - \bfv),h_T \nabla q_h)_{T \cap \Omega}$
  and applying the interpolation
  estimates~\eqref{p3:eq:interpest0-cutmes}
  and~\eqref{p3:eq:scott-zhang-interpolant-estimate}, the first term
  $I$ can be bounded from below by
  \[
    |I| \geqslant - \| \nabla \bfv \|_{\mcupm} \bigl(\sum_{T \in \mcupm} h_T^2 \| \nabla q_h
    \|^2\bigr)^{1/2}.
  \]
  To estimate the second term $II$, we exploit the orthogonality
  property~\eqref{p3:eq:scott-zhang-interpolant-orthogonality} and
  write
  \begin{equation}
    \begin{split}
      (\bfn \cdot (\picorr_{h,1} \bfv_1 - \bfv_1), \jump{q_{h}})_{\Gamma}
      & =
      \sum_{\omega \in \Gamma_{\omega}}
      (\bfn \cdot (\picorr_{h, 1} \bfv_1 - \bfv_1), q_{h,2}
      - \bar{q}_{h,2})_{\Gamma \cap \omega}
      \\
      & \quad
      - \sum_{\omega \in \Gamma_{\omega}}
      (\bfn \cdot (\picorr_{h, 1} \bfv_1 - \bfv_1), q_{h,1}
      - \bar{q}_{h,1})_{\Gamma \cap \omega} ,
      \label{p3:eq:sum-with-meanvalue}
    \end{split}
  \end{equation}
  where $\bar{q}_{h,i} |_{\omega} = \tfrac{1}{|\omega|}\int_{\omega}
  q_{h,i} \dx$, $i=1,2$. Now each contribution in the
  sum~\eqref{p3:eq:sum-with-meanvalue} can bounded by
  \begin{equation}
    \begin{split}
    \label{p3:eq:sum-contrib}
    (\bfn \cdot (\picorr_{h,1} \bfv_1 - \bfv_1)&,
    q_{h,i} - \bar{q}_{h,i})_{\Gamma \cap \omega} \\
    & \leqslant
    \| \bfn \cdot (\picorr_{h,1} \bfv_1 - \bfv_1)\|_{1/2,h,\Gamma
    \cap \omega}
    \| q_{h,i} - \bar{q}_{h,i} \|_{-1/2,h,\Gamma \cap \omega}
    \end{split}
  \end{equation}
  Using again the trace
  inequality~\eqref{p3:eq:trace-inequality-for-FD}, the last factor
  can estimated by
  \begin{equation*}
    \| q_{h,i} - \bar{q}_{h,i} \|_{-1/2,h,\Gamma \cap \omega}
    \leqslant C | \omega | \|\nabla q_{h,i} \|_{\omega}
    \leqslant C h_T        \|\nabla q_{h,i} \|_{\omega},
  \end{equation*}
  since $q_{h, i} - \bar{q}_{h, i}$ has mean value zero on $\omega$
  and so $\| q_{h, i} - \bar{q}_{h, i} \|_{\omega} \leqslant |\omega| \| \nabla
  q_{h, i} \|_{\omega}$. To arrive at an estimate for first factor
  in~\eqref{p3:eq:sum-contrib}, we use the trace
  inequality~\eqref{p3:eq:trace-inequality-for-FD} in combination with
  the interpolation
  estimate~\eqref{p3:eq:scott-zhang-interpolant-estimate} and the
  continuity of the extension operator:
  \begin{equation*}
    \| \bfn \cdot (\picorr_{h,1} \bfv_1 - \bfv_1)
    \|_{1/2,h,\Gamma\cap\omega}
    \leqslant
    \| \nabla \bfv_1 \|_{\omega\cap\Omega_1}.
  \end{equation*}
  Thus
  \begin{align*}
    | (\bfn \cdot ( \picorr_{h,1} \bfv_1 - \bfv_1), \jump{q_{h}})_{\Gamma} |
    &\leqslant \sum_{\omega \in \Gamma_{\omega}}
    h_T(\| \nabla q_{h,1} \|_{\omega}
    +\| \nabla q_{h,2} \|_{\omega})
    \| \nabla \bfv_1 \|_{\omega\cap\Omega_1}
    \\
    &\leqslant C (\sum_{T\in\mcupm} h_T^2 \| \nabla q_h \|_{T}^2)^{1/2}
    \| \nabla \bfv \|_{\Omega_1}.
  \end{align*}
  Putting $I$ and $II$ together, we see that
  \begin{equation*}
    | b_h(\picorr_h \bfv - \bfv, q_h) |
    \leqslant
    C (\sum_{T\in\mcupm} h_T^2 \| \nabla q_h \|_{T}^2)^{1/2}
    \| \nabla \bfv \|_{\Omega},
  \end{equation*}
  which together with \eqref{p3:eq:bh-with-tildev}
  gives the estimate
  \[
    \frac{b(\picorr_h \bfv, q_h)}{\| \bfv \|_{\Omega} }
    \geqslant \widetilde{C}_1 \| q_h \|_{\OmcupOm}
    - \widetilde{C}_2 (\sum_{T \in \mcupm} h_T^2 \| \nabla q_h \|^2_{T})^{1/2}
  \]
  Now the estimate follows from the
  continuity~\eqref{p3:eq:interpolant-continuity} of the operator
  $\picorr_h$.
  \qed
\end{proof}
\begin{remark}
  Although looking very similar to the standard ``bad-inequality'',
  the inequality~\eqref{p3:eq:bad-inequality} bears some subtle
  differences. Note that we count contributions $\|\nabla q_h \|_T$
  twice in the overlap domain $\Omega_O$.  This is mainly a
  consequence of the trace
  inequality~\eqref{p3:eq:trace-inequality-for-FD}.  The $L^2$-norm of
  $q_h$ on the other hand is only taken on $\OmcupOm$; thus requiring
  special consideration in the subsequent proof of the inf-sup
  estimate for $A_h$ using the norm $\tn q_h \tnast = \| q_h
  \|_{\mcupm}$.
\end{remark}

We conclude the section by proving the inf-sup condition for the
bilinear form $A_h$ with respect to the norm $\tn (\cdot,\cdot) \tnast$.
\begin{theorem}
  Let $A_h$ be defined by~\eqref{p3:eq:Ah-olm}--\eqref{p3:eq:Sh-olm}
  and assume that either $\{\alpha,\beta\} = \{1,1\}$ or
  $\{\alpha,\beta\} = \{-1,1\}$.
  Then there is a constant $c$ such that
  \begin{equation}
    \sup_{(\bfv_h, q_h) \in V_h \times Q_h } \frac{A_h(\bfu_h, p_h;
    \bfv_h,q_h )}{\tn (\bfv_h,q_h) \tn_{\ast}} \geqslant  c \tn
    (\bfu_h,p_h) \tnast
    \quad \foralls (\bfu_h,p_h) \in V_h \times Q_h.
    \label{p3:eq:stability}
  \end{equation}
  \label{p3:thm:stability}
\end{theorem}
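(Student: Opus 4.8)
The plan is to establish the Babu\v ska--Brezzi (inf-sup) condition by the classical two-parameter supremizer construction for stabilized Stokes elements, taking care to account for the fact that the $\tnast$-norms double-count contributions on the overlap $\OmegaO$. Given $(\bfu_h,p_h)\in V_h\times Q_h$, I will exhibit a single test pair of the form $(\bfv_h,q_h)=(\bfu_h+\epsilon\bfw_h,-p_h)$, where $\bfw_h\in V_h$ is a suitable pressure supremizer and $\epsilon>0$ is small, and show both a lower bound $A_h(\bfu_h,p_h;\bfv_h,q_h)\geqslant c'\tn(\bfu_h,p_h)\tnast^2$ and an upper bound $\tn(\bfv_h,q_h)\tnast\leqslant C\tn(\bfu_h,p_h)\tnast$; dividing the former by the latter yields~\eqref{p3:eq:stability}.

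First I would test with $(\bfu_h,-p_h)$. The mixed terms $b_h(\bfu_h,p_h)$ and $b_h(\bfu_h,-p_h)$ cancel, leaving $a_h(\bfu_h,\bfu_h)+s_h(\bfu_h,\bfu_h)-S_h(\bfu_h,p_h;\bfu_h,-p_h)$. The crux here is the sign analysis of $S_h$: writing the residual $r=-\Delta\bfu_h+\nabla p_h$, for $(\alpha,\beta)=(-1,1)$ one gets $-S_h=\delta\sum_T h_T^2\|r\|_T^2\geqslant 0$, while for $(\alpha,\beta)=(1,1)$ one gets $-S_h=\delta\sum_T h_T^2(\|\nabla p_h\|_T^2-\|\Delta\bfu_h\|_T^2)$. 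In both cases the velocity-Laplacian contribution is controlled by the inverse estimate~\eqref{p3:eq:inverse-estimate-delta-grad} and absorbed into the coercivity~\eqref{p3:eq:a_h-stable-3} of $a_h+s_h$ once $\delta$ is fixed small enough, giving $A_h(\bfu_h,p_h;\bfu_h,-p_h)\geqslant c_1\tn\bfu_h\tnast^2+c_2\sum_{T\in\mcupm}h_T^2\|\nabla p_h\|_T^2$.

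Next I would recover the pressure mass via Proposition~\ref{p3:prop:stabest-b}. Choosing a (near-)maximizer $\bfw_h$ normalized so that $\tn\bfw_h\tnast=\|p_h\|_{\OmcupOm}$, testing $A_h$ with $(\bfw_h,0)$ and using the continuity of $a_h+s_h$ (Proposition~\ref{p3:prop:a_h-stabprop}), the bound~\eqref{p3:eq:bad-inequality} on $b_h(\bfw_h,p_h)$, and a Cauchy--Schwarz plus inverse-estimate bound on the surviving $S_h$-term, yields $A_h(\bfu_h,p_h;\bfw_h,0)\geqslant C_1\|p_h\|_{\OmcupOm}^2-C(\tn\bfu_h\tnast+(\sum_T h_T^2\|\nabla p_h\|_T^2)^{1/2})\|p_h\|_{\OmcupOm}$. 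Forming $A_h(\bfu_h,p_h;\bfu_h+\epsilon\bfw_h,-p_h)$ by linearity and applying Young's inequality to the cross terms --- first fixing $\delta$, then $\epsilon$ small --- produces the lower bound $c'(\tn\bfu_h\tnast^2+\|p_h\|_{\OmcupOm}^2+\sum_T h_T^2\|\nabla p_h\|_T^2)$.

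I expect the main obstacle to be converting this into control of the \emph{full} $\tnast$-norm, because $\tn p_h\tnast^2=\|p_h\|_{\mcupm}^2$ double-counts the pressure on $\OmegaO$, whereas the Verf\"urth trick only supplies $\|p_h\|_{\OmcupOm}$ (cf.\ the Remark after Proposition~\ref{p3:prop:stabest-b}). The bridge is the Poincar\'e-type estimate~\eqref{p3:eq:poincare-for-meshast-1} applied to $p_{h,1}$, which gives $\|p_h\|_{\mcupm}^2\leqslant C(\|p_h\|_{\OmcupOm}^2+\sum_{T\in\mcupm}h_T^2\|\nabla p_h\|_T^2)$; the point is that the $h_T^2$-weighted gradient appearing here is exactly the quantity already controlled by the stabilization, which is why the weaker Poincar\'e inequality~\eqref{p3:eq:poincare-for-meshast-1} --- and not~\eqref{p3:eq:poincare-for-meshast-2} --- is the correct tool. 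Hence the lower bound dominates $\tn(\bfu_h,p_h)\tnast^2$. Finally, the upper bound $\tn(\bfu_h+\epsilon\bfw_h,-p_h)\tnast\leqslant C\tn(\bfu_h,p_h)\tnast$ follows from the triangle inequality together with $\tn\bfw_h\tnast=\|p_h\|_{\OmcupOm}\leqslant\|p_h\|_{\mcupm}$, which completes the argument.
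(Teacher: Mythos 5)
Your proposal is correct and follows essentially the same route as the paper's proof: a diagonal test with $(\bfu_h,-p_h)$ exploiting the sign structure of $S_h$ for $(\alpha,\beta)=(\pm 1,1)$, a pressure supremizer from Proposition~\ref{p3:prop:stabest-b} combined linearly with a small parameter and Young's inequality, and the Poincar\'e-type estimate~\eqref{p3:eq:poincare-for-meshast-1} to pass from $\|p_h\|_{\OmcupOm}$ to $\tn p_h\tnast$. You even correctly isolate the same subtlety the paper highlights in its remark, namely that the stabilization term supplies exactly the $h_T^2$-weighted pressure gradients needed to bridge the double-counted overlap norm.
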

\begin{proof}
  The proof follows the presentation given in~\citet{Franca1993}.  Let
  $(\bfu_h, p_h)$ be given and consider the case $\{\alpha, \beta \} =
  \{1,1\}$.

  \emph{Step 1}: Choose $\bfv_h = -\bfw$ such that the supremum in
  \eqref{p3:eq:bad-inequality} is attained. By scaling $\bfw$, we
  can assume that $\tn \bfw \tnast = \| p_h \|_{\OmcupOm}$.
  Inserting $(\bfv_h,q_h) = (-\bfw,0)$ into $A_h$, we obtain
  via~\eqref{p3:eq:a_h-stable-2} and~\eqref{p3:eq:bad-inequality}:
  \begin{align}
    A_h(\bfu_h,p_h; \bfv_h, q_h)
    &= - a_h(\bfu_h,\bfw) - s_h(\bfu_h, \bfw)
    - b_h(\bfw,p_h) + S_h(\bfu_h,p_h;\bfw,0)
    \nonumber
    \\
    &\geqslant
    -C_a \tn \bfu_h \tnast \| p_h \|_{\OmcupOm}
    \nonumber
    \\
    &\quad
    +\{
      C_1 \| p_h \|_{\OmcupOm} - C_2 (\sum_{T \in \mcupm} h_T^2
      \| \nabla q_h \|_T^2)^{1/2}
    \} \| p_h \|_{\OmcupOm}
    \nonumber
    \\
    &\quad
    + \underbrace{\delta \sum_{T \in \mcupm} h^2_T (-\Delta \bfu_h + \nabla p_h,
      -\Delta \bfw)_T}_{I}.
    \label{p3:eq:Ah_estimate_last_line}
\end{align}
We estimate $I$ using Cauchy--Schwarz and the inverse
inequality~\eqref{p3:eq:inverse-estimate-delta-grad}:
\begin{align*}
  I
  &\geqslant
  - C_I \delta
  \| \nabla \bfw \|_{\mcupm}
  \Bigl(
  \| \nabla \bfu_h \|_{\mcupm}
  +
  \bigl(
  \sum_{T \in \mcupm} h_T^2
  \| \nabla p_h \|_T \bigr)^{1/2}
  \Bigr) \\
  &\geqslant
  - C_I \delta
  \| p_h \|_{\OmcupOm}
  \Bigl(
  \tn \bfu_h \tnast
  +
  \bigl( \sum_{T \in \mcupm} h_T^2
  \| \nabla p_h \|_T \bigr)^{1/2}
  \Bigr)
\end{align*}
where the last inequality above follows as $\| \nabla \bfv \|_{\mcupm}
\leqslant \tn \bfv \tnast$ for all $\bfv$ and $\tn \bfw \tnast = \| p_h
\|_{\OmcupOm}$. Inserting this final estimate for $I$
into~\eqref{p3:eq:Ah_estimate_last_line} and using the inequality $ab
\leqslant \epsilon a^2 + (4 \epsilon)^{-1} b^2$ for any $\epsilon > 0$, we
arrive at
\begin{align*}
  A_h(\bfu_h,p_h; - \bfw, 0)
  &\geqslant
  \, (C_1 - \epsilon(C_a + C_2 + \delta C_I))
  \| p_h \|_{\OmcupOm}^2
  \\
  &\quad
  - (C_a + \delta C_I) (4\epsilon)^{-1}
  \tn \bfu_h \tnast^2 \\
  &\quad- (C_2 + \delta C_I) (4\epsilon)^{-1} \sum_{T \in \mcupm} h_T^2
  \| \nabla p_h \|_T^2
  \\
  &\geqslant
  \widetilde{C}_1\| p_h \|_{\OmcupOm}^2
  - \widetilde{C}_2
  \tn \bfu_h \tnast^2
  - \widetilde{C}_3 \sum_{T \in \mcupm} h_T^2
  \| \nabla p_h \|_T^2
\end{align*}
with positive constants $\widetilde{C}_1, \widetilde{C}_2,
\widetilde{C}_3$ for an appropriate choice of $\epsilon$.

\emph{Step 2}: Now choose $(\bfv_h,q_h) =
(\bfu_h,-p_h)$. Proposition~\ref{p3:prop:a_h-stabprop} and
subsequently~\eqref{p3:eq:inverse-estimate-delta-grad} give that
\begin{align*}
  A_h(\bfu_h,p_h; \bfu_h, -p_h)
  &=
  a_h(\bfu_h, \bfu_h) + s_h(\bfu_h, \bfu_h) - S_h(\bfu_h, p_h; \bfu_h, - p_h)
  \\
  \phantom{=}\;
  &\geqslant
  c_a \tn \bfu_h \tnast^2
  - \delta \sum_{T \in \mcupm}
  h_T^2 (-\Delta \bfu_h + \nabla p_h, - \Delta \bfu_h - \nabla
  p_h)_T
  \\
  \phantom{=}\;
  &\geqslant
  (c_a  - \delta C_I) \tn \bfu_h \tnast^2
  + \delta \sum_{T \in \mcupm}
  h_T^2 \| \nabla p_h \|_T^2
  \\
  \phantom{=}\;
  &\geqslant
  \widetilde{C}\tn \bfu_h \tnast^2
  + \delta \sum_{T \in \mcupm}
  h_T^2 \| \nabla p_h \|_T^2
\end{align*}
with positive $\widetilde{C}$ as long as $0 < \delta < c_a C_I^{-1}$.

\emph{Step 3}: To complete the proof, we combine step 1 and step 2 by
taking $(\bfv_h, q_h) = (\bfu_h - \eta \bfw, -p_h)$ for some $\eta >
0$. By choosing $\eta$ sufficiently small,
\begin{align*}
  A(\bfu_h, p_h; \bfv_h, q_h)
  &\geqslant
  C_1\tn \bfu_h \tnast^2
  +C_2 \| p_h \|_{\OmcupOm}^2
  + C_3 \sum_{T \in \mcupm}
  h_T^2 \| \nabla p_h \|_T^2
  \\
  &\geqslant
  C_1\tn \bfu_h \tnast^2 +
  \widetilde{C}_2 \tn p_h \tnast^2
  \geqslant \min(C_1,\widetilde{C}_2) \tn (\bfu_h, p_h) \tnast^2,
\end{align*}
for some constants $C_1, C_2, C_3 > 0$ where we used
Proposition~\ref{p3:prop:l2-norm-on-olm} to find that for some
constant $\widetilde{C}_2 > 0$
\begin{equation*}
  C_2 \| p_h \|_{\OmcupOm}^2 + C_3 \sum_{T \in \mcupm} h_T^2\| \nabla p_h \|_T^2
  \geqslant \widetilde{C}_2 \tn p_h \tnast^2.
\end{equation*}
Since $\tn (\bfv_h, q_h) \tnast \leqslant \tn (\bfu_h, p_h) \tnast + \eta
\tn \bfw \tnast \leqslant (1 + \eta) \tn (\bfu_h, p_h) \tnast$, the estimate
\eqref{p3:eq:stability} follows.

The proof for $\{\alpha,\beta\} = \{-1,1\}$ differs only in the
derivation of the final estimate in step 2 and can be taken from
\citet{Franca1993} with the presented adaption to the overlapping
meshes formulation.
\qed
\end{proof}
\begin{remark}
  We would like to comment on the role of the different
  ghost-penalties. As remarked earlier, the
  bad-inequality~\ref{p3:eq:bad-inequality} ``mixes domains'' in the
  sense that it the contributions $\| \nabla q_h \|_T $ are taken from
  the elements in $\mcupm$, so that the contributions from the overlap
  region $\OmegaO$ are counted twice, while $\| q_h \|$ is evaluated
  only once on $\OmcupOm$. The role of the pressure terms in
  the least-squares ghost-penalty $\sum_{T \in \mcupm} h_T^2(-\Delta
  \bfu_h + \nabla p_h, -\Delta \bfv_h + \nabla q_h)_T$ is two-fold.
  First, they compensate the negative contributions $-\sum_{T \in
  \meshast_1} h_T^2 \| \nabla q_h \|_T^2$ in the bad-inequality.
  Secondly, they allow in combination with
  Lemma~\ref{p3:prop:l2-norm-on-olm} to pass from $\| q_h
  \|_{\OmcupOm} $ to $\| q_h \|_{\mcupm}$. We further note that the
  velocity terms in the least-squares ghost-penalty make it necessary
  (via the inverse
  inequality~\eqref{p3:eq:inverse-estimate-delta-grad}) to control $\|
  \nabla \bfu_{h,1} \|_{\OmegaO}$. This is precisely the role of
  $s_h(\bfu_h, \bfv_h) = (\nabla ( \bfu_{h,1} - \bfu_{h,2}),
  \nabla(\bfv_{h,1} - \bfv_{h,2}))_{\OmegaO}$.
\end{remark}

\section{\emph{A~priori} error estimate}
\label{p3:sec:a-priori}

The previous results on the interpolation errors, the Galerkin
orthogonality of the discretization and its stability enable us to
state the following \emph{a~priori} estimate for the error in the
discrete solution.
\begin{theorem}
  \label{p3:thm:a-priori}
  Let $k,l \geqslant 1$ and $(\alpha,\beta) = (\pm 1, 1)$.
  Assume that $(\bfu, p) \in
  [H^{k+1}(\Omega)]^d \times H^{l+1}(\Omega)$
  is a (weak) solution of the Stokes problem~\eqref{p3:eq:strongform}.
  Then the finite element solution $(\bfu_h, p_h)\in V_h^k \times
  Q_h^l$
  defined in~\eqref{p3:eq:stokes-olm} satisfies the following error
  estimate:
  \begin{equation}
    \tn ( \bfu - \bfu_h , p - p_h )\tn
    \leqslant C \left( h^{k} | \bfu
    |_{k+1,\Omega} + h^{l+1}|p|_{l+1,\Omega} \right).
    \label{p3:eq:a-priori}
  \end{equation}
\end{theorem}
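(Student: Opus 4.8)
The plan is to run the standard inf-sup/Galerkin-orthogonality argument for stabilized mixed methods, using the interpolation estimates of Section~\ref{p3:ssec:interpolation-est} to control the consistency terms. First I would insert the interpolant $\pi_h$ and split the error by the triangle inequality,
\begin{equation*}
  \tn(\bfu-\bfu_h,\,p-p_h)\tn
  \leqslant \tn(\bfu-\pi_h\bfu,\,p-\pi_h p)\tn
  + \tn(\pi_h\bfu-\bfu_h,\,\pi_h p-p_h)\tn .
\end{equation*}
The first (interpolation) term is bounded immediately by~\eqref{p3:eq:interpest-vp}. For the discrete remainder I set $(\bfe_h,\epsilon_h)=(\pi_h\bfu-\bfu_h,\,\pi_h p-p_h)\in V_h\times Q_h$ and note that, since the $\ast$-norms only add the nonnegative overlap contributions, $\tn(\bfe_h,\epsilon_h)\tn\leqslant\tn(\bfe_h,\epsilon_h)\tnast$; it therefore suffices to bound the discrete part in the stronger $\tnast$-norm, which is exactly the norm in which we have stability.

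Next I would apply the inf-sup estimate of Theorem~\ref{p3:thm:stability} to $(\bfe_h,\epsilon_h)$ and rewrite the numerator using the Galerkin orthogonality of Proposition~\ref{p3:prop:galerkin-ortho}. Since $A_h(\bfu-\bfu_h,p-p_h;\bfv_h,q_h)=0$, adding and subtracting the exact solution gives $A_h(\bfe_h,\epsilon_h;\bfv_h,q_h)=A_h(\pi_h\bfu-\bfu,\,\pi_h p-p;\,\bfv_h,q_h)$, so that
\begin{equation*}
  c\,\tn(\bfe_h,\epsilon_h)\tnast
  \leqslant \sup_{(\bfv_h,q_h)\in V_h\times Q_h}
  \frac{\bigl|A_h(\bfu-\pi_h\bfu,\,p-\pi_h p;\,\bfv_h,q_h)\bigr|}{\tn(\bfv_h,q_h)\tnast}.
\end{equation*}
The whole problem is thereby reduced to the continuity bound $|A_h(\bfu-\pi_h\bfu,\,p-\pi_h p;\,\bfv_h,q_h)|\leqslant C(h^k|\bfu|_{k+1,\Omega}+h^{l+1}|p|_{l+1,\Omega})\,\tn(\bfv_h,q_h)\tnast$.

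The hard part is this continuity estimate, precisely because the stability results of Propositions~\ref{p3:prop:a_h-stabprop} and~\ref{p3:prop:b_h-stabprop} are stated for discrete arguments, whereas here the first slot carries the non-discrete interpolation error; I would therefore bound $A_h$ term by term. The volume and Nitsche parts of $a_h$, and both $b_h$ contributions, follow from Cauchy--Schwarz on $\OmcupOm$ and on $\Gamma$ together with the trace inequality~\eqref{p3:eq:trace-inequality-for-FD}, yielding bounds of the form $\tn\bfu-\pi_h\bfu\tn\,\tn\bfv_h\tnast$ and analogous pressure factors controlled by~\eqref{p3:eq:interpest-v}--\eqref{p3:eq:interpest-p} and~\eqref{p3:eq:interpest1}. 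The genuinely delicate point is the overlap stabilization $s_h$ from~\eqref{p3:eq:sh-olm}: after Cauchy--Schwarz its test factor $\|\nabla(\bfv_{h,1}-\bfv_{h,2})\|_{\OmegaO}$ is controlled by $\tn\bfv_h\tnast$, and the remaining factor is $\|\nabla\big((\bfu-\pi_h\bfu)_1-(\bfu-\pi_h\bfu)_2\big)\|_{\OmegaO}$. Here I would use that the exact solution is single-valued and globally smooth on $\Oast_1\subseteq\Omega$, so that on the overlap \emph{each} component error $\nabla(\bfu-\pi_{h,i}\bfu)$ is a true interpolation error of order $h^{k}$; their difference is then $O(h^k)$ as well, which is what rescues the rate. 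Finally, for $S_h$ (see~\eqref{p3:eq:Sh-olm}) I would split by Cauchy--Schwarz, control the test factor $\big(\sum_T h_T^2\|-\alpha\Delta\bfv_h+\beta\nabla q_h\|_T^2\big)^{1/2}$ by $\tn(\bfv_h,q_h)\tnast$ using the inverse estimate~\eqref{p3:eq:inverse-estimate-delta-grad}, and bound the consistency factor $\big(\sum_T h_T^2\|-\Delta(\bfu-\pi_h\bfu)+\nabla(p-\pi_h p)\|_T^2\big)^{1/2}$ by the higher-order interpolation estimates~\eqref{p3:eq:interpest0} (with $r=2$ for the velocity and $r=1$ for the pressure); this is exactly where the regularity $\bfu\in[H^{k+1}]^d$, $p\in H^{l+1}$ is consumed and produces the rates $h^{k}$ and $h^{l+1}$. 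Collecting all terms gives the desired continuity bound; inserting it into the inf-sup inequality yields $\tn(\bfe_h,\epsilon_h)\tnast\leqslant C(h^k|\bfu|_{k+1,\Omega}+h^{l+1}|p|_{l+1,\Omega})$, and the triangle inequality from the first step then delivers~\eqref{p3:eq:a-priori}.
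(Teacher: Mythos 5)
Your proposal is correct and follows essentially the same route as the paper's own proof: triangle inequality around the interpolant $\pi_h$, the inf-sup stability of Theorem~\ref{p3:thm:stability} combined with the Galerkin orthogonality of Proposition~\ref{p3:prop:galerkin-ortho}, and then a term-by-term continuity bound on $A_h(\bfu-\pi_h\bfu,\,p-\pi_h p;\,\bfv_h,q_h)$ using the interpolation, trace, and inverse estimates, with $S_h$ handled exactly as you describe. The only difference is presentational: the paper absorbs $s_h$ into the continuity bound \eqref{p3:eq:a_h-stable-2} for $a_h+s_h$, whereas you argue the overlap term separately, which makes the same estimate slightly more explicit.
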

\begin{proof}
  By applying the triangle inequality, we have
  \begin{align*}
    \tn (\bfu  - \bfu_h, p - p_h ) \tn
    &\leqslant
    \tn (\bfu  - \pi_h \bfu, p - \pi_h p ) \tn
    +
    \tn (\pi_h \bfu  - \bfu_h, \pi_h p - p_h ) \tnast.
  \end{align*}
  Given the interpolation estimate~\eqref{p3:eq:interpest-vp}, it is
  suffices to bound the discretization error $(\pi_h \bfu -
  \bfu_h,\pi_h p - p)$.  By Theorem~\ref{p3:thm:stability}, there
  exists a $(\bfv_h, p_h)$ such that $\tn (\bfv_h, p_h) \tnast \leqslant 1$
  and
  \begin{align}
    c \tn ( \pi_h \bfu  - \bfu_h , \pi_h p - p_h ) \tnast
    &\leqslant A_h(\bfu_h - \pi_h \bfu,p - \pi_h p; \bfv_h, q_h).
    \nonumber
  \end{align}
  Thus, the Galerkin orthogonality~\eqref{p3:eq:galerkin-ortho}
  and the definition of $A_h$ give
  \begin{align*}
    c \tn ( \pi_h \bfu  - \bfu_h , \pi_h p - p_h ) \tnast
    &\leqslant A_h(\bfu - \pi_h \bfu, p - \pi_h p; \bfv_h, q_h)
    \\
    &= \underbrace{a_h(\bfu - \pi_h \bfu, \bfv_h) + s_h(\bfu -
      \pi_h \bfu ,
    \bfv_h)}_{I}
    \\
    &\quad+
    \underbrace{b_h(\bfv_h, p - \pi_h p)}_{II}
    +
    \underbrace{b_h(\bfu - \pi_h \bfu,  q_h)}_{III}
    \\
    &\quad \underbrace{- S_h(\bfu - \pi_h \bfu;p - \pi_h
    p,\bfv_h,q_h)}_{IV}.
  \end{align*}
  To estimate the first term, we use \eqref{p3:eq:a_h-stable-2}
  and the interpolation estimate~\eqref{p3:eq:interpest-v}
  to obtain
  \begin{align*}
    |I| &\leqslant C \tn \bfu - \pi_h \bfu \tnast \tn \bfv_h \tnast
    \leqslant C h^{k} | \bfu |_{k+1, \Omega} \tn \bfv_h \tnast
    \leqslant C h^{k} | \bfu |_{k+1, \Omega}.
  \end{align*}
  Recalling that $\meanvalue{p} = p_{2}$, the second term $II$
  can be estimated by
  \begin{align*}
    | II |
    &=
    |(\nabla \cdot \bfv, p - \pi_h p)_{\OmcupOm} +
    (\bfn \cdot\jump{\bfv}, \meanvalue{p - \pi_h p})_{\Gamma}|
    \\
    &\leqslant
    \tn \bfv \tn \, \tn p - \pi_h p\tn
    + \ifnorm{\jump{\bfv}}{1/2}
    \ifnorm{p_{2} - \pi_{h,2} p_{2} }{-1/2}
    \\
    &\leqslant
    \tn \bfv \tn \, \tn p - \pi_h p\tn +
    C \ifnorm{\jump{\bfv}}{1/2}
    (\| p_{2} - \pi_{h,2} p_2 \|_{\Omega_2}
    + h\| \nabla (p_{2} - \pi_{h,2} p_2) \|_{\Omega_2})
    \\
    &\leqslant C
    \tn \bfv \tn h^{l+1} | p |_{l+1,\Omega}
    \leqslant C h^{l+1} | p |_{l+1,\Omega},
  \end{align*}
  where we used the trace inequality~\eqref{p3:eq:trace-inequality} in
  the penultimate step and
  finally~\eqref{p3:eq:inverse-estimates-for-triangles} in combination
  with the interpolation estimate~\eqref{p3:eq:interpest-p}. The third
  term can be treated similarly to obtain
  \begin{align*}
    | III |
    &\leqslant C h^{k+1} | \bfu |_{k+1,\Omega} \tn q_h \tnast
    \leqslant C h^{k+1} | \bfu |_{k+1,\Omega}.
  \end{align*}
  For the last term $IV$, several applications of Cauchy--Schwarz yield
  \begin{align*}
    |IV| &=
    \delta
    |\sum_{T \in \mcupm} h_T^2
    (-\Delta(\bfu - \pi_h \bfu), + \nabla ( p - \pi_h p),
    \pm\Delta \bfv_h + \nabla q_h)_T |
    \\
    &\leqslant
    C
    \bigl(
    \sum_{T \in \mcupm}
    h_T^2 \| \Delta(\bfu - \pi_h \bfu) \|_{\mcupm}^2
    +
    h_T^2 \| \nabla(p - \pi_h p) \|_{\mcupm}^2
    \bigr)^{1/2}
    \\
    &\phantom{\leqslant}\;\times
    \big(
    \sum_{T \in \mcupm}
    h_T^2 \| \Delta\bfv_h \|_{\mcupm}^2
    +
    h_T^2 \| \nabla q_h  \|_{\mcupm}^2
    \big)^{1/2}
  \end{align*}
  After applying the inverse
  inequalities~\eqref{p3:eq:inverse-estimate-delta-grad}
  and~\eqref{p3:eq:inverse-estimate-grad-l2} to the second
  term, the interpolation estimate~\eqref{p3:eq:interpest0-cutmes}
  implies
  \begin{align*}
    | IV |
    &\leqslant C ( h^{k} | \bfu |_{k+1,\Omega} + h^{l+1} | p |_{l+1,\Omega})
    \tn (\bfv_h, q_h ) \tn
    \leqslant C ( h^{k} | \bfu |_{k+1,\Omega} + h^{l+1} | p |_{l+1,\Omega}).
  \end{align*}
  Summing up, we obtain that
  \begin{align*}
    \tn (\bfu -\bfu_h,p - p_h) \tn
    &\leqslant
    \tn (\bfu -\pi_h \bfu,p - \pi_h p ) \tn +
    |I| + |II| + |III| + |IV|
    \\
    &\leqslant C ( h^{k} | \bfu |_{k+1,\Omega} + h^{l+1} | p
    |_{l,\Omega}),
  \end{align*}
  which concludes the proof.
  \qed
\end{proof}

\section{Condition number estimate}
\label{p3:sec:condition-number}

To conclude the analysis of the discretization presented in
Section~\ref{p3:sec:stokes-olm-method}, we here show that the
condition number of the associated stiffness matrix is uniformly
bounded by $C h^{-2}$ independently of the location of the overlapping
mesh $\mesh_2$. The proof of the condition number estimate follows the
approach of~\citet{Ern2006}.

Let $(\bfv_h, q_h) = \sum_{i=1}^N V_i \varphi_i$ where $\{ \varphi_i
\}_{i=1}^N$ is a basis for the element space $V_h \times Q_h$; $V =
\{V_i\}_i$ thus denotes the expansion coefficients of $(\bfv_h, q_h)$
in the given basis. Similarly, let $W$ label the expansion
coefficients of fields denoted $(\bfw_h, r_h)$. Denote the inner
product in $\R^N$ by $(V, W)_N = \sum_{i=1}^N V_i W_i$ and the
corresponding norm by $|V|_N^2 = (V,V)_N$. The stiffness matrix $\mcA$
associated with the form~\eqref{p3:eq:Ah-olm} is then defined as:
\begin{equation}
  \label{p3:eq:def:mcA}
  (\mcA V, W)_N = A_h( \bfv_h, q_h; \bfw_h, r_h)
  \quad \foralls \bfv_h,\bfw_h \in V_h, \foralls q_h,r_h \in Q_h .
\end{equation}

Since essential boundary conditions are applied for the velocity on
the whole of $\partial \Omega$, the discrete pressure is only
determined up to a constant, and so the matrix $\mcA$ has one zero
eigenvalue. Throughout the remainder of this section, we therefore
instead interpret $\mcA$ as $\mcA : \R^{N}/ \Kern(\mcA) \rightarrow
\Kern(\mcA)^\bot$, which is a bijective linear mapping by
definition. The condition number of the matrix $\mcA$ is then defined
by
\begin{equation}\label{p3:eq:defkappa}
  \kappa(\mcA) = | \mcA |_N  | \mcA^{-1} |_N,
\end{equation}
with
\begin{equation}
  | \mcA |_N = \sup_{x \in \widehat{\mathbf{R}}^N \setminus
  \boldsymbol{0}} \frac{|\mcA x|_N}{| x |_N }.
\end{equation}

We now state some estimates that will be needed in the derivation of
the estimate of the condition number. First, for a conforming,
quasi-uniform mesh $\mesh$ with mesh size $h$ and a finite element
space $\mcV_h$ defined on $\mesh$, it is well known that there are
positive constants $c_{M}$ and $C_{M}$ only depending on the
quasi-uniformness parameters and the polynomial order of $\mcV_h$ such
that the following equivalence holds:
\begin{align}
  \label{p3:eq:VRineq}
  c_{M} h^{d/2} | V |_N \leqslant \| v_h \| \leqslant C_{M} h^{d/2} | V |_N  \quad
  \foralls v_h \in \mcV_{h}.
\end{align}
A second ingredient is an inverse estimate and a Poincar\'e inequality
in the appropriate norms which we state in the following two lemmas.
\begin{lemma}
  There is a constant $C_I$ such that
  \begin{alignat}{2}
    \label{p3:eq:L2energyinv}
    \tn \bfv_h \tnast &\leqslant C_I h^{-1} \| \bfv_h \|_{\mcupm}, &\quad
    &\foralls \bfv_h \in V_h, \\
    \label{p3:eq:L2energyinv-2}
    \tn (\bfv_h,q_h) \tnast &\leqslant C_I h^{-1} \| (\bfv_h,q_h)
    \|_{\mcupm} &\quad &\foralls (\bfv_h,q_h)
    \in V_h \times Q_h.
  \end{alignat}
\end{lemma}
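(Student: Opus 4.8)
The plan is to prove \eqref{p3:eq:L2energyinv} by bounding separately each of the three contributions to $\tn \bfv_h \tnast^2$ in \eqref{p3:eq:triple-v-normast} by $C h^{-2} \| \bfv_h \|_{\mcupm}^2$, and then to deduce \eqref{p3:eq:L2energyinv-2} by exploiting that the pressure part of the $\tn \cdot \tnast$ norm carries no derivatives. Throughout I would use quasi-uniformity to replace the local mesh size $h_T$ by the global $h$ (so that $h_T^{-1} \leqslant C h^{-1}$), and I would recall that, by our choice $\meanvalue{v} = v_2$, the average term $\meanvalue{\nablan \bfv_h}$ equals $\nablan \bfv_{h,2}$, which lives entirely on $\mesh_2$.

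First I would dispose of the gradient term $\| \nabla \bfv_h \|_{\mcupm}^2 = \sum_{T \in \mcupm} \| \nabla \bfv_h \|_T^2$: the element inverse estimate \eqref{p3:eq:inverse-estimates-for-triangles} gives $\| \nabla \bfv_h \|_T \leqslant C h_T^{-1} \| \bfv_h \|_T$, and summing with quasi-uniformity yields $\| \nabla \bfv_h \|_{\mcupm}^2 \leqslant C h^{-2} \| \bfv_h \|_{\mcupm}^2$. For the normal-derivative term $\ifnorm{\meanvalue{\nablan \bfv_h}}{-1/2}^2 = \sum_{T \in \mesh_2} h_T \| \nablan \bfv_{h,2} \|_{0,\Gamma \cap T}^2$, the trace inverse estimate \eqref{p3:eq:inverse-estimate-for-fd-traces} applied on $\mesh_2 = \meshast_2$ gives $\| \nablan \bfv_{h,2} \|_{\Gamma \cap T}^2 \leqslant C h_F^{-1} \| \nabla \bfv_{h,2} \|_T^2$; since $h_F \sim h_T$ by shape-regularity, the weight $h_T$ cancels and a further application of \eqref{p3:eq:inverse-estimates-for-triangles} bounds this term by $C h^{-2} \| \bfv_{h,2} \|_{\mesh_2}^2 \leqslant C h^{-2} \| \bfv_h \|_{\mcupm}^2$.

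The main obstacle is the jump term $\ifnorm{\jump{\bfv_h}}{1/2}^2 = \sum_{T \in \mesh_2} h_T^{-1} \| \bfv_{h,2} - \bfv_{h,1} \|_{0,\Gamma \cap T}^2$, because here both the weights and the summation index come from $\mesh_2$, whereas the contribution $\bfv_{h,1}$ is piecewise polynomial on the \emph{other} mesh $\meshast_1$. After the splitting $\| \jump{\bfv_h} \|^2 \leqslant 2(\| \bfv_{h,1} \|^2 + \| \bfv_{h,2} \|^2)$, the $\bfv_{h,2}$ part is handled as above via the trace inequality \eqref{p3:eq:trace-inequality-for-FD} on $\mesh_2$. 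For the $\bfv_{h,1}$ part I would decompose each interface piece as $\Gamma \cap T = \bigcup_{T' \in \meshast_1} (\Gamma \cap T \cap T')$ and apply \eqref{p3:eq:trace-inequality-for-FD} on the element $T' \in \meshast_1$, obtaining $\| \bfv_{h,1} \|_{\Gamma \cap T \cap T'}^2 \leqslant C (h_{T'}^{-1} \| \bfv_{h,1} \|_{T'}^2 + h_{T'} \| \nabla \bfv_{h,1} \|_{T'}^2)$. The delicate point is the reassembly of the resulting double sum over pairs $(T, T')$: the mesh-size compatibility \eqref{p3:eq:meshsize-combatibility} makes $h_T \sim h_{T'}$ whenever $T \cap T' \cap \Gamma \neq \emptyset$, and shape-regularity together with this compatibility ensures that each $T' \in \meshast_1$ meets only a bounded number of $T \in \mesh_2$ along $\Gamma$, so that every $T'$ is counted $O(1)$ times. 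Combined with the inverse estimate applied to the gradient term, this gives $\sum_{T \in \mesh_2} h_T^{-1} \| \bfv_{h,1} \|_{0,\Gamma \cap T}^2 \leqslant C h^{-2} \| \bfv_{h,1} \|_{\meshast_1}^2$.

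Adding the three bounds establishes \eqref{p3:eq:L2energyinv}. For \eqref{p3:eq:L2energyinv-2} I would note that by \eqref{p3:eq:triple-p-normast} the pressure contribution to $\tn (\bfv_h, q_h) \tnast$ is simply $\| q_h \|_{\mcupm}$, which is trivially dominated by $h^{-1} \| q_h \|_{\mcupm}$ for $h \leqslant 1$; combining this with \eqref{p3:eq:L2energyinv} for the velocity part and the definition of the product norms yields $\tn (\bfv_h, q_h) \tnast \leqslant C_I h^{-1} \| (\bfv_h, q_h) \|_{\mcupm}$.
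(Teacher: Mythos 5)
Your proof is correct and takes essentially the same route as the paper: each of the three contributions to $\tn \cdot \tnast$ is bounded via the inverse estimates \eqref{p3:eq:inverse-estimates-for-triangles}, \eqref{p3:eq:inverse-estimate-for-fd-traces} and the trace inequality \eqref{p3:eq:trace-inequality-for-FD}, and the pressure part of \eqref{p3:eq:L2energyinv-2} is absorbed because $1 \leqslant C h^{-1}$ (the paper uses $1 \leqslant C h^{-1}\diam(\Omega)$ rather than assuming $h \leqslant 1$, but the constant absorbs this). Your handling of the jump term — splitting off the $\bfv_{h,1}$ contribution, decomposing $\Gamma \cap T$ over elements $T' \in \meshast_1$, and using the mesh-size compatibility \eqref{p3:eq:meshsize-combatibility} with a finite-overlap counting argument — is simply a more explicit version of the step the paper dispatches in one line.
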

\begin{proof}
  By definition
  \begin{align*}
    \tn \bfv_h \tnast^2 = \| \nabla \bfv_h \|_{\mcupm}^2
    + \ifnorm{\langle \nablan \bfv_h \rangle}{-1/2}^2
    + \ifnorm{\jump{\bfv_h}}{1/2}^2 .
  \end{align*}
  Applying the standard inverse
  inequality~\eqref{p3:eq:inverse-estimates-for-triangles} on each
  mesh $\meshast_1$ and $\mesh_2$ separately, the first term is
  bounded by $C h^{-2}\| \bfv_h \|_{\mcupm}^2$.  A similar bound can
  be derived for the second term
  by~\eqref{p3:eq:inverse-estimate-for-facets}
  and~\eqref{p3:eq:inverse-estimates-for-triangles}.  The estimate for
  the remaining term follows in the similar manner by a combination
  of~\eqref{p3:eq:trace-inequality-for-FD} and
  again~\eqref{p3:eq:inverse-estimates-for-triangles}.

  The second estimate \eqref{p3:eq:L2energyinv-2} is an immediate
  consequence of the first and the fact that $1 = C \leqslant
  h^{-1}\diam(\Omega)$.
  \qed
\end{proof}
\begin{lemma} There is a constant $C$ such that
  \begin{align}
    \label{p3:eq:L2energy}
    \| \bfv \|_{\mcupm} &\leqslant C_P \tn \bfv \tnast,  \quad
    \foralls \bfv \in V_h.
  \end{align}
\end{lemma}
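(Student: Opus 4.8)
The plan is to reduce the left-hand side, which is an $L^2$ norm over the \emph{tessellated} region $\Oast_1$ (through $\meshast_1$) together with $\Omega_2$, to an $L^2$ norm over the \emph{physical} domain $\OmcupOm$ plus a broken gradient, and then to absorb everything into $\tn \bfv \tnast$ by means of a discrete Poincaré--Friedrichs inequality. First I would split $\| \bfv \|_{\mcupm}^2 = \| \bfv_{h,1} \|_{\meshast_1}^2 + \| \bfv_{h,2} \|_{\mesh_2}^2$ using~\eqref{p3:eq:broken}. Since $\mesh_2$ tessellates exactly $\Omega_2$, the second term equals $\| \bfv_{h,2} \|_{\Omega_2}^2$. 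To the first term I would apply Proposition~\ref{p3:prop:l2-norm-on-olm}, namely~\eqref{p3:eq:poincare-for-meshast-2}, giving $\| \bfv_{h,1} \|_{\meshast_1}^2 \leqslant C ( \| \bfv_{h,1} \|_{\Omega_1}^2 + \| \nabla \bfv_{h,1} \|_{\meshast_1}^2 )$. As $\| \nabla \bfv_{h,1} \|_{\meshast_1}^2 \leqslant \| \nabla \bfv \|_{\mcupm}^2 \leqslant \tn \bfv \tnast^2$ by~\eqref{p3:eq:triple-v-normast}, it remains to bound the physical-domain norm $\| \bfv \|_{\OmcupOm}^2 = \| \bfv_{h,1} \|_{\Omega_1}^2 + \| \bfv_{h,2} \|_{\Omega_2}^2$ by $\tn \bfv \tnast^2$.

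The key ingredient is then a broken Poincaré--Friedrichs inequality: for every $\bfv$ lying in the broken space $[H^1(\Omega_1)]^d \oplus [H^1(\Omega_2)]^d$ and vanishing on $\partial \Omega$,
\[
  \| \bfv \|_{\OmcupOm}^2 \leqslant C \bigl( \| \nabla \bfv \|_{\OmcupOm}^2 + \| \jump{\bfv} \|_{0,\Gamma}^2 \bigr).
\]
This is exactly adapted to our setting: since each $\bfv_{h,i}$ is a \emph{continuous} finite element field on $\meshast_i$, the only discontinuity of $\bfv_h$ over $\Omega$ is the jump $\jump{\bfv_h}$ across the single interface $\Gamma$, and the essential boundary condition imposed on all of $\partial\Omega$ (see Section~\ref{p3:sec:condition-number}) supplies the vanishing trace. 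The space $V_h$ is contained in this broken space, so the inequality applies verbatim to $\bfv_h$.

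I would establish the broken inequality by a compactness (contradiction) argument carried out at the continuous level, which has the advantage of producing a constant depending only on $\Omega_1$ and $\Omega_2$ and hence \emph{independent of the mesh and of the location of $\Gamma$}. Assuming the inequality fails, there is a sequence $\bfv^n$ with unit $L^2(\OmcupOm)$ norm while $\| \nabla \bfv^n \|_{\OmcupOm}^2 + \| \jump{\bfv^n} \|_{0,\Gamma}^2 \to 0$. The sequence is bounded in $[H^1(\Omega_1)]^d \oplus [H^1(\Omega_2)]^d$, so by Rellich's theorem a subsequence converges strongly in $L^2$ and weakly in the broken $H^1$ space to a limit $\bfv$. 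The vanishing of the gradient forces $\bfv$ to be piecewise constant; the vanishing of the jump (traces being weakly continuous) forces continuity across $\Gamma$, hence $\bfv$ is globally constant on the connected union $\OmcupOm$; and the weak continuity of the trace on $\partial\Omega$ forces this constant to be zero, contradicting $\| \bfv \|_{\OmcupOm} = 1$. I expect this compactness step, in particular the careful passage to the limit of the interface and boundary traces (and the handling of any disconnected components of $\Omega_i$ or $\Gamma$), to be the main obstacle. Finally, to match the interface term of $\tn \cdot \tnast$ I would note that the unweighted jump is controlled by its $h^{-1}$-weighted counterpart: by~\eqref{p3:eq:alpha-norm} with $\alpha = 1/2$, $\| \jump{\bfv} \|_{0,\Gamma}^2 = \sum_{T \in \mesh_2} \| \jump{\bfv} \|_{0,\Gamma \cap T}^2 \leqslant (\max_T h_T)\, \ifnorm{\jump{\bfv}}{1/2}^2 \leqslant C\, \ifnorm{\jump{\bfv}}{1/2}^2$, the last step using that the mesh sizes are uniformly bounded. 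Combining this with the reduction of the first paragraph yields $\| \bfv \|_{\mcupm}^2 \leqslant C \tn \bfv \tnast^2$, which is the claim with $C_P = C^{1/2}$.
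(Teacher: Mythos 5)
Your proposal is correct for the lemma as literally stated, and its first step---splitting $\| \bfv \|_{\mcupm}^2$ and invoking Proposition~\ref{p3:prop:l2-norm-on-olm} to reduce the problem to bounding $\| \bfv \|_{\OmcupOm}$ by $\tn \bfv \tnast$---is exactly the paper's first step. Where you genuinely diverge is the treatment of $\| \bfv \|_{\OmcupOm}$: you use a broken Poincar\'e--Friedrichs inequality established by a compactness (contradiction) argument, while the paper uses a duality argument: solve $-\Delta \bfphi = \bfv$ with $\bfphi = 0$ on $\partial\Omega$, integrate by parts over $\Omega_1$ and $\Omega_2$, and estimate the resulting interface term $(\jump{\bfv}, \bfn \cdot \nabla\bfphi)_{\Gamma}$ by the trace inequality~\eqref{p3:eq:trace-inequality-for-FD} together with the elliptic regularity bound $\| \bfphi \|_{2,\Omega} \leqslant C \| \bfv \|_{\Omega}$. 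Both routes prove the stated inequality, and your conversion of the unweighted jump into $\ifnorm{\jump{\bfv}}{1/2}$ is fine (the paper's duality proof produces the $h^{-1}$-weighted jump directly, so it never needs this step).

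The substantive difference is what the constant depends on, and here your argument contains a claim that is not justified: you assert the compactness constant is ``independent of the mesh and of the location of $\Gamma$,'' but compactness for a \emph{fixed} decomposition yields a constant depending on the pair $(\Omega_1, \Omega_2)$---and $\Gamma$ is determined by that pair, so moving the overlapping domain changes the constant in a way the argument does not control. This matters because the lemma feeds into Theorem~\ref{p3:thm:conditionnumber}, whose entire purpose (see the abstract, and the experiment in Section~\ref{p3:ssec:condition-tests} where $\Omega_2 = [l, 1-l]^3$ is moved toward a degenerate position) is a condition number bound \emph{uniform} in the location of the overlapping mesh. The paper's duality argument delivers that uniformity automatically: its constant involves only the elliptic regularity constant of the fixed domain $\Omega$ and shape-regularity constants entering the trace inequality, never the geometry of $\Gamma$. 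To recover the same from your approach you would need a compactness argument over sequences of decompositions with uniform Lipschitz character, which is considerably more delicate and is not sketched. On the other hand, your route buys something the paper's does not: it avoids the $H^2$ regularity of the dual Poisson problem, which the paper assumes implicitly and which holds for convex or $C^{1,1}$ domains but not for general Lipschitz $\Omega$.
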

\begin{proof}
  By Proposition~\ref{p3:prop:l2-norm-on-olm}, we have
  \begin{align*}
    \| \bfv \|_{\mcupm}^2
    &=
    \| \bfv \|_{\meshast_1}^2 +
    \| \bfv \|_{\mesh_2}^2
    \leqslant
    \| \bfv \|_{\Omega_1}^2 + C( \| \nabla \bfv \|^2_{\meshast_1}
    + \| \bfv \|_{\Omega_2}^2)
    \\
    &\leqslant C( \tn \bfv \tnast^2 + \| \bfv \|_{\OmcupOm}^2) ,
  \end{align*}
  for all $\bfv \in V_h$. Hence, it remains to derive a bound for a
  $\| \bfv \|_{\OmcupOm}$.

  We use a duality argument to estimate $\| \bfv \|_{\OmcupOm}$.  Let
  $\bfphi \in H^2(\Omega)$ be the solution of the dual problem
  $-\Delta \bfphi = \bfv$ with boundary conditions $\bfphi = 0$ on
  $\partial \Omega$.  Multiplying with $\bfv_h$ and integrating by
  parts we get by using the Cauchy-Schwarz inequality and the trace
  inequality \eqref{p3:eq:trace-inequality-for-FD} for $\bfphi$:
  \begin{align*}
    \|\bfv\|^2_{\OmcupOm}
    &=
    \sum_{T \in \mesh_1} (\nabla \bfv, \nabla \bfphi)_{T \cap
    \Omega_1}
    + \sum_{T \in \mesh_2} (\nabla \bfv, \nabla \bfphi)_{T}
    -  ([\bfv], \bfn\cdot \nabla \bfphi)_{\Gamma}
    \\
    &\leqslant \left(
    h^{-1} \|\jump{\bfv}\|_{\Gamma}^2 + \sum_{T \in \mcupm} \| \nabla \bfv \|^2_T
    \right)^{\frac{1}{2}}
    \\
    &\qquad \times \left( \sum_{T \in \mcupm} \| \nabla \bfphi \|_T^2
    + \sum_{T \in \mesh_{0,\Gamma}}  C (\| \nabla \bfphi
    \|_T^2 + h^2 |\nabla \bfphi |_{1,T}^2 ) \right)^{\frac{1}{2}}
    \\
    &\leqslant \tn \bfv \tnast \, \| \bfphi \|_{2, \Omega}.
  \end{align*}
  Finally, using standard elliptic regularity~\citep{BrennerScott2008}
  $\|\bfphi \|_{2, \Omega} \leqslant \| \bfv \|_{\Omega}$ and division by
  $\|\bfv\|_{\Omega}$, the estimate follows.
  \qed
\end{proof}

\begin{theorem}
  \label{p3:thm:conditionnumber}
  The condition number of the stiffness matrix satisfies the estimate
  \begin{equation}
    \kappa(\mcA) \leqslant C h^{-2}.
    \label{p3:eq:conditionnumber}
  \end{equation}
\end{theorem}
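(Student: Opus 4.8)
The plan is to bound the two factors in $\kappa(\mcA) = |\mcA|_N\,|\mcA^{-1}|_N$ separately, proving $|\mcA|_N \leqslant C h^{d-2}$ and $|\mcA^{-1}|_N \leqslant C h^{-d}$, and then multiply. The decisive structural point throughout is that all relevant norms ($\tn \cdot \tnast$ and $\|\cdot\|_{\mcupm}$) are measured over the \emph{full, uncut} elements of the extended meshes $\meshast_1$ and $\mesh_2$. Consequently the mass-matrix equivalence~\eqref{p3:eq:VRineq}, applied separately on each of the two quasi-uniform meshes and summed over the direct sum $V_h \times Q_h$, yields
\begin{equation*}
  \| (\bfv_h,q_h) \|_{\mcupm} \sim h^{d/2} | V |_N
\end{equation*}
with constants that do not depend on how $\Gamma$ cuts through the background mesh.

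For the upper bound on $|\mcA|_N$, I would first record the continuity of $A_h$ in the $\tn \cdot \tnast$ norm: combining the continuity of $a_h + s_h$ from~\eqref{p3:eq:a_h-stable-2}, the continuity of $b_h$ from~\eqref{p3:eq:b-cont-wrt-norm-2}, and a Cauchy--Schwarz estimate of $S_h$ via the inverse inequalities~\eqref{p3:eq:inverse-estimate-delta-grad} and~\eqref{p3:eq:inverse-estimate-grad-l2}, one gets $A_h(\bfv_h,q_h;\bfw_h,r_h) \leqslant C\,\tn (\bfv_h,q_h) \tnast\,\tn (\bfw_h,r_h) \tnast$. Inserting this into the definition~\eqref{p3:eq:def:mcA}, then applying the inverse estimate~\eqref{p3:eq:L2energyinv-2} and the norm equivalence~\eqref{p3:eq:VRineq} to each argument converts the energy norms into $|V|_N$ and $|W|_N$, each at the cost of a factor $h^{-1}\cdot h^{d/2}$:
\begin{equation*}
  (\mcA V, W)_N \leqslant C\,h^{-2}\,h^{d}\,| V |_N\,| W |_N .
\end{equation*}
Taking the supremum over $W$ then gives $|\mcA|_N \leqslant C h^{d-2}$.

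For $|\mcA^{-1}|_N$ I would work on $\Kern(\mcA)^\bot$ and establish a lower bound $|\mcA V|_N \geqslant c\,h^{d}|V|_N$. Given $(\bfv_h,q_h)$, the inf-sup condition of Theorem~\ref{p3:thm:stability} supplies a test pair $(\bfw_h,r_h)$ with $A_h(\bfv_h,q_h;\bfw_h,r_h) \geqslant c\,\tn (\bfv_h,q_h) \tnast\,\tn (\bfw_h,r_h) \tnast$. Using $(\mcA V, W)_N = A_h(\bfv_h,q_h;\bfw_h,r_h) \leqslant |\mcA V|_N\,|W|_N$, I would lower $\tn (\bfw_h,r_h) \tnast$ to $\|(\bfw_h,r_h)\|_{\mcupm}$ by the Poincar\'e inequality~\eqref{p3:eq:L2energy} for the velocity part (the pressure part being norm-equal by the very definition~\eqref{p3:eq:triple-p-normast}), and then to $|W|_N$ via~\eqref{p3:eq:VRineq}; dividing by $|W|_N$ and applying the same chain to $\tn (\bfv_h,q_h) \tnast$ yields $|\mcA V|_N \geqslant c\,h^{d}|V|_N$. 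Hence $|\mcA^{-1}|_N \leqslant c^{-1}h^{-d}$, and multiplying the two bounds produces $\kappa(\mcA)\leqslant C h^{-2}$.

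I expect the main obstacle to be not any individual inequality but the bookkeeping that guarantees every constant is genuinely independent of the interface location. This is precisely what the preceding machinery is designed to furnish: the Poincar\'e inequality~\eqref{p3:eq:L2energy} (built on Proposition~\ref{p3:prop:l2-norm-on-olm}) controls the mass of $\bfv_h$ carried by the sliver elements of $\mesh_{0,\Gamma}$ in terms of $\tn \bfv_h \tnast$ \emph{without} reference to $|T\cap\Omega_1|$, and the equivalence~\eqref{p3:eq:VRineq} is invoked only on the uncut meshes, so no small-cut degeneracy can enter. One must likewise verify that the inf-sup constant from Theorem~\ref{p3:thm:stability} and the continuity constant are themselves interface-independent; they are, since both are established entirely in the $\tn \cdot \tnast$ norm over $\mcupm$, which again reflects the twice-counted contributions in the overlap region $\OmegaO$.
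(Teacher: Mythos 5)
Your proposal is correct and follows essentially the same route as the paper's own proof (after \citet{Ern2006}): bound $|\mcA|_N$ via continuity of $A_h$ in the $\tn \cdot \tnast$ norm, the inverse estimate~\eqref{p3:eq:L2energyinv-2} and the norm equivalence~\eqref{p3:eq:VRineq}, and bound $|\mcA^{-1}|_N$ via the inf-sup condition of Theorem~\ref{p3:thm:stability}, the Poincar\'e inequality~\eqref{p3:eq:L2energy} and~\eqref{p3:eq:VRineq} again. Your bookkeeping is in fact slightly more careful than the paper's displayed chain: you retain the factor $h^{d-2}$ in the bound for $|\mcA|_N$ (which the paper's estimate~\eqref{p3:condA} drops), and this factor is precisely what makes the product $|\mcA|_N\,|\mcA^{-1}|_N \leqslant C\,h^{d-2}\cdot h^{-d} = C h^{-2}$ come out independent of the spatial dimension $d$.
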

\begin{proof}
  Recalling the definition of the condition number
  in~\eqref{p3:eq:defkappa}, the proof consists of deriving estimates
  of $|\mcA|_N$ and $|\mcA^{-1}|_N$.  In the following, we use the
  following well-known equivalence for the Euclidean norm $|x|_N$:
  \begin{equation*}
    |V|_N = \sup_{W\neq \boldsymbol{0}} \frac{(V,W)_N}{|W|_N}.
  \end{equation*}
  \paragraph{Estimate of $|\mcA|_N$.}
  We have by the definition of $\mcA$
  \begin{align}
    \label{p3:eq:cond11}
    | \mcA V|_N &= \sup_{W \neq \boldsymbol{0}} \frac{(\mcA V,W)_N}{|W|_N}
    \\
    \label{p3:eq:cond12}
    &=\sup_{W \neq \boldsymbol{0}} \dfrac{A_h(\bfv_h,q_h; \bfw_h,r_h)}{|W|_N}
    \\
    \label{p3:eq:cond13}
    &\leqslant C_A \sup_{W \neq \boldsymbol{0}} \dfrac{\tn ( \bfv_h,q_h) \tnast
    \cdot \tn(\bfw_h,r_h)\tnast}{|W|_N}
    \\
    \label{p3:eq:cond14}
    &\leqslant C_A C_I^2 \sup_{W \neq \boldsymbol{0}}  \dfrac{h^{-1} \| (\bfv_h,q_h)
    \|_{\mcupm}
    \cdot h^{-1} \|(\bfw_h,r_h)\|_{\mcupm}}{|W|_N}
    \\
    \label{p3:eq:cond15}
    &\leqslant C_A C_I^{2} C_M^2 | V |_N.
  \end{align}
  Here, inequality \eqref{p3:eq:cond13} follows by the continuity of
  $A_h$ with respect to the norm $\tn \cdot
  \tnast$,~\eqref{p3:eq:cond14} follows by using the inverse estimate
  \eqref{p3:eq:L2energyinv-2} twice, and finally~\eqref{p3:eq:cond15}
  results by applying~\eqref{p3:eq:VRineq}. Dividing by $| V |_N$ and
  taking the supremum over all $V \not = \mathbf{0}$, we conclude that
  \begin{align}\label{p3:condA}
    | \mcA |_N \leqslant C_A C_I^{2} C_M^2 .
  \end{align}
  \paragraph{Estimate of $|\mcA^{-1}|_N$.}
  Starting with~\eqref{p3:eq:VRineq} and sequentially using the
  Poincar\'e inequality~\eqref{p3:eq:L2energy} (extended to the
  product space), the inf-sup stability~\eqref{p3:eq:stability} of
  $A_h$ and finally the Poincar\'e inequality again, we arrive at the
  following chain of estimates:
  \begin{align*}
    h^{d/2} |V|_N
    &\leqslant c_M^{-1} \| (\bfv_h,q_h) \|_{\mcupm}
    \leqslant c_M^{-1} C_P \tn (\bfv_h,q_h) \tnast \\
    &\leqslant c_M^{-1} C_P c_A^{-1} \frac{A(\bfv_h,q_h;\bfw_h,r_h)}
      {\tn ( \bfw_h, r_h) \tnast}
      = c_M^{-1} C_P c_A^{-1} \frac{(\mcA V, W)_N}{\tn ( \bfw_h, r_h)
    \tnast}
    \\
    &\leqslant c_M^{-1} C_P c_A^{-1}|\mcA V |_N \, \frac{|W|_N}{\tn (
    \bfw_h, r_h) \tnast}
    \\
    &\leqslant c_M^{-2} C_P c_A^{-1}|\mcA V |_N \, h^{-d/2}
    \frac{\|(\bfw_h, r_h)\|_{\mcupm}}{\tn ( \bfw_h, r_h) \tnast}
    \\
    &\leqslant c_M^{-2} C_P^2 c_A^{-1}|\mcA V |_N \,h^{-d/2}
  \end{align*}
  Setting $V = \mcA^{-1} W$ we have $W=\mcA V$ and the inequality now reads
  $|\mcA^{-1} W |_N \leqslant  c_M^{-2} C_P^2 c_A^{-1} h^{-d} | W |_N$
  for all $W \not = 0$, or
  in other words
  \begin{align}\label{p3:condB}
    |\mcA^{-1} |_N \leqslant c_M^{-2} C_P^2 c_A^{-1} h^{-d}.
  \end{align}
  Finally, combining~\eqref{p3:condA} and~\eqref{p3:condB}
  yields~\eqref{p3:eq:conditionnumber}.
  \qed
\end{proof}

\section{Numerical examples}
\label{p3:sec:num-examples}

We conclude this paper with three numerical tests; all in three
spatial dimensions. The numerical experiments were carried out using
the \rm{DOLFIN-OLM} library
(\url{http://launchpad.net/dolfin-olm}). We first corroborate the
theoretical \emph{a~priori} error estimate with a convergence
experiment. Second, we numerically examine how the location of the
overlapping mesh in relation to the background mesh affects the
condition number. Finally, we demonstrate how the method presented and
the features provided by \rm{DOLFIN-OLM} can be applied to the flow
around an airfoil in a three dimensional channel. The experiments were
performed with $V_h = V_h^1$, $Q_h = Q_h^1$ and $\beta = 1$.

\subsection{Convergence test}
\label{p3:ssec:convergence-test}
We let $\Omega = [0, 1]^3$ and choose the overlapping domain
$\Omega_1$ to be a rotation along the $y$-axis of the domain
$\widetilde{\Omega}_1 = [0.3331, 0.6669]^3$ as illustrated
in~Figure~\ref{p3:fig:convergence:overlapping-meshes}.  To examine the
convergence of the methods, we apply the method of manufactured
solutions: let $\bfu(x,y,z) = (\sin(\pi y) \sin(\pi z), 0, 0)$ and
$p(x,y,z) = \cos(\pi x) + 1$. The right-hand side $\bff$ is defined
accordingly so that~\eqref{p3:eq:strong-stress} is satisfied, and the
corresponding Dirichlet boundary conditions are strongly imposed on
the entire boundary $\partial \Omega$.  The numerical approximation
corresponding to~\eqref{p3:eq:stokes-olm} is then computed on a
sequence of overlapping uniform meshes
$\{(\mesh_0^N,\mesh_2^N)\}_{N=0}^{5}$.  The mesh size of the initial
meshes $\mesh_0^0$ and $\mesh_2^0$ is $h_{\max} \approx 1/3$ and each
subsequent pair $(\mesh_0^N,\mesh_2^N)$ is generated from the previous
one by uniformly refining each mesh.
\begin{figure}
  \begin{center}
    \includegraphics[width=0.59\textwidth]{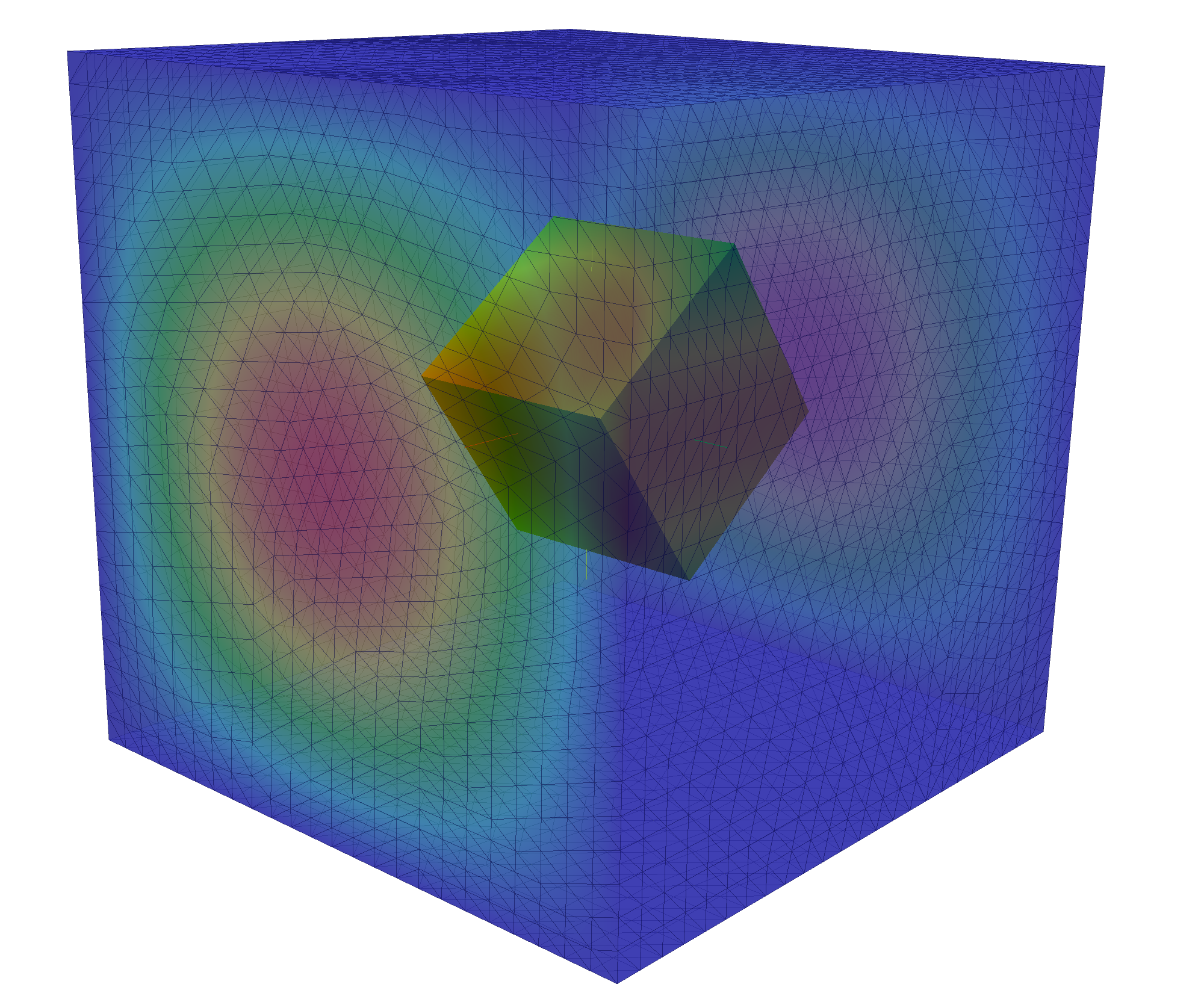}
    \caption{Mesh configuration and magnitude of velocity
      approximation corresponding to this mesh resolution for the
      numerical convergence test.}
    \label{p3:fig:convergence:overlapping-meshes}
  \end{center}
\end{figure}

The stabilization parameters were set to $\gamma=10$ and $\delta =
0.05$.  To solve the linear system of equations, we apply a
transpose-free quasi-minimal residual solver with an algebraic
multigrid preconditioner, filtering out the constant pressure mode in
the iterative solver. The solves converged to a tolerance of $10^{-8}$
in between $34$ and $47$ iterations.
\begin{figure}
  \begin{center}
    \includegraphics[width=0.49\textwidth]{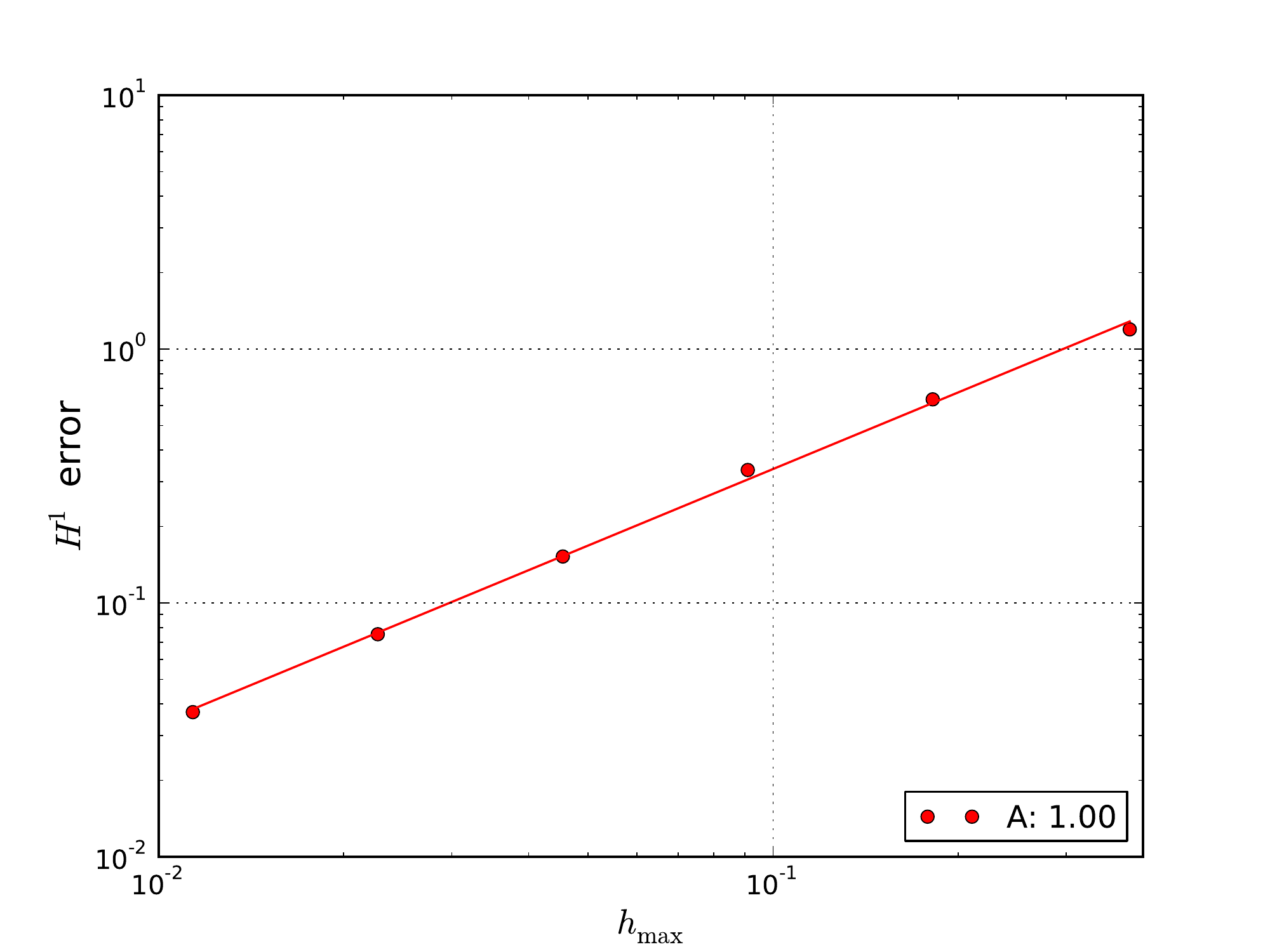} 
    \includegraphics[width=0.49\textwidth]{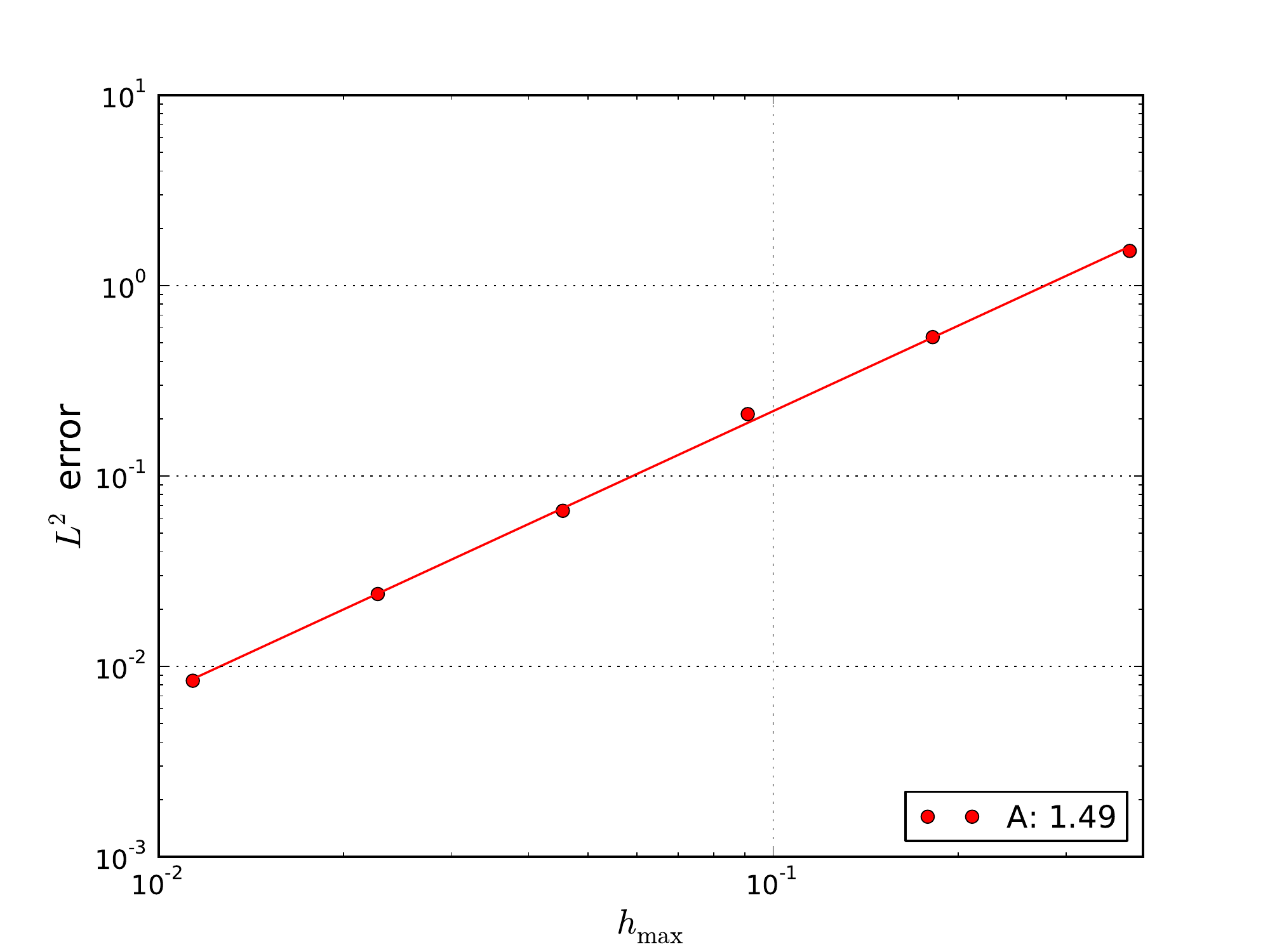}
    \caption{Convergence of the stabilized Nitsche overlapping mesh
      method for $V_h^1\times Q_h^1$. The legend gives the fitted
      slope for each configuration. Top: $H^1$-error $||\bfu -
      \bfu_h||_{1, \mcupm}$ for the velocity versus maximal element
      diameter $h_{\max}$. Bottom: $L^2$-error $||p - p_h||_{\mcupm}$
      for the pressure versus $h_{\max}$.}
    \label{p3:fig:convergence:p1xp1-olm}
  \end{center}
\end{figure}

The resulting errors for the sequence of refined meshes are given in
Figure~\ref{p3:fig:convergence:p1xp1-olm}.
Theorem~\ref{p3:thm:a-priori} predicts first order convergence for
both the $H^1$-norm of the velocity error and the $L^2$-norm of the
pressure error. This is also (at least) observed in the numerical
results, thus corroborating the theoretical estimate. We additionally
observe that the pressure approximation seems to converge at a higher
order ($\approx 1.5$) for the range of meshes investigated here.

\subsection{Condition number tests}
\label{p3:ssec:condition-tests}

The next numerical example demonstrates that the condition number of
the matrix $\mcA$ defined by~\eqref{p3:eq:def:mcA} is bounded
independently of the position of the overlapping mesh relative to the
background mesh. This bound can be attributed to the
term~\eqref{p3:eq:sh-olm} defined on the overlap region: we also
illustrate that the condition number seems unbounded if this term is
not included.

Let $\Omega = [0, 1]^3$ be tessellated by $N^3$ cubes, each cube
divided into six tetrahedra. Define the overlapping domain by
$\Omega_2 = \Omega_2(l) = [l, 1-l]^3$ for a parameter $l \in (0, 1)$,
and tessellate $\Omega_2$ in the same manner as $\Omega$, but with
$M^3$ cubes. We will consider two cases of mesh sizes: (i) $N = 5, M =
3$, and (ii) $N = 10, M = 6$. For both cases, $l$ approaching $0.2$
from above corresponds to a limiting case: a few of the degrees of
freedom in the overlapped mesh will only receive a contribution from
the integrals defined over the overlap region while the cell and
interface contributions vanish. Here, we therefore examine $l \in
(0.2, 0.21]$. For each $l$ and choice of $(N, M)$, we compute the
condition number $\kappa$ of the corresponding matrix $\mcA$ as the
ratio of the absolute values of the largest (in modulus) eigenvalue
and the smallest (in modulus), nonzero, eigenvalue.

Let $\delta = 0.05$ and $\gamma = 10.0$ as before. The resulting
condition numbers, scaled by the square mesh size of the overlapping
mesh, are given in Table~\ref{p3:tab:condition}. We observe that for
each $l$, the difference in the condition number between the two mesh
sizes is small, as expected in view of
Theorem~\ref{p3:thm:conditionnumber}. Moreover, for both the case $N =
5$ and $N = 10$, the scaled condition numbers seem clearly bounded as
$l \rightarrow 0.2$. In contrast, the scaled condition number grows
significantly as $l \rightarrow 0.2$ when the overlap integrals $s_h$
are not included.
\begin{table}[ht]
  \centering
  \begin{tabular}{l|rrrrr}
    \toprule
    Case &  $l = 0.21$ &  $0.201$ &  $0.2001$ &  $0.20001$ & $0.200001$  \\
    \midrule
    $N = 5$  ($h \approx 0.33$) & $1076$ & $1207$ & $1220$ & $1222$ & $1222$ \\
    $N = 10$ ($h \approx 0.17$) & $955$ & $1149$& $1170$ & $1173$ & $1174$ \\
    \midrule
    $N = 5$ without $s_h$ & $583$ & $643$ & $958$ & $9715$ & $110636$ \\
    \bottomrule
  \end{tabular}
  \caption{Scaled condition numbers $\kappa h^2$ where $h$ is the
    minimal cell diameter of the overlapping mesh. Each column to
    corresponds one overlapping domain $\Omega(l)$ for $l = 0.21$, $l =
    0.201$ etc. The bottom row corresponds to the matrix induced by the
    form $A_h$ \emph{without} the overlap integrals $s_h$.}
    \label{p3:tab:condition}
\end{table}

\subsection{Flow around an airfoil in a channel}
\label{p3:ssec:illust-example}

Finally, we illustrate that the method and technology developed here
can be successfully applied to more realistic numerical
simulations. In particular, we consider the flow around an airfoil in
a channel.

As $\mesh_{2}$ we take a tetrahedral mesh approximation of a sphere
surrounding an airfoil, discretizing the boundary layer around the
airfoil with a higher mesh resolution than the remainder of the
domain. As $\mesh_0$ we take a tessellation of $[-3, 3]^3 \backslash
O$ where the interior domain $O$ is contained in the convex hull of
$\mesh_{2}$. (In total, $\mesh_0 \cup \mesh_2$ is a mesh of a $[-3,
3]^3$ box with an airfoil removed.)

The stabilization parameters and choice of finite element spaces are
as before: $V_h^1 \times Q_h^1$, $\beta = 1$, $\delta = 0.05$ and
$\gamma = 10$. We enforce a parabolic velocity profile at the inflow
boundary, no slip conditions for the velocity on the outer walls and
the airfoil boundary, and stress-free boundary conditions at the
outflow boundary and take $\bff = \mathbf{0}$.

The flow patterns around the airfoil can now be studied for instance
for different angles of attack by rotating $\mesh_2$ while keeping
$\mesh_0$ fixed. The velocity and pressure approximations for a series
of mesh pairs, in which $\mesh_2$ was first rotated around the
$z$-axis with angle $\theta$ and then around the $y$-axis with the
same angle, are visualized in Figures~\ref{p3:fig:fun-experiment}
and~\ref{p3:fig:fun-experiment-streamlines}. We especially note the
smooth transition of the solution from $\mesh_0$ to $\mesh_2$; the
interface is not visible.
\begin{figure}
  \begin{center}
    \includegraphics[width=0.48\textwidth]{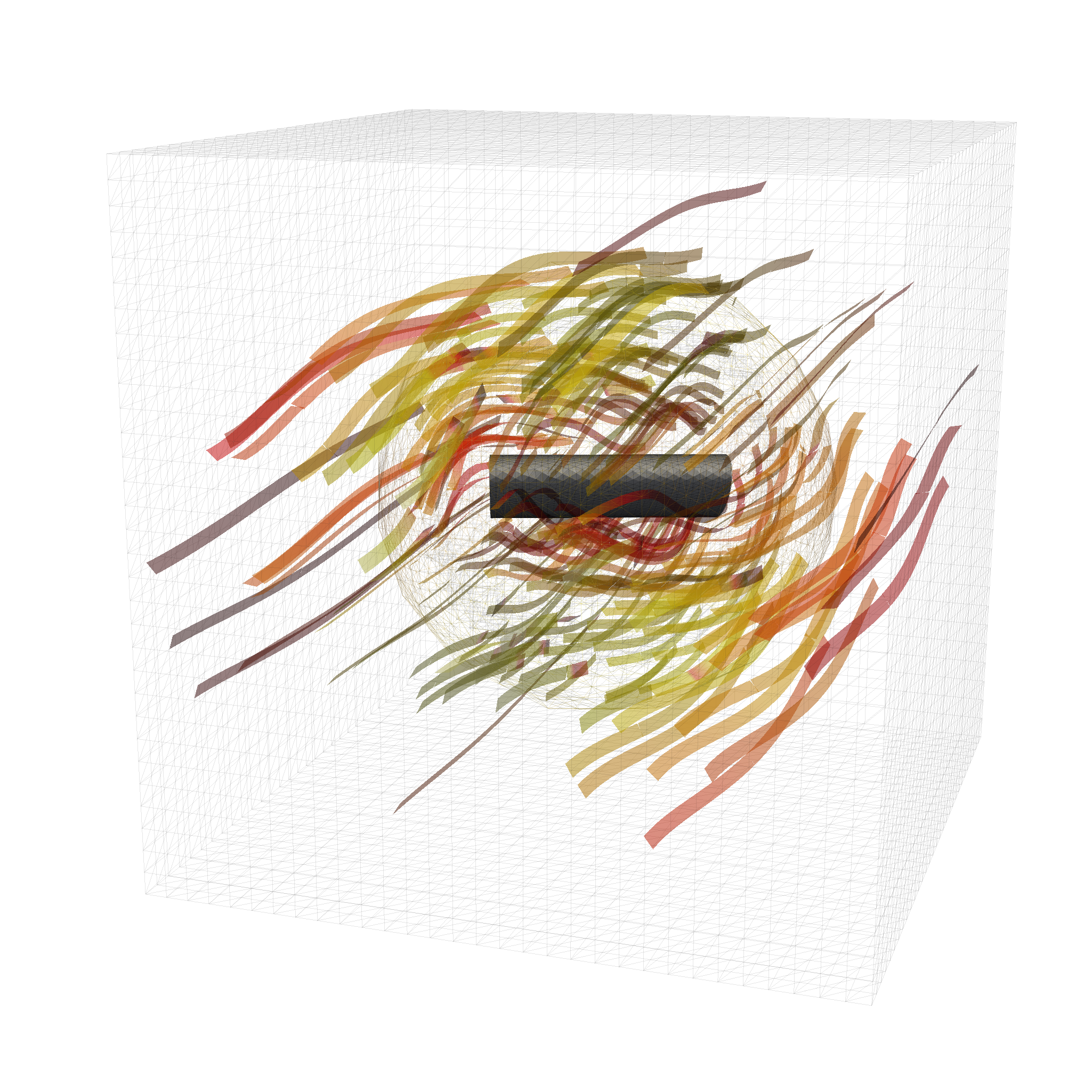}
    \includegraphics[width=0.48\textwidth]{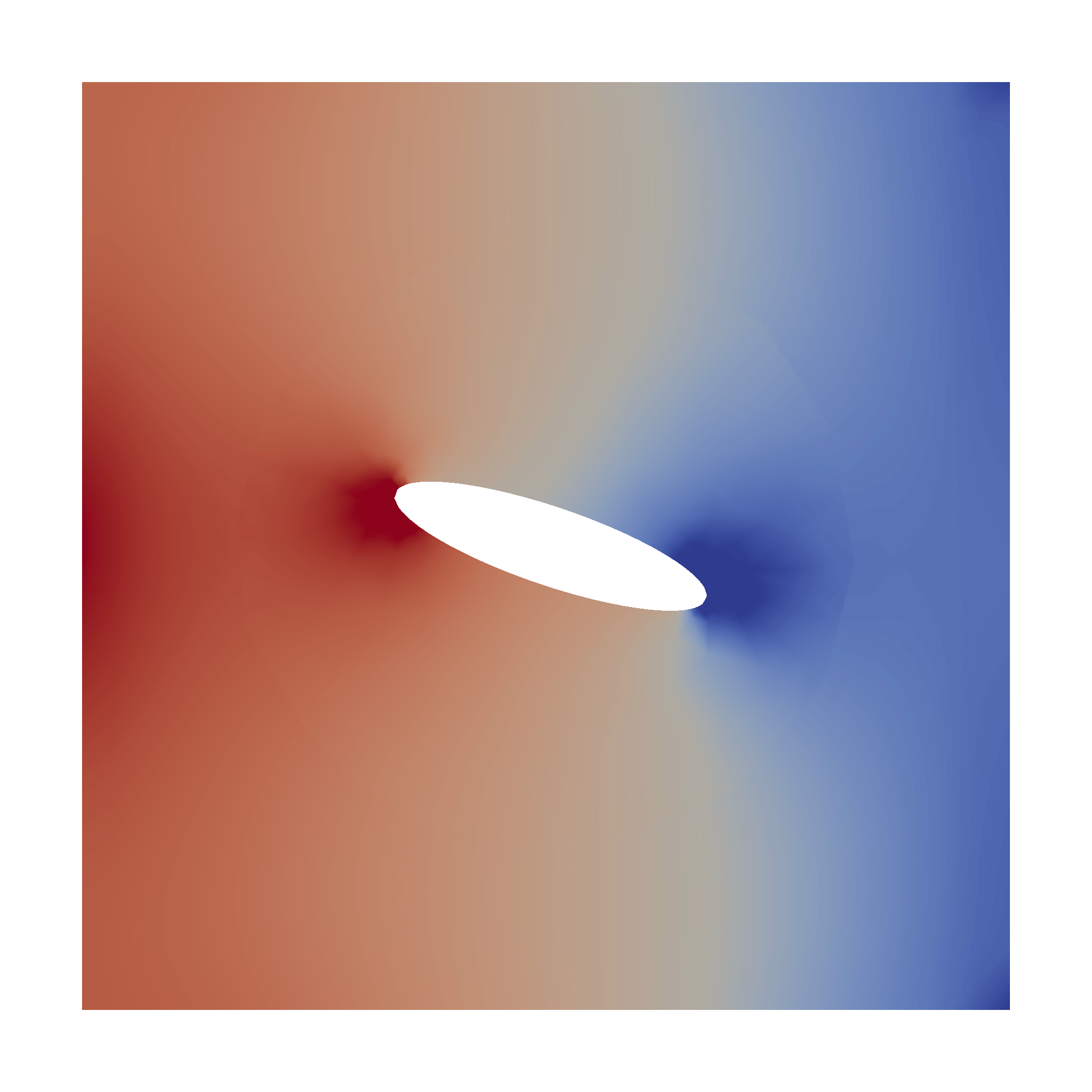}
    \\
    \vspace{-2em}
    \includegraphics[width=0.48\textwidth]{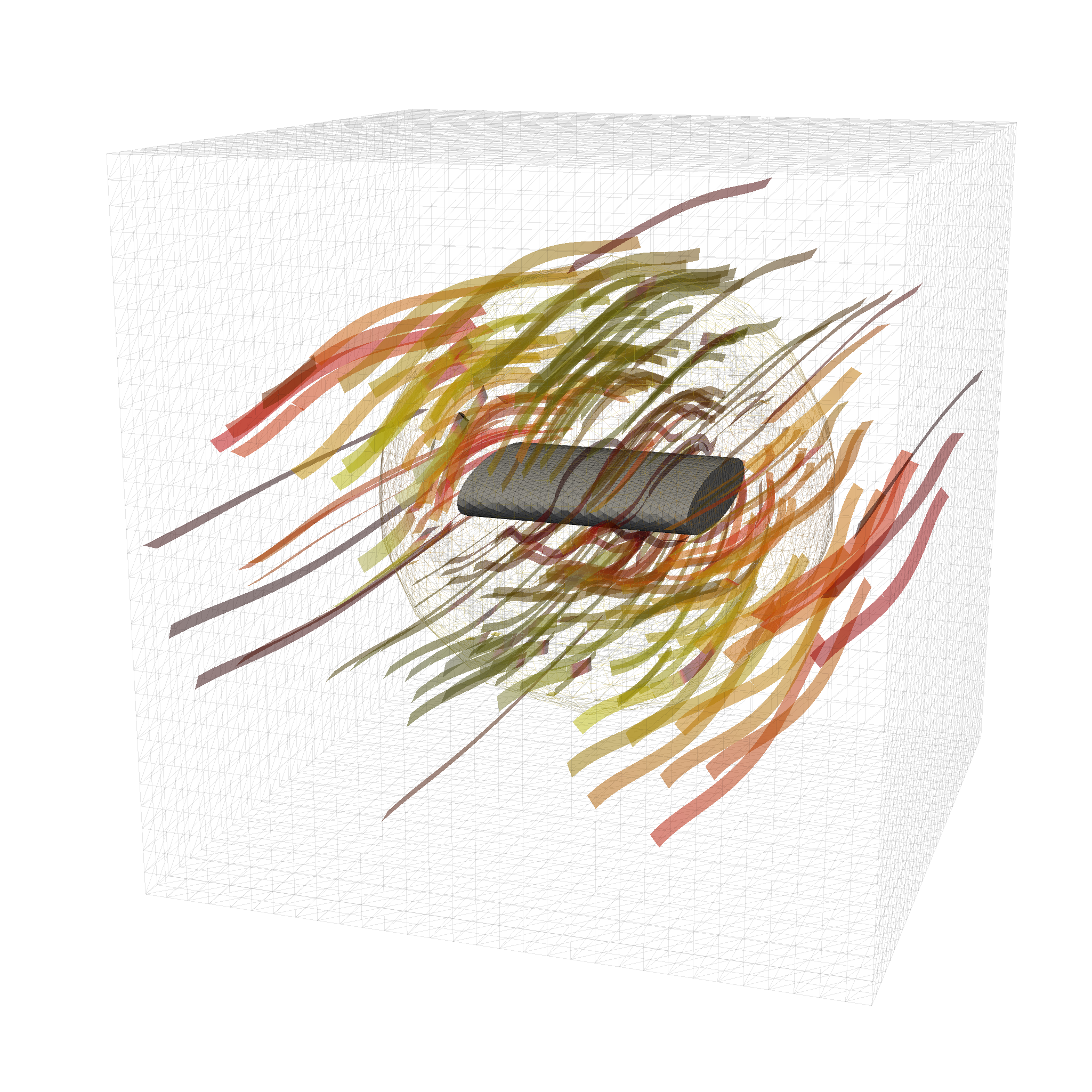}
    \includegraphics[width=0.48\textwidth]{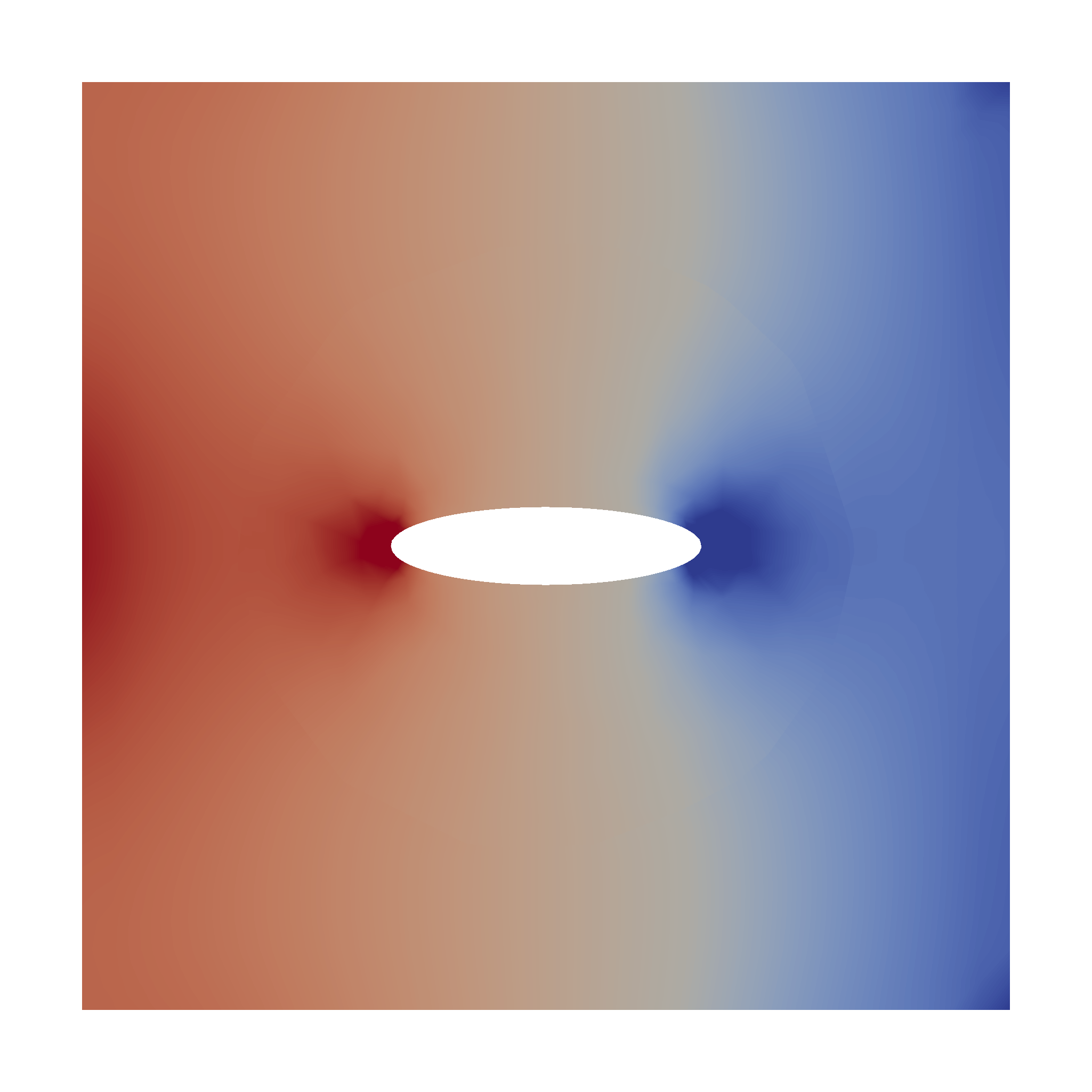}
    \\
    \vspace{-2em}
    \includegraphics[width=0.48\textwidth]{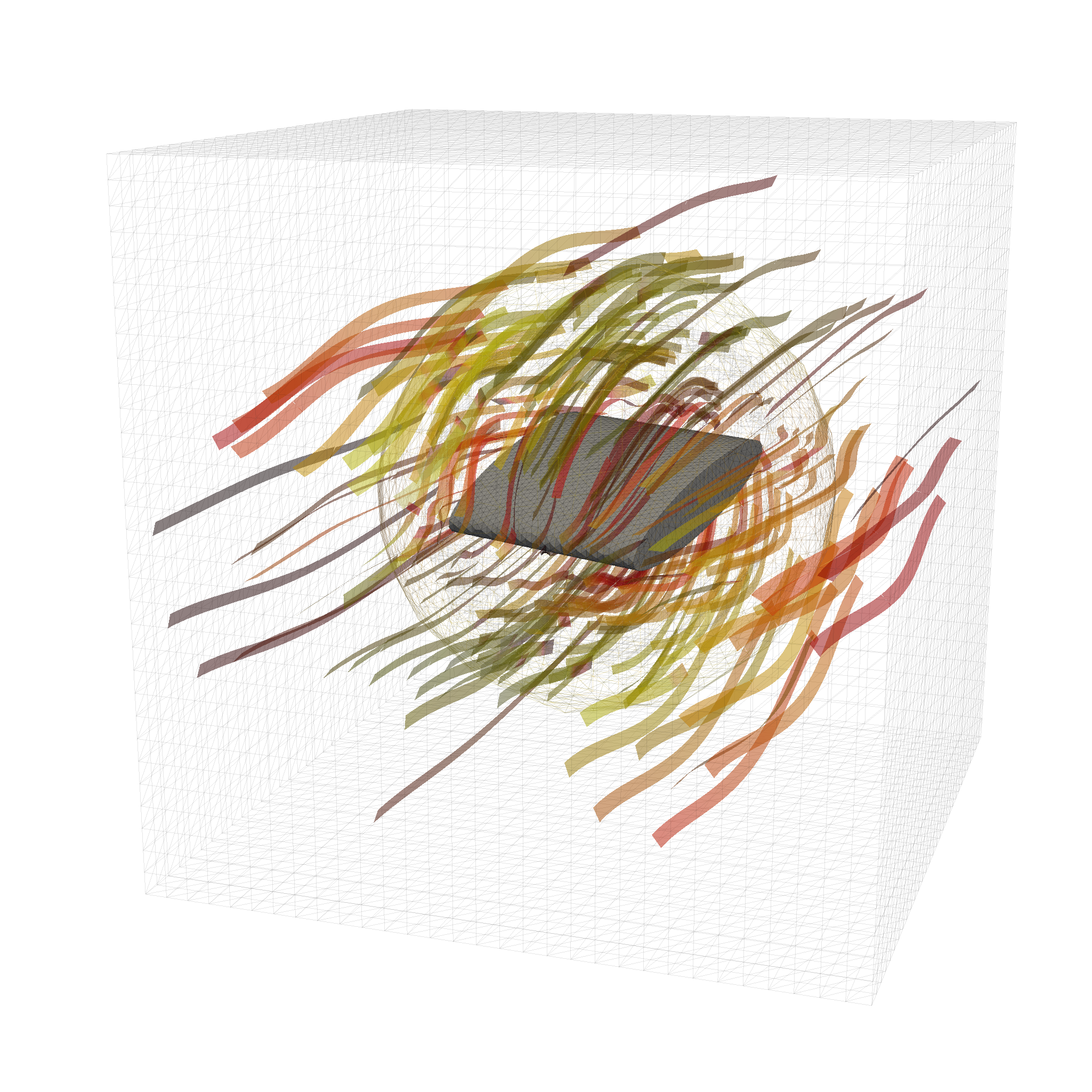}
    \includegraphics[width=0.48\textwidth]{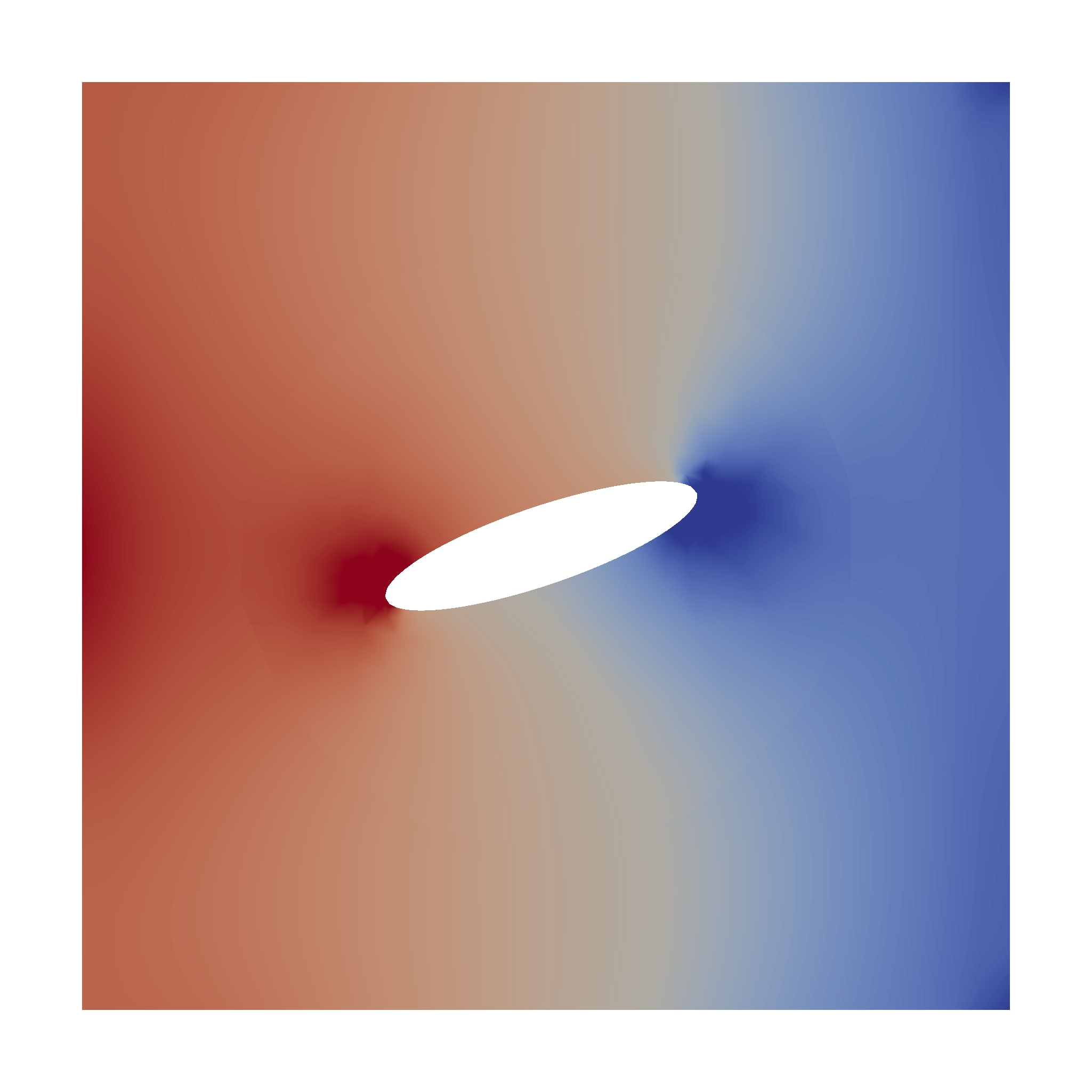}
    \\
    \vspace{-2em}
  \end{center}
  \caption{Velocity streamlines (left) and pressures in the plane
    defined by the $z$-axis (right) for different angles of attack
    $\theta$: from top to bottom: $\theta = 20^{\circ}, 0^{\circ},
    -20^{\circ}$.}
  \label{p3:fig:fun-experiment}
\end{figure}

\begin{figure}
  \begin{center}
    \includegraphics[width=0.74\textwidth]{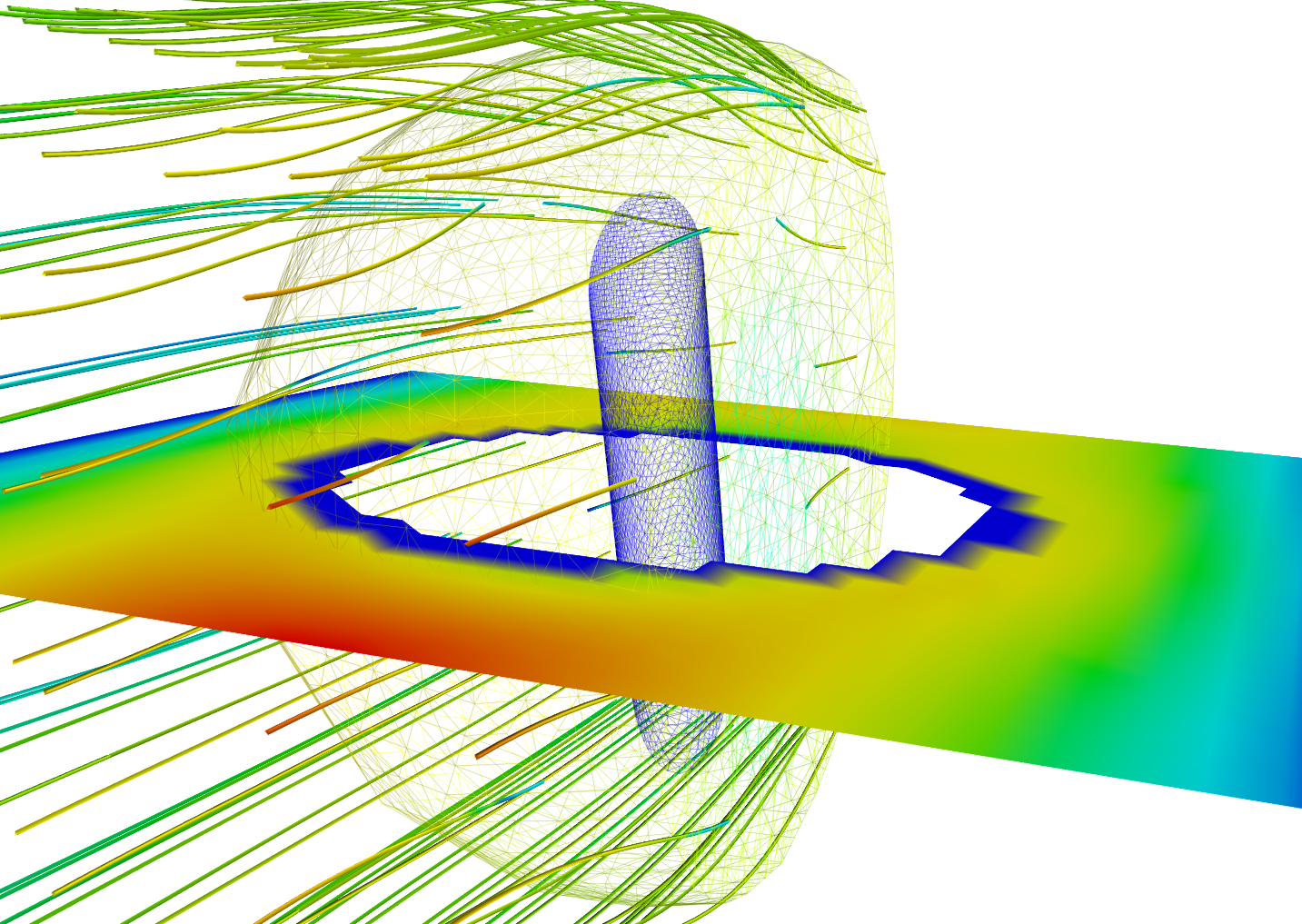}
    \\
    \vspace{0.5cm}
    \includegraphics[width=0.74\textwidth]{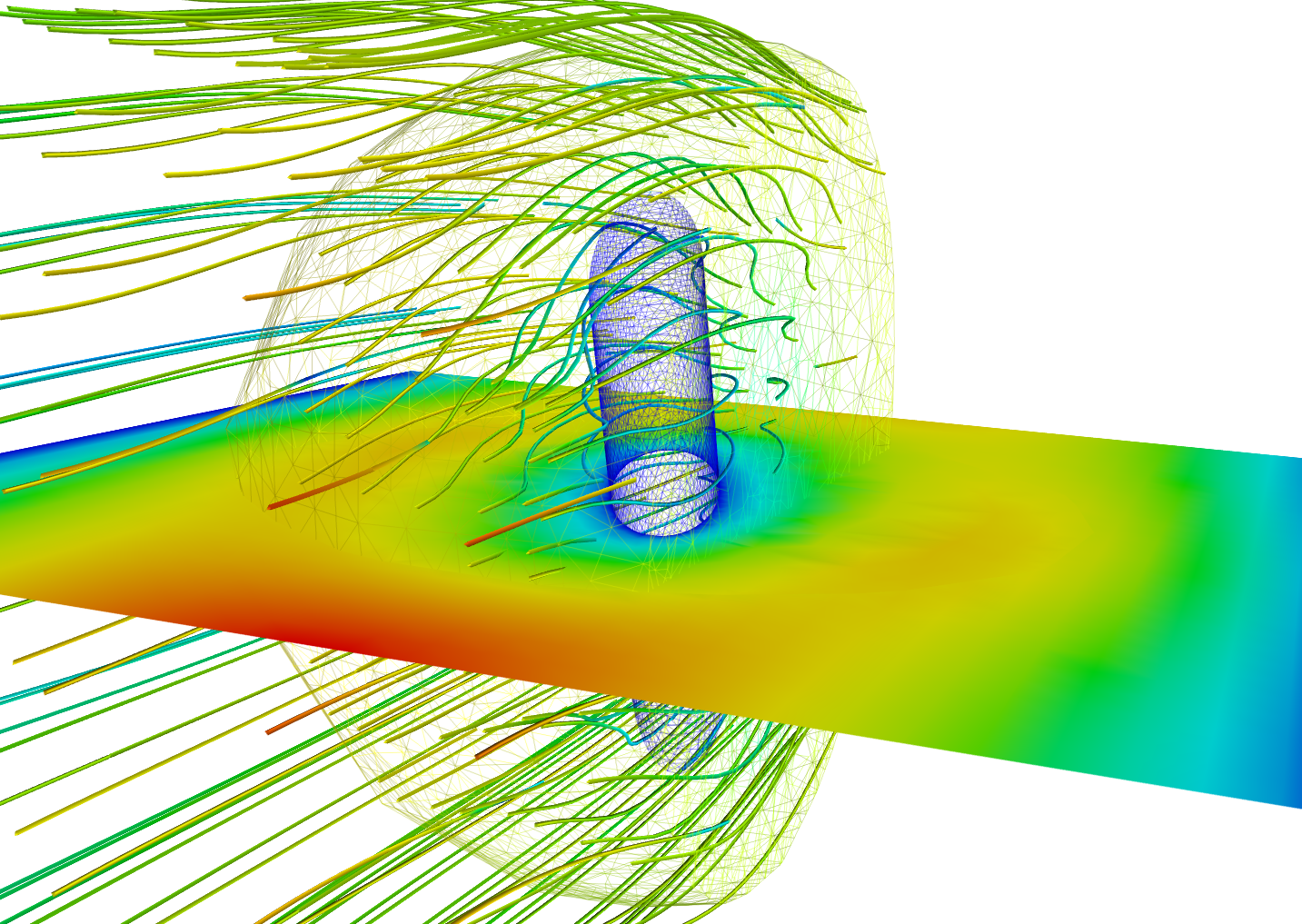}
    \caption{Velocity streamlines and magnitude in a cross-section for
      angle of attack $\theta = -20^{\circ}$. Top: The solution on the
      background mesh $\meshast_1$. It can clearly be seen that the
      streamlines stop close to the interface zone. Bottom: The
      solution on the overlapping mesh $\mesh_2$ superimposed on the
      solution on the background mesh $\meshast_1$, indicating a
      smooth transition of the solution from $\meshast_1$ to
      $\mesh_2$.}
    \label{p3:fig:fun-experiment-streamlines}
  \end{center}
\end{figure}

\section{Conclusions}

We have presented and analyzed a general class of stabilized finite
element methods for the Stokes problem posed on overlapping,
non-matching meshes. The interface conditions are enforced using
Nitsche's method, yielding a provably optimally convergent method. In
addition, a least-squares term on the overlap region is included to
control the condition number of the stiffness matrix.

The theoretical results have been verified numerically for a test
problem consisting of Stokes flow through a channel described by a
sphere superimposed on a tetrahedral mesh of the unit cube. We have
further verified that the condition number of the stiffness matrix
remains bounded, independently of the location of the overlapping mesh
relative to the background mesh. Finally, we have demonstrated the
applicability of the proposed method and our implementation to the
simulation of Stokes flow around a three-dimensional airfoil with a
fitted mesh superimposed on a non-matching background mesh.

A natural extension of the work presented in this paper is to the
simulation of fluid--structure interaction problems where a fluid mesh
of the surroundings of an elastic body is superimposed on a background
fluid mesh as in Figure~\ref{p3:fig:stokes_overlapping}. The
fluid--structure interaction may then be handled via a standard
arbitrary Lagrangian--Eulerian (ALE) approach on the overlapping mesh,
while the Nitsche overlapping mesh method analyzed in this paper is
used to enforce the interface conditions across the fluid--fluid
boundary. We explore this technique further in ongoing work.

While all software used in this work is available as free/open-source,
additional work is required to create interfaces and documentation
that make the software useful to a general audience. This issue will
be addressed in the near future, with the goal to provide an
easy-to-use programming environment for overlapping mesh methods as
part of the FEniCS software suite.

\begin{acknowledgements}
This work is supported by an Outstanding Young Investigator grant from
the Research Council of Norway, NFR 180450. This work is also
supported by a Center of Excellence grant from the Research Council of
Norway to the Center for Biomedical Computing at Simula Research
Laboratory.
\end{acknowledgements}
\clearpage

\bibliographystyle{plainnat}
\bibliography{bibliography}

\end{document}